\documentclass[11pt,twoside]{amsart}
\usepackage{amssymb,amscd,color}
\usepackage{amsmath}
\usepackage{tikz}
\usetikzlibrary{matrix}
\usepackage{tikz-cd}
\usetikzlibrary{calc, intersections, through}
\usepgflibrary{arrows}
\usetikzlibrary{positioning}
\usepackage{fancyhdr}
\usepackage{array}
\usepackage[all]{xy}
\usepackage{pdflscape}
\usepackage{afterpage}
\usepackage{capt-of}
\usepackage{booktabs}
\usepackage{enumitem}
\usepackage{lipsum}
\usepackage{graphicx,caption, subcaption}
\usepackage{bbm}
\usepackage[symbol]{footmisc}
\usepackage{xcolor}
\usepackage[square,numbers]{natbib}
\usetikzlibrary{arrows.meta}
\usepackage{comment}
\usepackage{hyperref}

\numberwithin{equation}{section}

\tikzstyle{wbullet}=[circle, draw=black, fill=white, thick, inner sep=0pt, minimum size=1.5mm]
\tikzstyle{wbullet}=[circle, draw=black, fill=white, thick, inner sep=0pt, minimum size=1.5mm]
\tikzstyle{bbullet}=[circle, draw=black, fill=black, inner sep=0pt, minimum size=1.5mm]
\tikzstyle{cross}=[circle, draw=black, fill=white, inner sep=0pt, minimum size=1.5mm]

\usepackage[
]{hyperref}

\usepackage{todonotes}
\theoremstyle{plain}
\newtheorem{theorem}{Theorem}[section]
\newtheorem{lemma}[theorem]{Lemma}
\newtheorem{corollary}[theorem]{Corollary}
\newtheorem{proposition}[theorem]{Proposition}

\theoremstyle{definition}
\newtheorem{definition}[theorem]{Definition}

\newtheorem{remark}[theorem]{Remark}
\newtheorem{claim}[theorem]{Claim}
\newtheorem{question}[theorem]{Question}
\newtheorem{notation}[theorem]{Notation}

\newcommand{\bA}{\mathbb{A}}

\newcommand{\bC}{\mathbb{C}}

\newcommand{\bP}{\mathbb{P}}
\newcommand{\bQ}{\mathbb{Q}}
\newcommand{\bR}{\mathbb{R}}
\newcommand{\bZ}{\mathbb{Z}}

\newcommand{\cC}{\mathcal{C}}
\newcommand{\cD}{\mathcal{D}}

\newcommand{\cO}{\mathcal{O}}

\newcommand{\cS}{\mathcal{S}}

\newcommand{\pet}{\mathrm{pet}}

\newcommand{\lct}{\mathrm{lct}}

\newcommand{\Supp}{\mathrm{Supp}}
\newcommand{\vol}{\mathrm{vol}}
\newcommand{\ord}{\mathrm{ord}}
\newcommand{\Exc}{\mathrm{Exc}}

\newcommand{\mult}{\mathrm{mult}}

\newcommand{\divisor}{\mathrm{div}}

\makeindex

\title[Minimal volume of normal stable surface pairs]
{The Minimal Volume of Normal Stable Surface Pairs with Reduced Boundary Containing a Zero-Dimensional Non-klt Center}
\author{Jihao Liu, Weili Shao}
\address{Department of Mathematics, Peking University, No. 5 Yiheyuan Road, Haidian District, Beijing
100871, China}
\address{Beijing International Center for Mathematical Research, Peking University, No. 5 Yiheyuan
Road, Haidian District, Beijing 100871, China}
\email{liujihao@math.pku.edu.cn}
\address{School of  Mathematical Sciences, Xiamen University, Xiamen, Fujian 361005, P.~R.~China}
\email{wlshaomath@stu.xmu.edu.cn}

\subjclass[2020]{Primary 14J29; Secondary 14E30, 14J17}
\keywords{normal stable surface pairs, volumes, non-klt centers, infinite accumulation points}

\begin{document}

\begin{abstract}
    We show that the minimum volume among normal stable surface pairs with nonzero reduced boundary containing a zero-dimensional non-klt center is $\frac{1}{42}$, and show that the surface pair attaining this minimum is unique up to isomorphism. As an application, we show that $\frac{1}{42}$ is an infinite accumulation point of the volume set of normal stable surfaces.
\end{abstract}

\maketitle

% The introduction will be added after the mathematical sections have
% reached their final form.

\tableofcontents
\enlargethispage{3pt}

\section{Introduction}

We work over $\bC$. A normal stable surface pair $(X,B)$ is a projective log canonical surface pair for which $K_X+B$ is an ample $\bR$-Cartier divisor. When $B=0$, we call $X$ a normal stable surface. Normalizing non-normal stable surfaces in KSBA compactifications yields normal stable surface pairs with reduced conductor boundary \cite[Theorem~5.13]{Kollar13book}.

Determining minimal volumes and classifying the surfaces and pairs attaining them can provide a natural starting point for the explicit study of the associated KSBA moduli spaces; see, for example, \cite{LiuLiu26minimal,alexeevliuschutt2025modulispacestablesurfaces}. Minimal volumes are known for several classes of normal stable surfaces and surface pairs \cite{alexeevliu19picardone,AlexeevLiu19opensurfaces,LiuShokurov2023optimal,Liu26positivegenus,liuliu26minimalvolumestablesurfaces,LiuLiu26minimal}. In particular, Alexeev and W.~Liu \cite{AlexeevLiu19opensurfaces} constructed a normal stable surface pair with nonzero reduced boundary and volume $\frac{1}{462}$, and the first author and Shokurov \cite[Theorem~1.4]{LiuShokurov2023optimal} later proved that $\frac{1}{462}$ is the minimal volume in this class. For normal stable surfaces, however, the minimum remains unknown. The smallest known volume is $\frac{1}{48983}$, realized by a klt surface first constructed by Alexeev and W. Liu \cite{AlexeevLiu19opensurfaces}; see also \cite{Totaro24minimalvolume}. This value is conjectured to be the minimum. 

In contrast, the smallest accumulation point of the volumes of normal stable surfaces is known: the first author and W.~Liu \cite[Theorem~1.1 and Corollary~1.2]{LiuLiu26minimal} proved that it is $\frac{1}{825}$, which is also the minimal volume among such surfaces with non-empty non-klt locus.

To discuss higher order accumulation points, we introduce the following notation. Given a set $\cC\subset [0,1]$, we set
\begin{itemize}
    \item $\cS(2,\cC)$: the set of projective lc surface pairs $(X, B)$ with $B\in \cC$ and $K_X+B$ ample;
    \item $\mathrm{Vol}(2,\cC)$: the set of volumes $\mathrm{vol}(X, K_X+B)$ for $(X, B)\in \cS(2,\cC)$; 
    \item $\mathrm{Vol}^{(0)}(2,\cC):=\mathrm{Vol}(2,\cC)$, and let $\mathrm{Vol}^{(k+1)}(2,\cC)$ be the derived set of $\mathrm{Vol}^{(k)}(2,\cC)$ for $k\geq0$.
\end{itemize}

If $v\in \mathrm{Vol}^{(k)}(2,\cC)$ for all $k\geq 1$, then $v$ is called an \emph{infinite accumulation point} of $\mathrm{Vol}(2,\cC)$ and we denote by $\mathrm{Vol}^{(\infty)}(2,\cC)$ the set of all infinite accumulation points of $\mathrm{Vol}(2,\cC)$ (see \cite[Definition 1.1]{Shao26p}). The second author proved that $\mathrm{Vol}^{(\infty)}(2,\{0\})\neq \emptyset$, and hence $\mathrm{Vol}^{(\infty)}(2,\cC)\neq \emptyset$ for every set $\cC\subseteq [0,1]$. This raises the following problem in dimension two.

\begin{question}[{\cite[Question 4.3]{Shao26p}}]\label{ques}
    What is the smallest infinite accumulation point $m_2$ of $\mathrm{Vol}(2,\{0\})$? 
\end{question}

The existence result is based on the following geometric construction.

\begin{theorem}[{\cite[Theorem 1.4]{Shao26p}}]\label{thm:shao26}
    Let $(X, B)$ be a $\bQ$-factorial projective lc surface with $K_X+B$ ample. Suppose that there exists a closed point $x$ on an irreducible component $B_0$ of $B^{=1}$ such that $x$ is a non-klt center of $(X, B)$. Then
    \begin{align*}
        \mathrm{vol}(X, K_X+B)\in \mathrm{Vol}^{(\infty)}(2, \cC_B),
    \end{align*} where $\cC_B$ is the coefficient set of $B$.
\end{theorem}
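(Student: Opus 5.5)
We have to show that $v:=\mathrm{vol}(X,K_X+B)$ lies in $\mathrm{Vol}^{(k)}(2,\cC_B)$ for every $k$. We will produce, for each $k$, a $k$-parameter family of pairs in $\cS(2,\cC_B)$ whose volumes converge to $v$ in an iterated way. The argument is by induction on $k$, and the key point is to keep the hypothesis of the theorem stable under the construction. So the main step is this: given $(X,B)$ as in the statement, we construct a sequence $(X_n,B_n)\in\cS(2,\cC_B)$ such that
\begin{itemize}
\item each $(X_n,B_n)$ again satisfies all the hypotheses of the theorem,
\item $\mathrm{vol}(K_{X_n}+B_n)\to v$, and
\item $\mathrm{vol}(K_{X_n}+B_n)\neq v$.
\end{itemize}
Once this is done, induction shows that $v$ is a limit of points of $\mathrm{Vol}^{(k)}$ distinct from $v$, hence $v\in\mathrm{Vol}^{(k+1)}$.

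\textbf{Local structure at $x$.} Since $x\in B_0$, $B_0$ has coefficient $1$, and $x$ is a non-klt centre, the classification of lc surface singularities applies. Either $B$ has a second component through $x$ with $B_0+B_1$ nodal there (possibly after a cyclic quotient), or $x$ is a point where $(X,B_0)$ is strictly lc with $B_0$ through it, which is a dlt-type chain case. In both cases, after a suitable extraction $f\colon Y\to X$ we get an exceptional curve $E$ with discrepancy $-1$ over $x$, and $K_Y+B_Y+E=f^*(K_X+B)$. Here $E$ meets the strict transform of $B_0$ and forms an lc chain.

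\textbf{The perturbation.} We blow up a point $p_n$ on $E$, or more precisely a chain of length $n$ of weighted blow-ups at a node of the boundary chain. This produces a new exceptional curve $F_n$, which we put with coefficient $1$ in the boundary. The intermediate curves are then contracted, producing a cyclic quotient singularity whose index grows with $n$. The contracted curves carry coefficient $0$, so the coefficients stay in $\cC_B$. The result is a pair $(X_n,B_n)$ in which a new point $x_n$ on the reduced boundary is again an lc centre: it is a node of the reduced boundary chain, formed by $F_n$ together with the strict transform of $B_0$ or $E$.

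\textbf{Volume computation.} Volumes can be computed on $Y$. The divisor $K_{X_n}+B_n$ pulls back to $f^*(K_X+B)$ minus a small exceptional correction $\epsilon_n$, whose self-intersection is of order $1/n$. This gives $\mathrm{vol}_n=v-c/n+O(1/n^2)$ with $c>0$, so $\mathrm{vol}_n\to v$ and $\mathrm{vol}_n\neq v$ for $n$ large. This is the standard behaviour of volume under the degeneration $\mathrm{lct}$ to the limit. $\bQ$-factoriality is preserved because surfaces with quotient singularities are $\bQ$-factorial.

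\textbf{Main obstacle: ampleness of $K_{X_n}+B_n$.} This must be checked on every curve. On curves not passing near $x$ it follows for large $n$ from ampleness of $K_X+B$ and the convergence of the pullbacks. On the new curve $F_n$ it requires a careful choice of the weights so that $(K+B)\cdot F_n>0$. Here we would use adjunction on $F_n\cong\bP^1$: $F_n$ contains at least three special points (two singular points and a boundary intersection), whose different contributions sum to more than $2$. If this fails, we would instead run a $(K+B_n)$-MMP over $X$ and verify that it does not contract $F_n$ or destroy the new lc centre.
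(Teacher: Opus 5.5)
\textbf{There is a genuine gap: the one-step construction fails, and the ``main obstacle'' you flag is a real obstruction, not a matter of choosing weights.} Your induction scheme is sound as logic, but it needs that construction.

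\emph{Your perturbation does not change the volume.} At a node of the reduced boundary, such as $y=B_Y\cap E$, both branches have coefficient one. By Lemma~\ref{lem:local-toric-discrepancy} with $b_0=b_{n+1}=1$, every divisor obtained by subdividing the cone at $y$ has log discrepancy $0$ with respect to $(Y,B_Y+E)$. Extracting $F_n$ with coefficient one and contracting the intermediate curves just gives the toric extraction of the single ray of $F_n$, and $K_{X_n}+B_n$ is the crepant pullback of $K_Y+B_Y+E$. Consequently:
\begin{itemize}
\item the volume equals $v$ for every $n$, so there is no $-c/n$ term;
\item $(K_{X_n}+B_n)\cdot F_n=(K_{X_n}+B_n)\cdot E=0$, and the ample model is $(X,B)$ again;
\item your adjunction count is wrong: a toric divisor meets only its two neighbours, here both of coefficient one, so $\deg(K_{F_n}+\mathrm{Diff})=-2+1+1=0$.
\end{itemize}

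\emph{This is not an accident of the construction.} Let $\phi\colon X_n\to X$ be any birational morphism with $\phi_*B_n=B$ and $K_{X_n}+B_n$ ample. Your $X_n$ is of this type, since it is obtained by blowing up over $Y$ and contracting curves exceptional over $X$. Put $D:=K_{X_n}+B_n-\phi^*(K_X+B)$.
\begin{itemize}
\item $D$ is exceptional and $D\cdot G>0$ for every exceptional curve $G$, so $-D\geq 0$ by the negativity lemma.
\item $\mult_G D=0$ is impossible, since it would give $D\cdot G\leq 0$. Hence $\mult_G D<0$ for every exceptional $G$.
\item Therefore $a(G',X_n,B_n)=a(G',X,B)-\ord_{G'}D>0$ for every divisor $G'$ centred on $\Exc(\phi)$.
\end{itemize}
So no exceptional curve can keep coefficient one; your MMP fallback must contract both $F_n$ and $E$. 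Moreover, any genuine modification over $x$ destroys the zero-dimensional non-klt centre you need. The hypothesis class is not reproduced, so the induction has nothing to run on.

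\emph{How the actual proof gets around this.} The argument recalled in the proof of Theorem~\ref{thm:main-thm}, from \cite[Proposition~3.1]{Shao26p}, gives up self-similarity. For each fixed $n$:
\begin{itemize}
\item One blows up $y$ torically with $n$ exceptional curves $C_1,\dots,C_n$ of coefficient \emph{zero}, so that $K_W+C_0+C_{n+1}=\pi^*(K_Y+B_Y+E)-\sum_i C_i$.
\item One then contracts $C_0=\pi^{-1}_*E$, which has become negative.
\item The volumes of the resulting pairs $(Z,B_Z)$ form a family $\mathcal W$ depending on $(p_1,q_1,\dots,p_n,q_n)$. An iterated limit in these parameters puts $v$ in the $n$-th derived set $\mathcal W^{(n)}$.
\item The pairs $(Z,B_Z)$ no longer have a zero-dimensional non-klt centre, but $B_Z$ is still an accessible non-klt centre. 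The Alexeev--Liu criterion \cite[Theorem~1.1]{AlexeevLiu19accpoint} therefore places $\mathcal W$ in the first derived set of the volume set.
\end{itemize}
Together these give $v\in\mathrm{Vol}^{(n+1)}$ for every $n$. These are the two ingredients your proposal lacks: multi-parameter families with iterated limits instead of a hypothesis-preserving perturbation, and an external accumulation criterion to supply the last derived level.
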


Motivated by this construction, we determine the minimal volume among projective lc surface pairs $(X,B)$ such that $K_X+B$ is ample, $B$ is a nonzero reduced divisor and contains a zero-dimensional non-klt center, and classify the pairs attaining it. We further show that this minimum is an infinite accumulation point of $\mathrm{Vol}(2,\{0\})$, thereby obtaining an upper bound for $m_2$.

More precisely, let $\cS'(2,\{0,1\})$ denote the subset of $\cS(2,\{0,1\})$ consisting of pairs $(X,B)$ such that $B$ contains a zero-dimensional non-klt center of $(X,B)$.
\begin{theorem}\label{thm:main-thm}
    If $(X, B)\in \cS'(2, \{0,1\})$, then
    \begin{align*}
        (K_X+B)^2\geq \frac{1}{42}.
    \end{align*}
    Moreover, the surface pair that achieves this minimum is uniquely determined up to isomorphism, and $\frac{1}{42}\in \mathrm{Vol}^{(\infty)}(2,\{0\})$. In particular, $m_2\leq \frac{1}{42}$.
\end{theorem}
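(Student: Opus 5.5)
Our plan has three parts: reduce to a normal form, optimize a local contribution, and then identify the extremal pair and apply Theorem \ref{thm:shao26}.

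\textbf{Reduction.} Let $(X,B)\in\cS'(2,\{0,1\})$ and let $x\in B$ be a zero-dimensional non-klt center. First we pass to a $\bQ$-factorialization, or to a dlt modification that only extracts divisors over points away from $x$. Then we run a $(K_X+B_0)$-MMP, where $B_0$ is a component of $B$ through $x$. Lowering the boundary to $B_0$ can only decrease $\mathrm{vol}(K_X+B)$, since volume is monotone in the boundary and is preserved by the log canonical model. We must check that $x$ stays a non-klt center; the choice of $B_0$ should guarantee this. After these steps we may assume $B=B_0$ is irreducible, $X$ is $\bQ$-factorial, and $x\in B_0$ is a strictly lc point of $(X,B_0)$. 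The MMP could end in a Mori fibre space. We must exclude this, or show it forces a larger volume, using that $K_X+B$ was ample.

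\textbf{Local analysis at $x$.} Lc surface germs $(X\ni x, B_0)$ with $B_0$ through a non-klt center are classified. Either $B_0$ is a node or has two branches at $x$ (a cyclic quotient point with the boundary as the two axes), or $B_0$ is smooth at $x$ and $x$ is a $\bZ/2$-type point whose dual graph is a chain with two $(-2)$ forks at the end away from $B_0$. Next we use adjunction, $(K_X+B_0)|_{B_0^\nu}=K_{B_0^\nu}+\mathrm{Diff}$. The different at $x$ has coefficient $1$, and the other points $p_i$ contribute coefficients $1-1/n_i$ or $1$. Hence
$$(K_X+B_0)\cdot B_0=2p_a-2+1+\sum_i (1-1/n_i)>0,$$
which forces inequalities of the type $\sum(1-1/n_i)>1$.

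\textbf{Main obstacle.} The core step, and the one we expect to be hardest, is converting this curve-level positivity into the bound $(K_X+B_0)^2\ge\frac1{42}$. We would write $K_X+B_0=\lambda B_0+(\text{remaining part})$ via a Zariski-type decomposition. We would then combine the optimal inequality $(K_X+B)^2\ge \frac{((K_X+B)\cdot B_0)^2}{B_0^2}$ when $B_0^2>0$, and the Hodge index theorem when $B_0^2\le 0$. The arithmetic of $(2,3,7)$-type different coefficients suggests the minimizer has $(K_X+B_0)\cdot B_0=1-\tfrac12-\tfrac13=\tfrac16$ and a ratio giving $\tfrac16\cdot\tfrac17=\tfrac1{42}$. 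If this estimate is not sharp in all cases, the fallback is a case analysis over the finitely many $(n_i)$ with small $(K_X+B_0)\cdot B_0$. In each case we would bound $B_0^2$ from above using the singularities along $B_0$ and the ampleness of $K_X+B_0$ on other curves. That last point may require running the MMP on $K_X$ or $K_X+tB_0$ to produce a del Pezzo or fibration structure on which we can compute explicitly.

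\textbf{Uniqueness and accumulation.} In the equality case every inequality is tight, which pins down the singularities along $B_0$ and the numerics. We would then construct the extremal surface explicitly, for instance as a quotient or a blow-up/contraction of a weighted projective plane or a Hirzebruch surface containing a suitable line. Uniqueness follows by showing that the minimal resolution data determine the surface, by contracting to $\bP^2$ or $\mathbb F_n$ with a fixed configuration. Finally, the minimizer is $\bQ$-factorial, $B$ is reduced, and the boundary contains the non-klt point $x$. Theorem \ref{thm:shao26} with $\cC_B=\{1\}$ then gives $\frac1{42}\in\mathrm{Vol}^{(\infty)}(2,\{1\})$. We would need to check that the construction in \cite{Shao26p} produces boundary-free surfaces, for instance via gluing to non-normal surfaces or by the coefficient set $\{0\}$ version, so that $\frac1{42}\in\mathrm{Vol}^{(\infty)}(2,\{0\})$ and $m_2\le\frac1{42}$.
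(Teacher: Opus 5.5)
\textbf{There is a genuine gap at the step you flag as the main obstacle, and the reduction before it also fails.}

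\textbf{The reduction.} Dropping to a single component does not preserve the hypothesis. Suppose $x$ is a transversal crossing of two distinct components $B_0,B_1$ at a smooth point. Then $(X,B_0)$ is plt at $x$, so no choice of component keeps $x$ as a non-klt centre. Moreover, $K_X+B_0$ need not be big, so a Mori fibre space outcome of the $(K_X+B_0)$-MMP says nothing about $\vol(K_X+B)$. No reduction is needed: since $K_X+B$ is nef, adjunction on the component through $x$ gives $(K_X+B)\cdot B\geq (K_X+B)\cdot B_0\geq\frac16$ directly.

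\textbf{The core inequality.} The inequality $(K_X+B)^2\geq ((K_X+B)\cdot B_0)^2/B_0^2$ is the Hodge index theorem read backwards. For $K_X+B$ ample, Hodge index gives $(K_X+B)^2\,B_0^2\leq((K_X+B)\cdot B_0)^2$, which is an upper bound on the volume. For $B_0^2\leq 0$ it says nothing.

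Your decomposition idea $K_X+B=\lambda B+P$ with $P$ pseudo-effective is the right one. The optimal choice is $\lambda=1-b$ with $b=\pet(X,0;B)$, which gives $(K_X+B)^2\geq(1-b)(K_X+B)\cdot B\geq\frac{1-b}{6}$. But the whole content of the lower bound is controlling $b$, and your proposal has no mechanism for that. A case analysis of different coefficients cannot supply this global input.

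The paper handles $b>\frac67$ as follows:
\begin{itemize}
\item It runs a $(K_X+sB)$-MMP with $\frac67<s<b$ and shows via \cite[Corollary~7.1]{LiuShokurov2023optimal} that it ends on a Picard rank one surface with $K_W+b_WB_W\equiv0$.
\item It applies \cite[Lemma~3.11]{LiuShokurov2023optimal}: for $b_W>\frac67$ the boundary is irreducible, and the zero-dimensional non-klt centre forces it to be a rational curve nodal at a smooth point. Hence $(K_X+B)\cdot B\geq\frac12$.
\item The gap $b=\frac{12}{13}$ or $b\leq\frac{10}{11}$ of \cite[Theorem~2.6]{LiuLiu26minimal} then yields $(K_X+B)^2\geq\frac1{22}$.
\end{itemize}

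\textbf{Uniqueness and accumulation.} Both later parts inherit the gap.

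For uniqueness, the decisive fact is that equality forces $b=\frac67$ and $K_X+\frac67B\sim_{\bQ}0$. So $(X,\frac67B)$ is an exceptional klt log Calabi--Yau pair with $\delta=2$, since the lc place $E$ over $x$ has $a(E,X,\frac67B)\leq\frac17$. Extracting $E$ and running an MMP lands on Shokurov's rank-one pairs $(A_2^6)$ or $(I_2^2)$ on $\bP(1,2,3)$. A toroidal classification of the crepant extraction, together with explicit sections of degrees $2,6,7,9$, then identifies $X$ with $X_{18}\subset\bP(2,6,7,9)$. Contracting to $\bP^2$ or $\mathbb F_n$ from resolution data supplies no comparable rigidity.

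For the accumulation statement, gluing to non-normal surfaces leaves $\cS(2,\{0\})$, whose members are normal. What is needed is:
\begin{itemize}
\item apply the construction of \cite[Proposition~3.1]{Shao26p} to $(Y,B_Y+E)$;
\item observe that the resulting boundaries $B_Z$ are coefficient-one rational curves with $(K_Z+B_Z)\cdot B_Z>0$, hence accessible non-klt centres;
\item invoke \cite[Theorem~1.1]{AlexeevLiu19accpoint} with coefficient set $\{0\}$ to place their volumes in $\mathrm{Vol}^{(1)}(2,\{0\})$, then iterate.
\end{itemize}
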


This theorem is closely related to the determination of $m_2$. Indeed, an affirmative answer to \cite[Question~4.2]{Shao26p}, asking whether every infinite accumulation point of $\mathrm{Vol}(2,\{0\})$ is realized as the volume of a pair in $\cS'(2,\{0,1\})$, would imply $m_2=\frac{1}{42}$ by Theorem~\ref{thm:main-thm}.

\medskip
\noindent\textit{Sketch of the proof.}
In Section~2, we collect the notation and basic properties of surface pairs, complements, and toroidal pairs. 

In Section~3, we establish the lower bound $\frac{1}{42}$. Let $(X,B)\in \cS'(2,\{0,1\})$ and choose an irreducible component $B_0$ of $B$ through a zero-dimensional non-klt center $x$. Set $b:=\operatorname{pet}(X,0;B)$. Adjunction formula and the structure of non-klt centers give a uniform estimate $(K_X+B)\cdot B_0\geq \frac{1}{6}$. If $b\leq 6/7$, then the definition of the pseudo-effective threshold yields
\begin{align*}
    (K_X+B)^2
    \geq (1-b)(K_X+B)\cdot B_0
    \geq \frac{1}{42}.
\end{align*}
If $b>6/7$, we obtain the stronger estimate $(K_X+B)\cdot B_0\geq \frac{1}{2}$. Combining this with the known gaps for pseudo-effective thresholds (\cite[Theorem 7.7]{LiuShokurov2023optimal}, \cite[Theorem 2.6]{LiuLiu26minimal}), we obtain $(K_X+B)^2\geq 1/22$. Thus $\frac1{42}$ is a lower bound for the volumes of pairs in $\cS'(2,\{0,1\})$.

In Section~4, we show that the lower bound is sharp. Starting from the exceptional pair of type $(I_2^2)$ on $\mathbb P(1,2,3)$, we perform a suitable toroidal extraction over a boundary crossing. The resulting log pair $(Y,B_Y+E)$ has a big and nef log canonical divisor with self-intersection $\frac1{42}$. Passing to the ample model of \(K_Y+B_Y+E\) and taking the induced boundary gives a pair in $\cS'(2,\{0,1\})$ attaining the volume $\frac1{42}$. This proves that the lower bound in Section 3 is optimal.

We next prove the uniqueness (Theorem \ref{thm:uniqueness}). Let $(X,B)$ be a pair attaining the minimal volume. In Section 5, we establish the basic structure of such an pair: $(X, \frac{6}{7}B)$ is an exceptional log Calabi-Yau surface pair with $\delta(X, \frac{6}{7}B)=2$ (Theorem \ref{thm:equality-properties}).

In Section 6, we first show that after extracting an lc place (Lemma \ref{lem:boundary-center-extraction}), the resulting pair $(Y, B_Y+E)$ admits a birational morphism $h\colon (Y, B_Y+E)\to (S,C_1+C_2)$, where $(S, \frac{6}{7}(C_1+C_2))$ is of type $(A_2^6)$ or $(I_2^2)$ in Shokurov's classification of rank one exceptional surface pairs (Lemma \ref{lem:rank-one-reduction}).

Using the numerical constraints on the volumes and intersection numbers, we further show that the exceptional locus of $h$ has only two possible configurations and characterize $h$ explicitly (Lemma \ref{lem:exhaustion}, Proposition \ref{prop:reduction-to-two-types}).

Finally, for each possibility, we construct sections of degrees $2,6,7,$ and $9$ on $S$ whose lifts to $Y$ define a birational map onto a hypersurface $X_{18}\subseteq \bP(2,6,7,9)$. A volume comparison argument (Proposition \ref{prop:ample-model-criterion}) shows that its image $X_{18}$ is the ample model of $K_Y+B_Y+E$ (Proposition \ref{prop:ample-model-two-types}). By construction of $(Y, B_Y+E)$, $X$ is the ample model of $K_Y+B_Y+E$. Hence $X\simeq X_{18}$. Finally, identifying the pushforward of $B_Y$ with $B_{18}\subseteq X_{18}$ (Proposition \ref{prop:ample-model-boundary}) then proves the uniqueness: $(X, B)\simeq (X_{18}, B_{18})$.

In Section 7, we combine the construction of infinite accumulation points (\cite[Proof of Proposition 3.1]{Shao26p}) with the criterion for accumulation points (\cite[Theorem 1.1]{AlexeevLiu19accpoint}) to show that $\frac{1}{42}\in\operatorname{Vol}^{(\infty)}(2,\{0\})$.

\medskip
\noindent{\bf Acknowledgments.} 
This project began when the second author visited the first author at Peking University in September 2025. The second author is grateful for the warm hospitality during this visit. We would like to thank members of the Danus team (namely Guoxiong Gao, Zeming Sun, Bin Wu, Shurui Liu, Jiedong Jiang, Haocheng Ju, Leheng Chen, Ronnie Cheng, Xiping Zhang, and Bin Dong) and the Rethlas team (namely Haocheng Ju, Jiedong Jiang, Shurui Liu, Guoxiong Gao, Yuefeng Wang, Zeming Sun, Bin Wu, Liang Xiao, and Bin Dong) for their contributions to the development of Danus and Rethlas. The first author would like to thank Ruochuan Liu and Gang Tian for constant support and encouragement. The second author would like to thank his advisor Wenfei Liu for helpful discussion.

The first author was partially supported by the National Key R\&D Program of China (\#2024YFA1014400). The second author was partially supported by the National Natural Science Foundation of China (No. 12571046).

\medskip
\noindent\textbf{AI disclosure.}
The authors used the Danus system and ChatGPT 5.6 to assist in developing parts of the lower bound and uniqueness arguments, checking proofs, and polishing the exposition. All AI-assisted material was independently verified and revised by the authors, who take full responsibility for the contents of the paper.

\section{Preliminaries}
\label{sec:preliminaries}

We use the standard terminology of the minimal model program as in \cite{Kollar92Utah, KM98, BCHM}. Set
\begin{align*}
    \Phi_{\mathrm{sm}}:=\left\{1-\frac{1}{m}\ \middle|\ m\in\bZ_{\geq2}\right\}\cup \{1\}, \qquad \Phi_\mathrm{m}:=\Phi_{\mathrm{sm}}\cup [\frac{6}{7},1].
\end{align*}

\subsection{Surface pairs and numerical invariants}

A pair $(X,B)$ consists of a normal projective variety $X$ and an effective $\bR$-divisor $B$ such that $K_X+B$ is $\bR$-Cartier. A divisor \emph{over} $X$ is a prime divisor $E$ on some normal variety $Y$ equipped with a birational morphism $\pi\colon Y\to X$. The closure of the image $\pi(E)$ is called the \emph{centre} of $E$ on $X$ and denoted by $c_X(E)$. Two divisors over $X$ are usually identified if they define the same valuation on the function field of $X$.

Let $\pi\colon Y\to X$ be a proper birational morphism from a normal variety, and let $E$ be a prime divisor on $Y$. For an effective $\bR$-Cartier divisor $D$ on $X$, set $\ord_E(D):=\mult_E(\pi^*D)$. Here $\mult_E$ denotes the coefficient of $E$ in the corresponding divisor.

Let $D$ be a $\bQ$-Cartier Weil divisor on $X$. A nonzero section $s\in H^0(X,\cO_X(D))$ corresponds to a rational function $h\in K(X)^*$ satisfying
\begin{align*}
    \divisor_X(h)+D\geq 0.
\end{align*}
The zero divisor of $s$ is defined by
\begin{align*}
    D_s:=(s=0):=\divisor_X(h)+D.
\end{align*}
In particular, $D_s$ is an effective $\bQ$-Cartier divisor. We define $\ord_E(s):=\ord_E(D_s)$.

The \emph{log discrepancy} of $E$ with respect to $(X, B)$ is
\begin{align*}
    a(E,X,B):=1+\mult_E\bigl(K_Y-f^*(K_X+B)\bigr),
\end{align*}
where $f\colon Y\to X$ is a model on which $E$ appears. We say that $(X,B)$ is \emph{lc} if $a(E,X,B)\geq0$ for every $E$, and \emph{klt} if $a(E,X,B)>0$ for every $E$. For an lc pair $(X,B)$, an irreducible subvariety $Z\subseteq X$ is called a \emph{non-klt center} if $Z=c_X(E)$ for some prime divisor $E$ over $X$ with $a(E,X,B)=0$.

\begin{lemma}\label{lem:lc -pair-underlying-klt}
Let $(X,B)$ be a lc surface pair. If $x\in\mathrm{Supp}(B)$, then $X$ is klt at $x$.
\end{lemma}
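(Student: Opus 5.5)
The plan is to reduce to the standard fact that a log canonical surface singularity which is not klt either has a coefficient-one boundary component through it, or is a non-klt lc singularity of $X$ itself. The latter is a simple elliptic singularity, a cusp, or one of their finite quotients, and for these the discrepancy forces $B=0$ near $x$.

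Concretely, I will argue by contradiction. Suppose $x\in\Supp(B)$ and $X$ is not klt at $x$. Then there is a prime divisor $E$ over $X$ with centre $x$ and $a(E,X,0)\le 0$. Since $X$ is a surface and $x\in\Supp B$, there is a component $B_1$ of $B$, with coefficient $b_1>0$, passing through $x$.

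Let $f\colon Y\to X$ be a resolution on which $E$ appears, and note that $K_X$ is $\bR$-Cartier near $x$, which makes $a(E,X,0)$ well defined. I want to justify this in one of two ways. If $B$ is $\bR$-Cartier near $x$, this is immediate. In general, surfaces are numerically $\bQ$-Cartier via Mumford pullback, so I will work with the numerical pullback throughout; the log discrepancy is defined compatibly, and the inequalities below hold with $f^*$ in the Mumford sense.

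The key computation is the following. Since $B\ge b_1B_1$ and $B_1$ passes through $x$, we get
\[
\ord_E(B)\ \ge\ b_1\,\ord_E(B_1)\ >\ 0.
\]
Here $\ord_E(B_1)>0$ because the Mumford pullback of an effective curve through $x$ has strictly positive coefficients on every exceptional curve over $x$. This holds by negative definiteness of the intersection matrix together with connectedness of $f^{-1}(x)$: the exceptional part $D$ satisfies $D\cdot E_i=-\widetilde{B_1}\cdot E_i\le 0$, with strict inequality for some $E_i$ meeting the strict transform, so $D>0$ on the whole connected fibre. Therefore
\[
a(E,X,B)\ =\ a(E,X,0)-\ord_E(B)\ <\ a(E,X,0)\ \le\ 0,
\]
which contradicts $(X,B)$ being lc.

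The main obstacle is the positivity $\ord_E(B_1)>0$ for every exceptional $E$ over $x$ when $E$ lives on a possibly non-minimal model. I will handle this by choosing $f$ to be a resolution where $E$ appears, applying the negative-definiteness argument on that model, and using that the fibre over $x$ is connected by Zariski's main theorem.
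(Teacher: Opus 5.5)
Your argument is correct, and it takes a different and more elementary route than the paper, which simply appeals to the classification of lc surface germs in \cite[Chapter~4]{KM98}. You avoid the classification altogether. Using Mumford's numerical pullback, you write $a(E,X,B)=a(E,X,0)-\ord_E(B)$ for every divisor $E$ over $x$. The negativity lemma together with connectedness of $f^{-1}(x)$ then gives $\ord_E(B)\geq b_1\ord_E(B_1)>0$, so log canonicity of $(X,B)$ forces $a(E,X,0)>0$.

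Your argument also shows why the statement is special to dimension two: every exceptional divisor over a neighbourhood of $x$ has centre exactly $x\in\Supp(B)$. In higher dimension this fails. For $C$ the cone over an elliptic curve with vertex $v$, the pair $(C\times\bA^1, C\times\{0\})$ is lc, yet $C\times\bA^1$ is not klt at the boundary point $(v,0)$. The classification, by contrast, gives the full list of germs and the $\bQ$-factoriality of $X$ at $x$ at no extra cost.

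One step needs to be made explicit. Only $K_X+B$ is assumed $\bR$-Cartier, so your sentence ``note that $K_X$ is $\bR$-Cartier near $x$'' is not available a priori. What your computation actually proves is that $X$ is \emph{numerically} klt at $x$. Likewise, your opening reduction (``if $X$ is not klt at $x$, some $E$ over $x$ has $a(E,X,0)\leq 0$'') is valid only for numerical klt: a priori, klt could also fail because $K_X$ is not $\bQ$-Cartier.

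To close this, cite that numerically dlt surface germs are $\bQ$-factorial, e.g.\ \cite[Proposition~4.11]{KM98}. Alternatively, use that such singularities are rational and hence $\bQ$-factorial. Then $K_X$ is $\bQ$-Cartier at $x$ and $X$ is klt there in the usual sense. This $\bQ$-factoriality is exactly what the paper uses right after the lemma, in the proof of Theorem~\ref{thm:lower-bound}.
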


\begin{proof}
    This follows from the classification of lc surface germs; see, for example, \cite[Chapter 4]{KM98}.
\end{proof}

Following \cite[Remark~2.4]{AlexeevLiu19accpoint}, a non-klt center of an lc surface pair $(X,B)$ is called \emph{inaccessible} if it is either a simple elliptic singularity $p\notin\Supp(B)$, or a smooth component $B_j$ of $B$ with coefficient one, contained in the smooth locus of $X$ and disjoint from $\Supp(B-B_j)$. All other non-klt centers are called \emph{accessible}.

For an $\bR$-Cartier divisor $D$ on a projective surface, set
\begin{align*}
    \vol(X,D) :=\limsup_{m\to\infty} \frac{2h^0(X,\lfloor mD\rfloor)}{m^2}.
\end{align*}
If $D$ is big and nef, then $\vol(X,D)=D^2$.

Let $(X, B)$ be a projective lc pair, and let $D\geq 0$ be an $\bR$-Cartier $\bR$-divisor on $X$. We define the lc threshold and the pseudo-effective threshold by
\begin{align*}
    \lct(X, B;D):=&\sup\left\{t\geq0\ \middle|\ (X, B+tD) \text{ is lc }\right\}\\
    \pet(X,B;D):=&\inf \{t\geq0 | (X, B+tD) \text{ is lc },\\
    &K_X+B+tD \text{ is pseudo-effective} \}.
\end{align*}

\begin{lemma}[{\cite{Kuwata99lct-red-curve}, \cite[Corollary~3.3]{Prokhorov02lct-II}}]\label{lem:surface-one-gap}
Let $(X,T)$ be a lc surface pair with $T$ reduced and let $S$ be a nonzero effective $\bQ$-Cartier Weil divisor. Then $\mathrm{lc t}(X,T;S)=1$ or $\mathrm{lc t}(X,T;S)\leq\frac{5}{6}$. 
\end{lemma}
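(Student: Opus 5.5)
The plan is to reduce to a local computation on a smooth germ, using a finite cover that is étale in codimension one, and then to do a short case analysis on smooth surface germs.

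First, two global reductions. If some component of $S$ lies in $T$, then $\lct(X,T;S)=0$, so we may assume that $S$ and $T$ have no common components. In that case $\lct(X,T;S)\leq 1$, since the coefficients of $S$ are positive integers. The lc threshold is the minimum of the local thresholds at closed points. So it suffices to show that for every $x\in\Supp S$ the local threshold $c_x$ lies in $\{1\}\cup[0,\frac56]$. If $x\in\Supp S$ is a point where $X$ is not klt, then by Lemma~\ref{lem:lc -pair-underlying-klt} we have $x\notin\Supp T$. Moreover, $x$ is then itself a non-klt center of $(X,T)$, so adding any positive multiple of $S$ destroys log canonicity and $c_x=0$. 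We may therefore assume that $X$ is klt at $x$, i.e.\ $(x\in X)$ is a quotient singularity $\bC^2/G$ with $G$ acting freely in codimension one.

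Let $\pi\colon \bC^2\to X$ be this local quotient map. Since $\pi$ is étale in codimension one, we have $K_{\bC^2}+T'+tS'=\pi^*(K_X+T+tS)$, where $T'=\pi^*T$ is reduced and $S'=\pi^*S$. By the standard behaviour of lc under such finite covers (\cite[\S5.20]{KM98}), $(X,T+tS)$ is lc near $x$ if and only if $(\bC^2,T'+tS')$ is lc near $0$. Hence $c_x=\lct_0(\bC^2,T';S')$. The key point is that $S'$ is an effective divisor with integer coefficients on a smooth germ, hence Cartier and given by a function $f$. Also $(\bC^2,T')$ is lc, so $T'$ is either empty, a smooth curve germ, or a node.

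Now we split by the type of $T'$.
\begin{enumerate}
\item[(a)] \emph{$T'$ is a node.} Then $0$ is an lc center, and $c_x=0$.
\item[(b)] \emph{$T'$ is a smooth curve.} Inversion of adjunction on the smooth germ says that $(\bC^2,T'+tS')$ is lc near $0$ if and only if $(T',tS'|_{T'})$ is lc at $0$. The latter holds if and only if $t\,(S'\cdot T')_0\leq 1$. Thus $c_x=1/(S'\cdot T')_0\in\{1,\frac12,\frac13,\dots\}$.
\item[(c)] \emph{$T'=0$.} Let $m=\mult_0 S'$.
\begin{itemize}
\item If $m=1$, then $c_x=1$.
\item If $m\geq3$, the first blow-up gives $c_x\leq 2/m\leq\frac23$.
\item If $m=2$, then $f$ is either non-reduced, giving $c_x=\frac12$, or reduced, in which case $f$ is an $A_k$ singularity $y^2=x^{k+1}$. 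Then $c_x=\frac12+\frac1{k+1}$, which equals $1$ for $k=1$ and is at most $\frac56$ for $k\geq2$.
\end{itemize}
\end{enumerate}
Combining the cases, $\lct(X,T;S)=\min_x c_x$ is either $1$ or at most $\frac56$.

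The main obstacle is the cover step. We must check that lc of $(X,T+tS)$ is equivalent to lc of the pulled-back pair. We must also check that the pulled-back divisor is genuinely an integral Cartier divisor. This is exactly where the hypothesis that $S$ is a Weil divisor enters: for $\bQ$-divisors with arbitrary coefficients, thresholds in $(\frac56,1)$ do occur. The remaining smooth-germ computations are classical.
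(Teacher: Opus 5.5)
Your proof is correct. The paper gives no argument of its own for this lemma. It imports the result from Kuwata's computation of thresholds of reducible curves on smooth surfaces \cite{Kuwata99lct-red-curve} and from \cite[Corollary~3.3]{Prokhorov02lct-II}, which covers singular germs with a reduced boundary.

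Your route makes the passage between these two settings explicit and self-contained. Points of $\Supp S$ where $X$ is not klt are lc centres of $(X,T)$ lying off $T$, so they give threshold $0$. At klt points, the local smoothing cover $\bC^2\to X$ is \'etale in codimension one. By \cite[Proposition~5.20]{KM98} the local threshold is therefore unchanged, $T'$ is still reduced (so it is empty, smooth or a node), and $S'$ becomes an integral Cartier divisor.

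After that you only use elementary facts on smooth germs:
\begin{itemize}
\item the blow-up bound $2/m$;
\item the value $\lct(A_k)=\frac12+\frac1{k+1}$;
\item inversion of adjunction along a smooth $T'$, which gives $1/(S'\cdot T')_0$.
\end{itemize}
So you never need Kuwata's full description of thresholds, only the gap just below $1$. The argument also shows transparently that the integrality of $S$ is what forces the gap.

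Two cosmetic points. First, the quotient map is a priori analytic, so either note that the finite-cover comparison of lc holds in the analytic category, or realize the cover on an \'etale neighbourhood. Second, $\lct(X,T;S)$ is the infimum of the $c_x$ rather than a minimum; this is harmless because $\{1\}\cup[0,\frac56]$ is closed.

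Finally, both applications of the lemma in the paper have $T=0$. There your case (b), the only place where inversion of adjunction enters, is not needed.
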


We shall repeatedly use surface adjunction in the following form; see \cite[Definition~2.34 and Theorem~3.36]{Kollar13book}. Let $(X,B)$ be an lc surface pair with $B$ reduced. Let $C$ be an irreducible component of $B$, and let $\nu\colon C^\nu\to C$ be its normalization. Then
\begin{align*}
    \nu^*((K_X+B)|_C)
    =K_{C^\nu}+\mathrm{Diff}_{C^\nu}(B-C).
\end{align*}
The coefficients of the $\mathrm{Diff}_{C^\nu}(B-C)$ belong to $\Phi_{\mathrm{sm}}$.

\begin{lemma}\label{lem:positive-branch-one-sixth}
Let $(X,B)$ be a projective lc surface pair with $B$ a nonzero reduced divisor and $K_X+B$ is nef. Suppose that there exists an irreducible component $C$ of $B$ containing a zero-dimensional non-klt center $x$. If $(K_X+B)\cdot C>0$, then
\begin{align*}
    (K_X+B)\cdot C \geq\frac{1}{6}.
\end{align*}
Moreover, equality holds if and only if
\begin{align*}
    C\simeq\mathbb P^1, \qquad 
    \operatorname{Diff}_{C}(B-C)=x+\frac{1}{2}p_2+\frac{2}{3}p_3
\end{align*}
for three distinct closed points $x,p_2,p_3\in C$.
\end{lemma}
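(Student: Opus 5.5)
By the adjunction formula recalled above, pulling back to the normalization $\nu\colon C^\nu\to C$ gives
\begin{align*}
(K_X+B)\cdot C=\deg\bigl(K_{C^\nu}+\mathrm{Diff}_{C^\nu}(B-C)\bigr)=2g(C^\nu)-2+\sum_i d_i,
\end{align*}
where the coefficients $d_i$ lie in $\Phi_{\mathrm{sm}}$. The plan is to show that the divisor $\mathrm{Diff}_{C^\nu}(B-C)$ contains a point of coefficient $1$ lying over $x$. Granting this, the statement reduces to a purely numerical problem: minimize a positive number of the form $2g-2+1+\sum_{j}(1-1/m_j)$.

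\textbf{Step 1: a coefficient-one point over $x$.} Since $x$ is a zero-dimensional non-klt center lying on $C$, I claim some point $x'\in\nu^{-1}(x)$ has coefficient $1$ in the different. There are two cases.
\begin{itemize}
\item If $C$ is not normal at $x$ (say a node of $C$ at $x$), then the different has coefficient $1$ at the preimages of $x$. This is standard: a double point of the reduced boundary through an lc point forces coefficient one. Equivalently, the pair is plt along $C$ away from coefficient-one points.
\item If $C$ is normal at $x$, I use inversion of adjunction \cite[Theorem~4.9]{Kollar13book} (or \cite{KM98}). If the coefficient of the different at $x$ were $<1$, then $(C^\nu,\mathrm{Diff})$ would be klt near $x$, hence $(X,B)$ would be plt near $x$, and $x$ could not be a non-klt center.
\end{itemize}
So $\sum_i d_i\geq 1$, with the point $x'$ contributing exactly $1$.

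\textbf{Step 2: numerical minimization.} Write
\begin{align*}
(K_X+B)\cdot C=2g-2+1+\epsilon+\sum_{j}\Bigl(1-\frac{1}{m_j}\Bigr),
\end{align*}
where $\epsilon\geq 0$ collects the contributions of any further coefficient-one points of the different. Suppose this quantity is positive.
\begin{itemize}
\item If $g\geq1$, the value is at least $1$.
\item If $g=0$ and some further point has coefficient one, the value is at least $\frac12$ when another standard point is present. It is $0$ when there are exactly two coefficient-one points and nothing else, which is excluded by positivity.
\item If $g=0$ and all remaining coefficients are standard, the value is $-1+\sum_j(1-1/m_j)>0$. This is the classical Hurwitz-type minimization, with minimum $\frac16$. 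One point gives a negative value. Two points give at most $1-\frac1{m_1}-\frac1{m_2}+\ldots$; indeed $(1-1/m_1)+(1-1/m_2)-1=1-1/m_1-1/m_2>0$ forces $1/m_1+1/m_2<1$, and the minimum is $\frac16$ at $(m_1,m_2)=(2,3)$. Three or more points give at least $-1+\frac32=\frac12$.
\end{itemize}
Hence the value is at least $\frac16$, with equality exactly for $g=0$ and $\mathrm{Diff}=x'+\frac12p_2+\frac23p_3$.

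\textbf{Step 3: the equality case.} In the equality case there is only one point of coefficient $1$. A node of $C$ at $x$ would create two such points, so $C$ is normal at $x$. Any singular point of $C$ elsewhere would likewise create coefficient-one points, so $C=C^\nu\simeq\mathbb P^1$. The converse direction is immediate from the adjunction computation.

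The main obstacle is Step 1 in the normal case, namely the precise application of inversion of adjunction for $\bR$-divisors on surfaces. Besides this, one must carefully rule out non-klt centers of $(X,B)$ at $x$ that come from $X$ itself rather than from $B$; this is handled by Lemma~\ref{lem:lc -pair-underlying-klt}.
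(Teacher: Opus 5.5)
Your proposal is correct and follows essentially the same route as the paper: inversion of adjunction gives a coefficient-one point of $\mathrm{Diff}_{C^\nu}(B-C)$ over $x$, and the genus-plus-standard-coefficient count then yields $(K_X+B)\cdot C\geq\frac16$, with equality only for $\mathrm{Diff}=x'+\frac12p_2+\frac23p_3$. In the equality case, a singular point of $C$ would contribute a second coefficient-one point, which forces $C\simeq\mathbb P^1$. Your separate non-normal case and the appeal to klt-ness of $X$ along the boundary are harmless but unnecessary, since inversion of adjunction already makes $(X,B)$ plt near $x$. Also, your phrase ``two points give at most $1-\frac1{m_1}-\frac1{m_2}+\ldots$'' should read ``exactly $1-\frac1{m_1}-\frac1{m_2}$''.
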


\begin{proof}
By inversion of adjunction, there exists a point $p_\infty\in C^\nu$ lying over $x$ whose coefficient in $\operatorname{Diff}_{C^\nu}(B-C)$ is equal to one. If $g(C^\nu)\geq 1$, then
\begin{align*}
    (K_X+B)\cdot C&=2g(C^\nu)-2+\deg \mathrm{Diff}_{C^\nu}(B-C)\geq 1,
\end{align*}
so the assertion follows.

Otherwise, $C^\nu\simeq\mathbb P^1$. Since the coefficients of $\mathrm{Diff}_{C^\nu}(B-C)$ belong to $\Phi_{\mathrm{sm}}$ and the different has a point of coefficient one, the inequality $(K_X+B)\cdot C>0$ implies that
\begin{align*}
    (K_X+B)\cdot C\geq -2+1+\frac{1}{2}+\frac{2}{3}=\frac{1}{6}.
\end{align*}
Moreover, equality holds if and only if
\begin{align*}
    C^\nu\simeq \bP^1,\qquad
    \operatorname{Diff}_{C^\nu}(B-C)
    =
    p_\infty+\frac{1}{2}p_2+\frac{2}{3}p_3
\end{align*}
for three distinct points $p_\infty,p_2,p_3\in C^\nu$. It remains to show that $C$ is a smooth rational curve in the equality case. If $C$ is singular, log canonicity of $(X, B)$ implies that $C$ has a node. The preimages in $C^\nu$ of the node contribute two points in the $\mathrm{Diff}_{C^\nu}(B-C)$. Contradiction. Thus $C\simeq \bP^1$ and $p_\infty=x$.
\end{proof}

\subsection{Complements}

In this section, we collect the notation and basic facts on complements, we refer the reader to \cite{Shokurov00Complements-on-surfaces,Prokhorov01lectures, ChenHan21complements}.

\begin{definition}
    Let $(X, B)$ be a log pair. A \emph{$\bQ$-complement} of $K_X+B$ is a log divisor $K_X+B^+$ such that $B^+\geq B$, $(X, B^+)$ is lc and $K_X+B^+\sim_{\bQ} 0$. We say $(X, B)$ is \emph{$\bQ$-complementary} if it has at least one $\bQ$-complement.
    
    A $\bQ$-complementary pair $(X, B)$ is said to be \emph{exceptional} if every $\bQ$-complement of $K_X+B$ is klt.
\end{definition}

\begin{notation}
    Let $(X, B)$ be a klt pair. We put
    \begin{align*}
        \delta(X, B):=\#\{E| a(E, X, B)\leq \frac{1}{7}, E \text{ is an exceptional }\\
        \text{ or non-exceptional prime divisor over } X \}.
    \end{align*}
\end{notation}

\begin{theorem}[{\cite[Theorem 5.1]{Shokurov00Complements-on-surfaces}}]\label{thm:delta-leq-2}
    Let $(X, B)$ be an exceptional surface pair such that $B\in \Phi_{\mathrm{m}}$ and $-(K_X+B)$ is nef, then $\delta(X,B)\leq 2$.
\end{theorem}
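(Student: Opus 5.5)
The plan is to argue by contradiction: assume there are three distinct divisors $E_1,E_2,E_3$ over $X$ with $a(E_i,X,B)\leq\frac17$, and construct a non-klt $\bQ$-complement of $K_X+B$. This contradicts exceptionality.

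\textbf{Step 1: extraction.} Since $(X,B)$ is klt, only finitely many divisors have log discrepancy $\leq\frac17$. Each $E_i$ is therefore either a component of $B$ or exceptional, so I can extract exactly the exceptional ones by a birational morphism $f\colon Y\to X$ from a $\bQ$-factorial surface. Define $B_Y$ by $K_Y+B_Y=f^*(K_X+B)$. Then:
\begin{itemize}
\item the pair $(Y,B_Y)$ is klt;
\item $-(K_Y+B_Y)$ is nef;
\item $B_Y$ has coefficients in $\Phi_{\mathrm m}$;
\item each $E_i$ appears in $B_Y$ with coefficient $\geq\frac67$.
\end{itemize}
Exceptionality is a crepant-birational notion, so $(Y,B_Y)$ is still exceptional.

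\textbf{Step 2: reduction to a curve.} Set $D:=E_1+E_2+E_3$, and let $B_Y'$ be obtained from $B_Y$ by raising the coefficients of the $E_i$ to $1$. The perturbation $B_Y'-B_Y$ has coefficients at most $\frac17$.

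Run a $(K_Y+B_Y')$-MMP. This should terminate in one of two ways: a Mori fibre space over a curve, or a rank one surface $S$ with $-(K_S+B_S')$ ample on which some $E_i$ survives. In the fibration case the fibre through the $E_i$ gives a non-klt complement directly. In the rank one case, I would use the existence of three such components to force at least one of them, say $C$, to satisfy $(K_S+B'_S)\cdot C<0$ and $(K_S+B_S')$ anti-nef near $C$.

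\textbf{Step 3: complement on the curve.} Apply adjunction to $C$. Since the coefficients of $B_S'$ lie in $\Phi_{\mathrm m}$, the coefficients of $\mathrm{Diff}_C(B_S'-C)$ again lie in $\Phi_{\mathrm m}$, and $\deg(K_C+\mathrm{Diff})<0$. Hence $C\simeq\bP^1$ with a different of small degree. I would then check the following claim case by case: every such boundary on $\bP^1$ with coefficients in $\Phi_{\mathrm m}$ admits an $n$-complement with $n\in\{1,2,3,4,6\}$. The point is that for such $n$, any coefficient $\geq\frac67$ rounds up to $1$.

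\textbf{Step 4: lifting.} Lift the complement from $C$ to $S$ by inversion of adjunction together with Kawamata--Viehweg vanishing, applied to $-n(K_S+B_S')$ relative to $C$. Then pull back crepantly to $Y$ and push forward to $X$. The result is a $\bQ$-complement $K_X+B^+$ with $B^+\geq B$ that is non-klt along the image of $C$, contradicting exceptionality.

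\textbf{Main obstacle.} The hard step is Step 2: showing that three such divisors, and not just one or two, force a curve on which the raised pair stays anti-nef. This is where the bound $2$ should come from. Raising coefficients by up to $\frac17$ on only one or two curves can be absorbed by the positivity of $-(K+B)$, as in the pairs of type $(I_2^2)$, so no contradiction is available there. With three curves, I would try a discrepancy-counting argument. First reduce to the case that $-(K_S+B_S)$ is numerically trivial of rank one. Then intersect with an ample class and use $\sum(1-b_i)\leq\frac37$ to show that the raised pair remains numerically anti-nef, i.e. $K_S+B_S'\equiv -\epsilon H$ with $\epsilon\geq 0$. From there Steps 3 and 4 apply.
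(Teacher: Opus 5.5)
There is a genuine gap, and it is in Step 2. Raising the coefficients of the $E_i$ to $1$ makes the log canonical class more positive, not less: $K_Y+B_Y'=(K_Y+B_Y)+\sum_i(1-b_i)E_i$. Take the case you propose to reduce to, $K+B\equiv 0$. Then $K_Y+B'_Y\equiv\sum_i(1-b_i)E_i$, which is a nonzero effective divisor because klt forces $b_i<1$. Two things follow:
\begin{itemize}
\item the $(K_Y+B_Y')$-MMP cannot end with a Mori fibre space, since $K_Y+B_Y'$ is pseudo-effective;
\item on any rank-one model where some $E_i$ survives, $K_S+B_S'\equiv\epsilon H$ with $\epsilon>0$, so the raised pair is ample, not anti-nef.
\end{itemize}
The bound $\sum_i(1-b_i)\le\frac37$ controls the size of the perturbation, not its sign.

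The paper's own $(I_2^2)$ pair in Theorem~\ref{thm:rank-one-exceptional} shows this. There $K_S+\frac67(C_1+C_2)\sim_{\bQ}0$, while $K_S+C_1+C_2\sim H$ is ample. So nothing is ``absorbed'', and a third raised curve would only make $K+B'$ more positive. Consequently no curve $C$ has $(K_S+B'_S)\cdot C<0$. Steps 3 and 4 then have nothing to start from, and nothing in your argument distinguishes two divisors from three.

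The problem is structural, not a fixable technicality. Suppose $(X,B)$ is klt and $K_X+B\equiv0$. Then $(X,B)$ is automatically exceptional, and its only $\bQ$-complement is $B$ itself, because $B^+-B$ is effective and numerically trivial. This is exactly how the paper uses the theorem, for instance for $(X,\frac67B)$ in Theorem~\ref{thm:equality-properties}. In that situation there is no non-klt complement to construct. The statement becomes a non-existence result: three divisors with $a\le\frac17$ cannot occur on a klt log Calabi--Yau pair with coefficients in $\Phi_{\mathrm m}$.

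The coefficient condition therefore has to be used essentially. Take four general lines in $\bP^2$. The pair $(\bP^2,\frac67(L_1+L_2+L_3)+\frac37L_4)$ is klt and log Calabi--Yau, hence exceptional, and has $\delta\ge3$. It fails the hypothesis only because $\frac37\notin\Phi_{\mathrm m}$. Your Step 2 counting never uses the coefficient set, so it would apply verbatim to this pair; that alone shows it cannot be correct as stated.

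The paper does not prove this statement; it quotes it from Shokurov. What your proposal lacks is an argument of that different kind, one that rules out the configuration outright, rather than a repair of the lifting step.
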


The following lemma follows directly from the definitions.

\begin{lemma}\label{lem:crepant-preserve}
    Let $f\colon (Y, B_Y)\to (X, B)$ be a projective birational morphism between surface pairs such that $K_Y+B_Y=f^*(K_X+B)$. Then
    \begin{itemize}
        \item[(1)] $(X, B)$ is klt if and only if $(Y, B_Y)$ is. Furthermore, $\delta(X, B)=\delta(Y, B_Y)$.
        \item[(2)] $(X, B)$ is exceptional if and only if $(Y, B_Y)$ is. 
    \end{itemize}
\end{lemma}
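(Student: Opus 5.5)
The plan is to prove both parts by comparing log discrepancies on a common model. Take a common log resolution: let $g\colon W\to Y$ be a resolution such that $h:=f\circ g\colon W\to X$ is also a resolution. For any prime divisor $E$ over $X$ we may assume $E$ lies on $W$ (after further blowing up). The crepant hypothesis $K_Y+B_Y=f^*(K_X+B)$ gives
\begin{align*}
    g^*(K_Y+B_Y)=h^*(K_X+B).
\end{align*}
Hence $K_W-g^*(K_Y+B_Y)=K_W-h^*(K_X+B)$, and so $a(E,Y,B_Y)=a(E,X,B)$ for every prime divisor $E$ over $X$, equivalently over $Y$, since $X$ and $Y$ are birational and have the same function field.

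For (1), lc/klt is defined purely in terms of $a(E,\cdot,\cdot)$ over all $E$. The two pairs therefore share the klt property. The set $\{E : a(E)\le 1/7\}$ is also the same, because ``exceptional or non-exceptional prime divisor over'' ranges over all valuations in both cases. This gives $\delta(X,B)=\delta(Y,B_Y)$.

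For (2), there are two things to check. First, complements must correspond. Given a $\bQ$-complement $K_X+B^+$ of $(X,B)$, set $B_Y^+$ by $K_Y+B_Y^+=f^*(K_X+B^+)$. Then
\begin{align*}
    B_Y^+-B_Y=f^*(B^+-B)\ge0
\end{align*}
is effective, since $B^+-B\ge 0$ is $\bR$-Cartier (it is the difference of two $\bR$-Cartier log divisors). Moreover $K_Y+B_Y^+\sim_\bQ0$, and $(Y,B_Y^+)$ is lc, respectively klt, exactly when $(X,B^+)$ is, by the discrepancy equality above.

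Conversely, given a complement $K_Y+B_Y^+$ of $(Y,B_Y)$, set $B^+:=f_*B_Y^+$. Then $B^+\ge f_*B_Y=B$, because pushing forward the crepant relation gives $f_*B_Y=B$. Also $K_X+B^+=f_*(K_Y+B_Y^+)\sim_\bQ 0$. To see that it is $\bR$-Cartier and crepant, use the negativity lemma: $K_Y+B_Y^+-f^*(K_X+B^+)$ is $f$-exceptional and $\equiv_f 0$, hence zero. So $K_Y+B_Y^+=f^*(K_X+B^+)$, and the discrepancy comparison again transfers lc and klt.

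Thus complementarity and the klt property of all complements correspond, which proves exceptionality is preserved. The only mildly delicate step is the $\bR$-Cartier/negativity point in the converse direction. On surfaces this is harmless: $K_X+B^+$ is $\bQ$-linearly trivial as a Weil divisor class pushed forward from a $\bQ$-linearly trivial class, hence $\bQ$-Cartier.
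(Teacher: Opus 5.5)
Your argument is correct and is the routine verification the paper means when it says the lemma ``follows directly from the definitions''. The crepant relation gives $a(E,Y,B_Y)=a(E,X,B)$ for every divisor $E$ over $X$, which yields (1). Crepant pullback and pushforward are mutually inverse bijections between $\bQ$-complements of $(X,B)$ and of $(Y,B_Y)$ that preserve klt-ness, which yields (2).

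One simplification: you do not need the negativity lemma in the converse direction. Write $K_Y+B_Y^+=\sum r_i\,\mathrm{div}_Y(\phi_i)$ with $r_i\in\bQ$ and push forward; this gives $f^*(K_X+B^+)=K_Y+B_Y^+$ directly.
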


We say that a surface $S$ has \emph{rank one} if $\rho(S)=1$.

\begin{theorem}[{\cite[Theorem 5.1.3]{Shokurov00Complements-on-surfaces},\cite[Theorem 10.2.1]{Prokhorov01lectures}}]\label{thm:rank-one-exceptional}
    Let $(S, \Delta)$ be a rank one log surface such that $K_S+\Delta$ is $\frac{1}{7}$-lt at closed points, $-(K_S+\Delta)$ is nef, $\Delta\in \Phi_{\mathrm{m}}$, $\delta(S, \Delta)=2$. Assume that $(S, \Delta)$ is exceptional. Write
    \begin{align*}
        \Delta=b_1C_1+b_2C_2+F, \quad F=\sum (1-\frac{1}{m_i})F_i\\
        b_1,b_2\geq \frac{6}{7}, m_i\in \{1,2,3,4,5,6\}.
    \end{align*}
    Then $C:=C_1+C_2$ has only normal crossings at smooth points of $S$, $\Supp(F)$ does not pass through $C_1\cap C_2$ and $b_1+b_2<\frac{13}{7}$. There is a complete classification of such pairs. If we assume, in addition, $F=0$, then the only possibilities are
    \begin{itemize}
        \item[\upshape{($A_2^6$)}] $S=\bP(1,2,3), \Delta=\frac{6}{7}(C_1+C_2)$, where $C_1$ is the line $\{x_1=0\}$, $C_2\in |-K_S|$, $\mathrm{Sing}(S)\subseteq C_1$; in this case $7(K_S+\Delta)\sim 0$.
        \item[\upshape{($I_2^2$)}] $S=\bP(1,2,3), \Delta=\frac{6}{7}(C_1+C_2)$, where $C_1=\{x_3=0\}$, $C_2=\{x_2^2-\alpha_1x_1^4-\alpha_2x_1^2x_2-x_1x_3=0\}$, $\alpha_1,\alpha_2\in \bC$, $(\alpha_1,\alpha_2)\neq (0,0)$; in this case $7(K_S+\Delta)\sim 0$.
    \end{itemize}
\end{theorem}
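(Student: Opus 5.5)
The plan is to treat the general structural statements first and the explicit classification with $F=0$ second. Throughout, write $C:=C_1+C_2$ and use $\rho(S)=1$, so that every divisor is numerically a multiple of an ample class.

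\textbf{Step 1: the structural claims.}
\begin{itemize}
\item \emph{$(S,C)$ is lc.} Since $\Delta\geq b_1C_1+b_2C_2$ with $b_i\geq\frac67$ and $(S,\Delta)$ is lc, we have $\lct(S,0;C)\geq\frac67>\frac56$. Lemma~\ref{lem:surface-one-gap}, applied with $T=0$ and the reduced divisor $C$, then gives $\lct(S,0;C)=1$. Hence $(S,C)$ is lc, and the same argument with $F$ added shows that $(S,C+\Supp F)$ is lc near $C$.
\item \emph{$K_S+C$ is ample.} Suppose instead that $-(K_S+C)$ were nef. By $\rho=1$ and known existence of $\bQ$-complements on lc surfaces with nef anti-log-canonical divisor, $(S,C)$ would admit a $\bQ$-complement $K_S+C^+$ with $C^+\geq C$. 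This complement is not klt along $C$, contradicting exceptionality. Since $\rho(S)=1$, it follows that $K_S+C$ is ample. Together with $-(K_S+\Delta)$ nef, this gives $(1-b_1)C_1+(1-b_2)C_2>-(K_S+\Delta)$ numerically.
\item \emph{Normal crossings and the bound $b_1+b_2<\frac{13}{7}$.} Lc-ness of $(S,C)$ plus the $\frac17$-lt condition at closed points is what I would use here. At a singular point of $S$ on $C_1\cap C_2$, or at a tangency, one can extract a divisor with log discrepancy $\leq\frac17$ for $(S,\Delta)$. That would be a third divisor counted by $\delta$, contradicting $\delta=2$. The same extraction at $C_1\cap C_2\cap\Supp F$ shows that $F$ avoids $C_1\cap C_2$. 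Blowing up the node $C_1\cap C_2$ itself gives log discrepancy $2-b_1-b_2$, and $\delta=2$ forces this to exceed $\frac17$, i.e.\ $b_1+b_2<\frac{13}{7}$.
\end{itemize}

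\textbf{Step 2: the case $F=0$.} First I would show that $S$ is a rank one log del Pezzo surface whose singularities all lie on $C$. Away from $C$, $S$ is $\frac17$-lt with $\Delta=0$. A singular point off $C$ with small log discrepancy would again contribute to $\delta$; I would use this to show that the singularities off $C$ are absent, or at most allowed ones, ruling them out case by case.

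Then I would pass to the minimal resolution $\widetilde S$ and use adjunction on each $C_i$, which is a smooth rational curve by the normal crossing statement. The differents $\mathrm{Diff}_{C_i}(C-C_i)$ have coefficients in $\Phi_{\mathrm{sm}}$, and the relation
\[
(K_S+\Delta)\cdot C_i\leq 0<(K_S+C)\cdot C_i
\]
pins down the degrees of these differents. Since $b_i\geq\frac67$, the only room is a different of type $(2,3)$ plus the node with $C_j$, giving degree $\frac16$ or close to it. This strongly restricts the singularities on $C_i$ to cyclic quotient points of index $2$ and $3$.

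From the resulting Picard-rank count on $\widetilde S$ (Noether's formula with $\rho(S)=1$) I would identify $S\simeq\bP(1,2,3)$. Writing $C_i\sim d_iH$ with $H=\cO(1)$ and $K_S\sim -6H$, the condition $K_S+\frac67(d_1+d_2)H\equiv 0$ or $\leq 0$, combined with ampleness of $K_S+C$, forces $d_1+d_2=7$. The configurations realizing this are:
\begin{itemize}
\item $(d_1,d_2)=(1,6)$, giving type $(A_2^6)$;
\item $(d_1,d_2)=(3,4)$, giving type $(I_2^2)$, where normal crossings and the singular-point conditions yield the stated normal form for $C_2$ by coordinate changes.
\end{itemize}
In both cases $7(K_S+\Delta)\sim 0$.

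\textbf{Main obstacle.} I expect the hardest step to be identifying $S$ as $\bP(1,2,3)$, i.e.\ excluding the other rank one log del Pezzo surfaces with two boundary curves of coefficient $\geq\frac67$. My first attempt would be to bound the indices of the singular points via the different computation on $C_i$, and then run through the short list of Gorenstein-index constraints.
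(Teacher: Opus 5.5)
The paper does not prove this theorem. It quotes it from Shokurov's classification of exceptional rank-one log surfaces with $\delta=2$ \cite[Theorem 5.1.3]{Shokurov00Complements-on-surfaces}, \cite[Theorem 10.2.1]{Prokhorov01lectures}, and later uses only the case $F=0$.

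Your Step~1 is essentially sound:
\begin{itemize}
\item The $\frac56$-gap gives that $(S,C)$ is lc.
\item At a cyclic quotient point $\frac1r(1,s)$ with $r\ge2$ lying on two branches of $C$, the divisor of the ray $(1,0)$ has log discrepancy at most $\frac{1+s}{7r}\le\frac17$. This contradicts $\frac17$-lt at closed points.
\item Log canonicity at smooth points forces nodes.
\item Blowing up a point of $C_1\cap C_2$ (nonempty since $\rho(S)=1$) gives $b_1+b_2<\frac{13}{7}$. If a component of $F$ passes through that point, the log discrepancy becomes negative.
\end{itemize}
Two small corrections. First, the side claim that $(S,C+\Supp F)$ is lc near $C$ does not follow. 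Coefficients of $F$ may equal $\frac12$, and then the gap lemma says nothing; a coefficient-$\frac12$ component simply tangent to $C_1$ at a smooth point is locally allowed when $b_1=\frac67$. Second, your proof that $K_S+C$ is ample only works for $F=0$, because a complement $C^+\ge C$ of $K_S+C$ need not dominate $F$.

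The genuine gap is Step~2, which is the entire content of the classification, and several of its premises are false.
\begin{itemize}
\item[(i)] The claim that each $C_i$ is a smooth rational curve ``by the normal crossing statement'' is wrong. Normal crossings allow nodes and say nothing about the genus. In case $(A_2^6)$ the curve $C_2\in|-K_S|$ has arithmetic genus one: generically it is smooth elliptic, and it is nodal in the situation of Proposition~\ref{prop:reduction-to-two-types}.
\item[(ii)] The inequalities $(K_S+\Delta)\cdot C_i\le0<(K_S+C)\cdot C_i$ only give $0<(K_S+C)\cdot C_i\le\frac17\,C\cdot C_i$. This bound involves the unknown numbers $C_i^2$ and $C_1\cdot C_2$, so it does not pin down the different. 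Your predicted outcome (a different of type $(2,3)$ plus the node, degree $\frac16$) fails for three of the four curves in the answer. One has $(K_S+C)\cdot C_{(3)}=\frac12$, $(K_S+C)\cdot C_{(4)}=\frac23$ and $(K_S+C)\cdot C_{(6)}=1$. The corresponding differents are $p_++p_-+\frac12Q_2$, $p_++p_-+\frac23Q_3$, and a single point $p$ (or $p+n_1+n_2$ on the normalization when $C_{(6)}$ is nodal).
\item[(iii)] Singular points off $C$ are invisible to $\delta$ and to the $\frac17$-lt condition. With $\Delta=0$ there, Du Val points and $\frac1r(1,1)$ points with $r\le13$ have all log discrepancies $>\frac17$. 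Similarly, $A_n$ points on $C_i$ are allowed for every $n$. Excluding such points and bounding the indices needs an argument that uses exceptionality globally, by producing non-klt $\bQ$-complements. Your plan never does this beyond the ampleness of $K_S+C$. Without the singularity basket, Noether's formula cannot identify $S$ with $\bP(1,2,3)$.
\item[(iv)] Even granting $S\simeq\bP(1,2,3)$, your list of cases with $d_1+d_2=7$ omits $(2,5)$. It is excluded because every member of $|\cO(5)|$ and every prime member of $|\cO(2)|$ pass through $[0:0:1]$, contradicting normal crossings at smooth points.
\end{itemize}
So the proposal establishes the easy structural assertions and the final bookkeeping on $\bP(1,2,3)$. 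It does not establish the identification of $S$, which is exactly what the cited classification supplies.
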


For the remainder of this paper, we may label the two boundary components on $S\simeq\mathbb P(1,2,3)$ by their weighted degrees. Thus
\begin{align*}
    C_1+C_2&=C_{(1)}+C_{(6)} \quad \text{in the case }(A_2^6),\\
    C_1+C_2&=C_{(3)}+C_{(4)} \quad \text{in the case }(I_2^2).
\end{align*}

For later use, we record convenient normal forms for the two rank-one exceptional surface pairs.

\begin{lemma}\label{lem:rank-one-normal-forms}
Suppose that the curve $C_{(6)}$ in the case $(A_2^6)$ is a nodal rational curve. Up to an automorphism of $\bP(1,2,3)$, the equations of curves in the cases $(A_2^6)$ and $(I_2^2)$ have the following forms:
\begin{enumerate}[label={\rm(\arabic*)}]
    \item In the case $(A_2^6)$,
    \begin{align*}
        C_{(1)}=(x_1=0),
        \qquad
        C_{(6)}=\left(x_3^2-(x_2-x_1^2)^2(x_2+2x_1^2)=0\right).
    \end{align*}
    Moreover, if we set $u:=x_2-x_1^2$, then we may write the equation of $C_{(6)}$ as
    \begin{align*}
        x_3^2-3x_1^2u^2-u^3=0.
    \end{align*}
    \item In the case $(I_2^2)$,
    \begin{align*}
        C_{(3)}=(x_3=0),
        \qquad
        C_{(4)}=\left(x_2^2-x_1^4-x_1x_3=0\right).
    \end{align*}
\end{enumerate}
\end{lemma}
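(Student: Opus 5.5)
We will use the automorphism group of $\bP(1,2,3)$, which consists of the substitutions
\begin{align*}
x_1\mapsto a x_1,\qquad x_2\mapsto b x_2+c x_1^2,\qquad x_3\mapsto d x_3+e x_1x_2+f x_1^3,
\end{align*}
with $a,b,d\in\bC^*$, and bring each equation in Theorem \ref{thm:rank-one-exceptional} to the stated form.

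\textbf{Case $(I_2^2)$.} Here $C_{(3)}=(x_3=0)$ and $C_{(4)}=(x_2^2-\alpha_1x_1^4-\alpha_2x_1^2x_2-x_1x_3=0)$.

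1. Substitute $x_3\mapsto x_3-\alpha_1x_1^3-\alpha_2x_1x_2$. This absorbs the terms $\alpha_1x_1^4+\alpha_2x_1^2x_2$ into $x_1x_3$, so $C_{(4)}$ becomes $(x_2^2-x_1x_3=0)$. However, $C_{(3)}$ becomes $(x_3-\alpha_1x_1^3-\alpha_2x_1x_2=0)$. Because of this, we keep the curve $C_{(3)}=(x_3=0)$ fixed and only allow automorphisms preserving it.

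2. These are $x_3\mapsto dx_3$, $x_2\mapsto bx_2+cx_1^2$, $x_1\mapsto ax_1$. Substituting and dividing by $b^2$, the quartic becomes
\begin{align*}
x_2^2+\left(2c/b-\alpha_2a^2/b\right)x_1^2x_2+\left(\ldots\right)x_1^4-(ad/b^2)x_1x_3.
\end{align*}
Choose $c$ so that the $x_1^2x_2$ coefficient vanishes. The new $x_1^4$ coefficient is then a multiple of $\alpha_1+\alpha_2^2/4$, up to scaling.

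3. If $\alpha_1+\alpha_2^2/4\neq0$, scale $a$ to make this coefficient $-1$, then choose $d$ so that $ad/b^2=1$. This gives $C_{(4)}=(x_2^2-x_1^4-x_1x_3=0)$.

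4. If $\alpha_1+\alpha_2^2/4=0$, the quartic reduces to $(x_2-\tfrac{\alpha_2}{2}x_1^2)^2-x_1x_3$. This case must be excluded. Most likely $C_{(4)}$ is then tangent to $C_{(3)}$ to higher order, or the pair fails the $\frac17$-lt or exceptionality conditions. For example, $C_{(3)}\cap C_{(4)}$ would then be a single non-transversal point, contradicting the normal crossing conclusion of Theorem \ref{thm:rank-one-exceptional}. Checking this is routine: on $x_3=0$ the quartic restricts to $(x_2-\tfrac{\alpha_2}{2}x_1^2)^2$, a double point. So this degenerate case contradicts the normal crossing conclusion.

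\textbf{Case $(A_2^6)$.} Here $C_{(1)}=(x_1=0)$, and $C_{(6)}$ is a sextic. After completing the square in $x_3$ (using $x_3\mapsto x_3+ex_1x_2+fx_1^3$), it has the form $x_3^2=g(x_1,x_2)$, where $g$ is a weighted cubic in $x_2$ with $x_1^2$ terms. The coefficient of $x_2^3$ is nonzero, since otherwise $C_{(6)}$ would pass through the singular point $[0{:}0{:}1]$ or $[0{:}1{:}0]$ badly, contradicting $\mathrm{Sing}(S)\subseteq C_{(1)}$ together with normal crossings.

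1. The hypothesis that $C_{(6)}$ is nodal rational means $g$, viewed as a binary cubic in $(x_1^2,x_2)$, has exactly one double root and no triple root. A triple root gives a cusp; three distinct roots give an elliptic curve.

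2. Write $g=\lambda(x_2-rx_1^2)^2(x_2-sx_1^2)$ with $r\neq s$. The roots lie away from $x_1=0$ because the $x_2^3$ coefficient is nonzero.

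3. The affine map $x_2\mapsto\beta x_2+\gamma x_1^2$ of the $x_1^2$-line acts on the pair of roots $(r,s)$. Choose it to send $(r,s)$ to $(1,-2)$. Then rescale $x_3$ to make $\lambda=1$. This gives $x_3^2-(x_2-x_1^2)^2(x_2+2x_1^2)$.

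4. Setting $u=x_2-x_1^2$ gives $x_2+2x_1^2=u+3x_1^2$. Expanding yields $x_3^2-3x_1^2u^2-u^3$.

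The main obstacle is justifying the excluded degenerate sub-cases in each normal form. These are the vanishing discriminant in $(I_2^2)$ and the leading-coefficient nonvanishing in $(A_2^6)$. Each should be ruled out by the normal crossings and $\mathrm{Sing}(S)\subseteq C_{(1)}$ conditions of Theorem \ref{thm:rank-one-exceptional}.
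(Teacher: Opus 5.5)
Your proposal is correct and follows essentially the same route as the paper: in $(A_2^6)$ you complete the square in $x_3$, use the nodal hypothesis to get one double and one simple root of the binary cubic, move these roots to $1$ and $-2$ by an affine change of $x_2$, and rescale; in $(I_2^2)$ you complete the square in $x_2$ and exclude $\alpha_1+\frac{\alpha_2^2}{4}=0$ because $C_{(3)}$ and $C_{(4)}$ would then be tangent, which is exactly the paper's use of the two intersection points being distinct. One small point to tidy up: completing the square needs the $x_3^2$-coefficient to be nonzero, and this comes from $C_{(6)}$ avoiding $[0:0:1]$ (just as the $x_2^3$-coefficient comes from avoiding $[0:1:0]$), i.e.\ from $C_{(1)}\cap C_{(6)}$ lying in the smooth locus of $S$, not from the normal-crossing condition, which Theorem~\ref{thm:rank-one-exceptional} only asserts at smooth points.
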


\begin{proof}
We first consider the case $(A_2^6)$. By Theorem \ref{thm:rank-one-exceptional}, we may choose weighted coordinates $x_1,x_2,x_3$ of weights $1,2,3$ such that $C_{(1)}=(x_1=0)$ and $C_{(6)}$ is defined by
\begin{align*}
    ax_3^2+x_3Q_3(x_1,x_2)+Q_6(x_1,x_2)=0,
\end{align*}
where $Q_k$ is weighted homogeneous of degree $k$. By Theorem~\ref{thm:rank-one-exceptional}, both singular points of $S$ lie on $C_{(1)}$ and all intersection points of $C_{(1)}$ and $C_{(6)}$ lie in the smooth locus of $S$. It follows that the curve $C_{(6)}$ avoids the two singularities $[0:1:0]$ and $[0:0:1]$ and hence the coefficients of $x_3^2$ and $x_2^3$ in the above equation are nonzero.

After rescaling, we may assume that $a=1$. Replacing $x_3$ by $x_3-\frac{1}{2}Q_3(x_1,x_2)$ gives an equation $x_3^2=P_3(x_1^2,x_2)$, where $P_3$ is a binary cubic.

Since $Q_3(x_1,x_2)$ has weighted degree $3$ and $x_2$ has weight $2$, $Q_3(x_1,x_2)$ is divisible by $x_1$. Hence this coordinate change maps $[0:x_2:x_3]$ to $[0:x_2:x_3-Q_3(0,x_2)]=[0:x_2,x_3]$ in the new coordinates. Hence $(x_1=0)$ is invariant under this coordinate change, so the coordinates of two singularities remain $[0:1:0]$ and $[0:0:1]$. It follows that the coefficient of $x_2^3$ in $P_3$ is nonzero.

The singular points of the curve $x_3^2=P_3(x_1^2,x_2)$ correspond to the multiple roots of $P_3$. Since $C_{(6)}$ has a node, while a triple root would give a cusp, the binary cubic $P_3$ has one double root and one distinct simple root. Since the coefficient of $x_2^3$ in $P_3$ is nonzero, we may write
\begin{align*}
    P_3(x_1^2,x_2)=\lambda(x_2-\rho x_1^2)^2(x_2-\sigma x_1^2),
\end{align*}
for some $\lambda\neq0$ and distinct $\rho,\sigma \in \bC$. 

Choose
\begin{align*}
    c:=\frac{3}{\rho-\sigma},
    \qquad
    d:=1-c\rho
      =-\frac{2\rho+\sigma}{\rho-\sigma}.
\end{align*}
Then $c\neq 0$ and
\begin{align*}
    c\rho+d=1,
    \qquad
    c\sigma+d=-2.
\end{align*}
Replacing $x_2$ by $c^{-1}(x_2-dx_1^2)$ gives
\begin{align*}
    P_3\left(
        x_1^2,
        c^{-1}(x_2-dx_1^2)
    \right)=
    \lambda c^{-3}
    (x_2-x_1^2)^2
    (x_2+2x_1^2).
\end{align*}
Thus this weighted coordinate change sends the double root $[1:\rho:0]$ and the simple root $[1:\sigma:0]$ to $[1:1:0]$ and $[1:-2:0]$, respectively. The equation becomes
\begin{align*}
    x_3^2=(x_2-x_1^2)^2(x_2+2x_1^2).
\end{align*}
These coordinate changes leave $x_1=0$ unchanged and hence preserve $C_{(1)}$.

We next consider the case $(I_2^2)$. By Theorem \ref{thm:rank-one-exceptional}, we may choose weighted coordinates $x_1,x_2,x_3$ of weights $1,2,3$ such that
\begin{align*}
    C_{(3)}=(x_3=0), \quad C_{(4)}=\left(x_2^2-\alpha_2x_1^2x_2-\alpha_1x_1^4-x_1x_3=0\right),
\end{align*}
where $(\alpha_1,\alpha_2)\neq (0,0)$. By Theorem \ref{thm:rank-one-exceptional}, the intersection points of $C_{(3)}$ and $C_{(4)}$ lie on the smooth locus of $S$ and in fact on the chart $(x_1\neq 0)$. Thus these points correspond to the roots of
\begin{align*}
    t^2-\alpha_2t-\alpha_1=0,
    \qquad
    t:=\frac{x_2}{x_1^2}.
\end{align*}
Since these two points are distinct, $\alpha_2^2+4\alpha_1\neq0$. Set $\gamma:=\alpha_1+\frac{\alpha_2^2}{4}$.

Then $\gamma\neq0$. Replacing $x_2$ by $x_2-\frac{\alpha_2}{2}x_1^2$ transforms the equation of $C_{(4)}$ into
\begin{align*}
    x_2^2-\gamma x_1^4-x_1x_3=0.
\end{align*}
Finally, replacing $x_2,x_3$ by $\sqrt{\gamma}x_2, \gamma x_3$ and then dividing the equation by $\gamma$, gives $x_2^2-x_1^4-x_1x_3=0$. These coordinate changes leave $x_3=0$ unchanged and hence preserve $C_{(3)}$.
\end{proof}

\subsection{Toroidal surface pairs}

In this section, we collect the notation and basic facts on toric varieties and toroidal surface pairs. We refer the reader to \cite{CoxLittleSchenck11Toricvarieties} for toric varieties, and to \cite{Nakayama17toric} for toroidal surface pairs and toroidal morphisms. Throughout this paper, by a cone we mean a strongly convex rational polyhedral cone.

\begin{definition}[Toroidal pairs and toroidal morphisms, \upshape{\cite[Definitions~3.12, 3.13 and 4.19]{Nakayama17toric}}]\label{def:toroidal-local-model}
Let $X$ be a normal surface and $D$ a reduced divisor. 
\begin{itemize}
    \item[(1)] For a closed point $x$, the pair $(X, D)$ is said to be \emph{toroidal at} $x$ if there exists an affine toric variety $X_\sigma$ and two \'etale morphisms 
    \begin{align*}
        (X,x)\xleftarrow{\alpha} (V,v)
        \xrightarrow{\beta} (X_\sigma,t),\qquad t=\beta(v)
    \end{align*}
    with a point $v\in V$ lying over $x$ such that $\alpha^{-1}(X\setminus D)=\beta^{-1}(T_\sigma)$ for the open torus $T_\sigma$ of $X_\sigma$. We call the pair $(X_\sigma,t)$ a \emph{local model} of $(X, D)$ at $x$.
    \item[(2)] The pair $(X, D)$ is said to be \emph{toroidal along} a subset $Z$ of $X$ if $(X, D)$ is toroidal at each closed point of $Z$. If $(X, D)$ is toroidal along $X$, then $(X, D)$ is said to be \emph{toroidal}.
    \item[(3)] An \emph{\'etale neighbourhood} of $(X, x)$ is a pair $(V, v)$ of a variety $V$ and a closed point $v\in V$ together with an \'etale morphism $\alpha\colon V\to X$ such that $x=\alpha(v)$ and $\alpha$ induces an isomorphism $k(x)\simeq k(v)$ of residue fields. Since we work over $\bC$, the last condition holds automatically.
    \item[(4)] A proper birational morphism $h\colon Y\to X$ from another normal surface $Y$ is called a \emph{toroidal morphism}(a \emph{toroidal blowing up} in Nakayama's terminology)) with respect to $(X, D)$ if the following conditions are satisfied:
        \begin{itemize}
            \item $h(\Exc(h))\subseteq D$;
            \item $(X, D)$ is toroidal along $h(\Exc(h))$, and $(Y, D_Y)$ is toroidal along $\Exc(h)$, where $D_Y:=h_*^{-1}D+\Exc(h)$;
            \item $K_Y+D_Y=h^*(K_X+D)$.
        \end{itemize}
\end{itemize}
\end{definition}

\begin{proposition}[\upshape{\cite[Proposition~4.21(1)(4)]{Nakayama17toric}}]\label{prop:toroidal-local-description}
Let $(X,D)$ be a normal toroidal surface pair, and let $h\colon Y\to X$ be a proper birational morphism from a normal surface $Y$. Assume that $h$ is an isomorphism over $X\setminus\Supp D$, and set $D_Y:=h_*^{-1}D+\Exc(h)$. Then the following conditions are equivalent:
\begin{enumerate}
    \item[(1)] The morphism $h\colon (Y,D_Y)\to (X,D)$ is toroidal.

    \item[(2)] For every point $y\in D_Y$, setting $P:=h(y)$, there exist
    \begin{itemize}
        \item an \'etale neighborhood $\alpha\colon (V,v)\to (X,P)$;
        \item a proper birational toric morphism $\varphi\colon Z'\to Z$ of two dimensional toric surfaces;
        \item an \'etale morphism $\beta\colon V\to Z$ such that
        \[
            \alpha^{-1}(D)=\beta^{-1}(D_Z)
        \]
        as reduced divisors, where $D_Z$ denotes the toric boundary of $Z$;
        \item a Cartesian diagram
        \[
        \begin{CD}
            Y_V @>{h_V}>> V\\
            @V{\gamma}VV @VV{\beta}V\\
            Z' @>{\varphi}>> Z,
        \end{CD}
        \]
        where $Y_V:=Y\times_X V$ and $h_V\colon Y_V\to V$ is the base change of $h$.
    \end{itemize}
\end{enumerate}
\end{proposition}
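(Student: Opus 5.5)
The statement is local over $\Supp D$ and invariant under étale base change, so I plan to prove both implications by reducing to the toric model $Z=X_\sigma$ and then using two facts about normal surfaces. The first is that a normal surface proper and birational over a fixed base is determined by the set of divisorial valuations it contracts. The second is that the log canonical places of a toric pair $(Z,D_Z)$ are exactly the toric valuations.

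\emph{(2)$\Rightarrow$(1).}
\begin{enumerate}
\item A proper birational toric morphism $\varphi\colon Z'\to Z$ is toroidal. Its exceptional curves are torus-invariant, so $\varphi(\Exc\varphi)\subseteq D_Z$. Moreover $D_{Z'}=\varphi_*^{-1}D_Z+\Exc(\varphi)$ and $K_{Z'}+D_{Z'}=\varphi^*(K_Z+D_Z)$, because both sides equal minus the sum of all toric boundary divisors.
\item Since $\beta$ and $\gamma$ are étale and the square is Cartesian, we have $\gamma^{-1}(D_{Z'})=h_V^{-1}(\alpha^{-1}D)_{\mathrm{red}}$. This equals the pullback of $h_*^{-1}D+\Exc(h)$, since $h$ is an isomorphism off $\Supp D$.
\item Canonical divisors and exceptional loci are compatible with étale base change. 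Hence crepancy $K_{Y_V}+D_{Y_V}=h_V^*(K_V+D_V)$ holds on $Y_V$.
\item Toroidality of $(Y_V,D_{Y_V})$ along $\Exc(h_V)$ is witnessed by $\gamma$ together with the toric charts of $Z'$.
\item All three conditions in Definition~\ref{def:toroidal-local-model}(4) are local in the étale topology on $X$, and the neighbourhoods $V$ cover $\Supp D$. They therefore descend to $h$.
\end{enumerate}

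\emph{(1)$\Rightarrow$(2).} Fix $y$ and $P=h(y)$, and take a local model $(X,P)\xleftarrow{\alpha}(V,v)\xrightarrow{\beta}(Z,t)$ with $Z=X_\sigma$, shrinking $V$ as needed.
\begin{enumerate}
\item Form $Y_V=Y\times_XV$. It is normal, proper and birational over $V$, an isomorphism off $\alpha^{-1}D$, and crepant: $K_{Y_V}+D_{Y_V}=h_V^*(K_V+D_V)$.
\item Consequently every $h_V$-exceptional prime $E$ satisfies $a(E,V,D_V)=0$, that is, $E$ is an lc place of $(V,D_V)$.
\item Through the étale map $\beta$, such $E$ corresponds to an lc place of $(Z,D_Z)$ centred at $t$. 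I would show that these are exactly the toric valuations given by primitive vectors in the relative interior of $\sigma$. To do this, take a toric log resolution: $(Z,D_Z)$ is log smooth there with reduced boundary, and the only divisors of log discrepancy $0$ over a snc pair with reduced boundary are obtained by successively blowing up intersection points of boundary components. Each such blow-up is again toric.
\item Let $\Sigma$ be the fan obtained by subdividing $\sigma$ along the finitely many rays so produced, put $Z'=X_\Sigma$, and let $Y'_V:=Z'\times_ZV$.
\item By the (2)$\Rightarrow$(1) computation, $Y'_V$ is normal and proper birational over $V$, with exactly the same exceptional valuations as $Y_V$.
\item The birational map $Y_V\dashrightarrow Y'_V$ over $V$ then contracts no curve in either direction. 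By Zariski's main theorem (for instance, applied to the normalized graph) it is an isomorphism. This yields the Cartesian square.
\end{enumerate}

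The main obstacle is the identification of the lc places of $V$ with toric valuations in a way compatible with the étale maps. A divisor over $V$ need not be pulled back from $Z$ a priori. To handle this, I would first note that the centre of $E$ is the point $v$, since $E$ is exceptional with centre in $\alpha^{-1}D$. Valuations centred at $v$ correspond to those centred at $t$ because $\beta$ induces an isomorphism of completed local rings. I would then argue with the toric log resolution, which is pulled back along $\beta$, so that the dual-graph description of lc places transfers verbatim to $V$. A second, minor, point is to check that the exceptional rays are finite in number and lie in $\sigma$, so that $\Sigma$ is a genuine subdivision.
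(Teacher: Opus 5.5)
The paper does not prove this proposition; it quotes it from Nakayama, so there is no internal argument to match. Your proof is correct and self-contained.

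Your (1)$\Rightarrow$(2) half uses the same mechanism the paper runs in Lemma~\ref{lem:crepant-toroidal}:
\begin{itemize}
\item crepancy fixes the log discrepancy of every exceptional divisor;
\item following a blow-up sequence over a log resolution, the first blow-up at a point not lying on two boundary components makes the coefficients of all subsequent exceptional divisors in the chain non-positive (with reduced boundary you get $\le 0$ instead of the paper's $<0$, which still contradicts $a(E,V,D_V)=0$);
\item the toric model with the same rays is then identified with $h_V$, because normal surfaces admit no nontrivial small birational maps.
\end{itemize}
The paper then invokes the (2)$\Rightarrow$(1) direction of this very proposition as a black box. You instead verify it directly, by étale descent of the three conditions in Definition~\ref{def:toroidal-local-model}(4).

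A by-product of your route is that the clause ``$(Y,D_Y)$ is toroidal along $\Exc(h)$'' in (1) is never used for (1)$\Rightarrow$(2). Crepancy, together with $h$ being an isomorphism off $\Supp D$, already forces (2).

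When writing it up, make two reductions explicit.
\begin{itemize}
\item Shrink $V$ so that $\alpha^{-1}(P)=\beta^{-1}(t)=\{v\}$ and $h_V$ is an isomorphism over $V\setminus\{v\}$. Otherwise $Z'\times_Z V$ acquires exceptional curves over other points of $\beta^{-1}(t)$, or $Y_V$ over other points of $\alpha^{-1}(P)$, and the comparison of exceptional valuations in your step~5 fails.
\item If $t$ is not a torus-fixed point (e.g.\ $P$ lies on a single branch of $D$), pass to the affine chart of the orbit of $t$. There are then no lc places centred at $t$, so $h$ is an isomorphism over $P$ and one takes $\varphi=\mathrm{id}$. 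This replaces the phrase ``toric valuations in the relative interior of $\sigma$'', which is not accurate in that case.
\end{itemize}
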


\begin{remark}\label{rem:toroidal--computations}
By Proposition~\ref{prop:toroidal-local-description}, a toroidal birational morphism $h\colon Y\to X$ of normal surfaces is, \'etale locally on $X$, obtained by base change from a proper birational toric morphism of toric surfaces. In particular, its local model over a zero-dimensional stratum $x$ of $(X,D)$ is described by a subdivision of the corresponding two-dimensional cone.

Consequently, any computation depending only on the local structure of $(X, B)$ over $x$ may be carried out in the corresponding local model; for example, one may compute log discrepancies, intersection numbers involving exceptional curves over $x$, and orders of functions or sections along these curves.
\end{remark}

We now make this local description explicit and fix the notation used below.

\begin{notation}\label{not:local-toric}
Fix a zero-dimensional stratum $P$ of a toroidal surface pair $(X,D)$. Choose a local model $(X_\sigma, t)$ at $P$, together with a common \'etale neighborhood
\begin{align*}
    (X,P)\xleftarrow{\alpha}(V,v)
    \xrightarrow{\beta}(X_\sigma,t).
\end{align*}
Since $P$ is a zero-dimensional stratum, $t$ is the torus-fixed point of $X_\sigma$ corresponding to the two-dimensional cone $\sigma$. After shrinking $V$ around $v$, we may assume that $V$ is irreducible and $\alpha^{-1}(P)=\beta^{-1}(t)=\{v\}$.

After choosing an identification $N\simeq\mathbb Z^2$, \cite[Proposition~10.1.1]{CoxLittleSchenck11Toricvarieties} allows us to write
\begin{align*}
    \sigma =\bR_{\geq0}u_0+\bR_{\geq0}u_{n+1}, \quad u_0=(r,-s), u_{n+1}=(0,1)
\end{align*}
where $0\leq s<r, \gcd(r,s)=1$. Let $\cD_0$ and $\cD_{n+1}$ be the invariant prime divisors on $X_\sigma$ corresponding to $\bR_{\geq 0}u_0$ and $\bR_{\geq 0}u_{n+1}$, respectively. Under the chosen local model, these divisors correspond to the two analytic branches of $D$ at $P$. These two branches may come from the same irreducible curve on $X$.

Let $u_i=(p_i,q_i), ~1\leq i\leq n$ be primitive vectors in the interior of $\sigma$, ordered by increasing slope. Let $\Sigma$ be the subdivision of $\sigma$ obtained by adding the rays $\mathbb R_{\geq0}u_i$. For each $i$, let $E_i$ denote the prime divisor over $X$ corresponding, via the chosen local model, to the torus-invariant prime divisor associated with the ray $\mathbb R_{\geq0}u_i$.
\end{notation}

\begin{lemma}\label{lem:local-toric-discrepancy}
Notation as in Notation~\ref{not:local-toric}. Let $\Delta$ be a boundary on $X$ supported on $D$. Suppose that, in the chosen local model, the germ of $\Delta$ at $P$ corresponds to
\begin{align*}
    b_0\mathcal D_0+b_{n+1}\mathcal D_{n+1},
    \qquad
    0\leq b_0,b_{n+1}\leq1.
\end{align*}
Let $u=(p,q)$ be a primitive vector in the interior of $\sigma$, and $E$ be the prime divisor over $X$ determined by the ray $\mathbb R_{\geq0}u$. Then
\begin{align}\label{eq:local-toric-discrepancy}
    a(E,X,\Delta)=\frac{p}{r}(1-b_0) +\frac{sp+rq}{r}(1-b_{n+1}).
\end{align}
\end{lemma}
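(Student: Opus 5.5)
The plan is to reduce the statement to a computation on the affine toric surface $X_\sigma$ and then use the piecewise-linear description of orders of vanishing of torus-invariant divisors.

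\emph{Step 1: reduction to the local model.} Let $h\colon Y\to X$ be the toroidal morphism obtained (via Proposition~\ref{prop:toroidal-local-description} and Remark~\ref{rem:toroidal--computations}) from the toric subdivision of $\sigma$ by the ray $\bR_{\geq0}u$. Log discrepancies are determined by the pullback $h^*(K_X+\Delta)$ near $E$, and both sides are compatible with the étale morphisms $\alpha,\beta$. Hence we may compute $a(E,X,\Delta)$ on $X_\sigma$ with boundary $b_0\cD_0+b_{n+1}\cD_{n+1}$, with $E$ the invariant divisor of $\bR_{\geq0}u$ on the toric blow-up.

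\emph{Step 2: splitting off the toric boundary.} On $X_\sigma$ the toric boundary is $D_\sigma:=\cD_0+\cD_{n+1}$. Since $K+D_\sigma$ is trivial on toric varieties, equivalently any toric blow-up is crepant for $(X_\sigma,D_\sigma)$, we have $a(E,X_\sigma,D_\sigma)=0$. Writing
\begin{align*}
    b_0\cD_0+b_{n+1}\cD_{n+1}=D_\sigma-(1-b_0)\cD_0-(1-b_{n+1})\cD_{n+1},
\end{align*}
linearity of the pullback gives
\begin{align*}
    a(E,X_\sigma,\Delta)=\ord_E\bigl((1-b_0)\cD_0+(1-b_{n+1})\cD_{n+1}\bigr).
\end{align*}
Here $\cD_0$ and $\cD_{n+1}$ are $\bQ$-Cartier because $X_\sigma$ is a simplicial surface.

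\emph{Step 3: the linear function on $\sigma$.} For a $\bQ$-Cartier torus-invariant divisor $\sum c_\rho \cD_\rho$ on the affine toric variety $X_\sigma$, there is a unique $m\in M_\bQ$ with $\langle m,u_\rho\rangle=c_\rho$ for the rays of $\sigma$. Its pullback to the subdivision has coefficient $\langle m,u\rangle$ along the divisor of $u$; this follows since $\divisor(\chi^{m})=\sum\langle m,u_\rho\rangle\cD_\rho$ over all rays of the fan, applied to a multiple $km\in M$. So we solve for $\varphi(x,y)=\alpha x+\beta y$ with
\begin{align*}
    \varphi(0,1)=1-b_{n+1},\qquad \varphi(r,-s)=1-b_0.
\end{align*}
This gives $\beta=1-b_{n+1}$ and $\alpha=\bigl(1-b_0+s(1-b_{n+1})\bigr)/r$. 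Evaluating at $u=(p,q)$ yields
\begin{align*}
    \varphi(p,q)=\frac{p}{r}(1-b_0)+\frac{sp+rq}{r}(1-b_{n+1}),
\end{align*}
which is \eqref{eq:local-toric-discrepancy}.

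The only delicate point is Step 1: checking that the étale correspondence of Notation~\ref{not:local-toric} transports the boundary germ and log discrepancies faithfully. This holds because discrepancies are computed from pullbacks of divisors, which commute with flat (étale) base change and with the Cartesian diagram of Proposition~\ref{prop:toroidal-local-description}. Steps 2 and 3 are a direct calculation.
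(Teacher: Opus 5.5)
Your proposal is correct and follows the paper's proof: after reducing to the local model, $a(E,X,\Delta)$ is the linear function on $\sigma$ taking the values $1-b_0$ and $1-b_{n+1}$ on $u_0=(r,-s)$ and $u_{n+1}=(0,1)$, evaluated at $u=(p,q)$. The only difference is that the paper cites Ambro for this linearity, while you derive it yourself from the crepancy of toric blow-ups for $(X_\sigma,D_\sigma)$ and the formula $\divisor(\chi^m)=\sum\langle m,u_\rho\rangle\mathcal D_\rho$.
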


\begin{proof}
By Remark~\ref{rem:toroidal--computations}, it suffices to compute the log discrepancy in the corresponding local model. Hence $a(E,X,\Delta)$ is the value at $u$ of the linear function $\psi$ on $\sigma$ determined by
\begin{align*}
    \phi(u_0)=1-b_0, \qquad \phi(u_{n+1})=1-b_{n+1},
\end{align*}
see \cite[Section~2, p.~360]{Ambro06toricmld}. Since
\begin{align*}
    (p,q)=\frac{p}{r}(r,-s)+\frac{sp+rq}{r}(0,1),
\end{align*}
formula \eqref{eq:local-toric-discrepancy} follows by linearity.
\end{proof}

We next record the intersection number formulas for the exceptional curves of a toroidal morphism over a zero-dimensional stratum.

\begin{lemma}\label{lem:toric-intersections}
Notation as in Notation~\ref{not:local-toric}. Let $h\colon Y\to X$ be a toroidal birational morphism whose local toric model over $P$ is $X_\Sigma\longrightarrow X_\sigma$. Assume that $n\geq1$. Then $E_1,\ldots,E_n$ are precisely the $h$-exceptional prime divisors centered at $P$, and
\begin{align*}
    E_i^2&=-\frac{\det(u_{i-1},u_{i+1})}{\det(u_{i-1},u_i)\det(u_i,u_{i+1})}, &&1\leq i\leq n,\\
    E_i\cdot E_{i+1}&=\frac{1}{\det(u_i,u_{i+1})}, &&1\leq i\leq n-1,\\
    E_i\cdot E_j&=0, &&|i-j|\geq2.
\end{align*}
Here the determinants are computed with respect to the chosen identification $N\simeq\mathbb Z^2$.

Moreover, let $D_0$ and $D_{n+1}$ be the two analytic branches of $D$ at $P$ corresponding to $\cD_0$ and $\cD_{n+1}$, respectively. let $D_{0,Y}$ and $D_{n+1,Y}$ denote their strict transforms near $h^{-1}(P)$. Then
\begin{align*}
    D_{0,Y}\cap\Exc(h)&=D_{0,Y}\cap E_1=\{Q_0\},\\
    D_{n+1,Y}\cap\Exc(h)&=D_{n+1,Y}\cap E_n=\{Q_{n+1}\},
\end{align*}
and
\begin{align*}
    (D_{0,Y}\cdot E_1)_{Q_0}=\frac{1}{\det(u_0,u_1)},\qquad
    (D_{n+1,Y}\cdot E_n)_{Q_{n+1}}=\frac{1}{\det(u_n,u_{n+1})}.
\end{align*}
\end{lemma}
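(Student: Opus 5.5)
The plan is to reduce everything to the local toric model, as permitted by Remark~\ref{rem:toroidal--computations}, and then invoke the standard intersection theory on toric surfaces. By Proposition~\ref{prop:toroidal-local-description}, over an étale neighbourhood $(V,v)$ of $P$ the morphism $h$ is the base change of the toric morphism $\varphi\colon X_\Sigma\to X_\sigma$. This local model is fixed in Notation~\ref{not:local-toric}.

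The first step identifies the exceptional divisors. Since $\beta^{-1}(t)=\{v\}$ and $\varphi$ is an isomorphism outside $t$, the $h$-exceptional curves over $P$ correspond, via the Cartesian diagram, to the $\varphi$-exceptional torus-invariant curves. These are exactly the curves attached to the interior rays $\mathbb R_{\geq0}u_i$, $1\le i\le n$, so they are $E_1,\dots,E_n$. Because $\gamma$ is étale, intersection numbers of curves supported over $v$ can be computed on $X_\Sigma$. To justify this I would note two facts. First, $\mathbb Q$-Cartier pullbacks commute with étale base change. Second, for compact curves $E,E'$ in $h^{-1}(P)$, the local intersection multiplicities at each point are preserved, since $\gamma$ is an étale isomorphism of analytic germs near each point of $\gamma^{-1}$ of the exceptional fibre. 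It is also injective over the fibre, because the fibre over $v$ maps isomorphically onto $\varphi^{-1}(t)$.

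The second step is the toric computation on $X_\Sigma$, using \cite{CoxLittleSchenck11Toricvarieties}. Adjacent rays $u_i,u_{i+1}$ span a two-dimensional cone $\tau_i$ whose torus-fixed point is a cyclic quotient singularity of index $\det(u_i,u_{i+1})$. The corresponding invariant curves meet transversally in the orbifold sense there, which gives $E_i\cdot E_{i+1}=1/\det(u_i,u_{i+1})$. Non-adjacent curves are disjoint, so $E_i\cdot E_j=0$ for $|i-j|\ge2$.

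For the self-intersection I would use the linear relation $\operatorname{div}(\chi^m)=\sum_j\langle m,u_j\rangle \mathcal D_j$ for $m\in M$. Choose $m$ with $\langle m,u_i\rangle=0$; such a rational $m$ exists, perpendicular to $u_i$, and clearing denominators is harmless. Intersecting with $E_i$ then gives
\begin{align*}
\langle m,u_{i-1}\rangle E_{i-1}\cdot E_i+\langle m,u_{i+1}\rangle E_{i+1}\cdot E_i+\langle m,u_i\rangle E_i^2=0.
\end{align*}
This does not isolate $E_i^2$ directly, so instead I would pick $m$ with $\langle m,u_i\rangle=1$ and use the wall relation. There is a unique rational relation $\det(u_i,u_{i+1})u_{i-1}+\det(u_{i-1},u_i)u_{i+1}=\det(u_{i-1},u_{i+1})u_i$, which one verifies by Cramer's rule. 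Pairing the relation between divisors with $E_i$ and substituting the two transversal intersections yields
\begin{align*}
E_i^2=-\frac{\det(u_{i-1},u_{i+1})}{\det(u_{i-1},u_i)\det(u_i,u_{i+1})}.
\end{align*}
The cases $i=1$ and $i=n$ use the boundary rays $u_0,u_{n+1}$, whose divisors $\mathcal D_0,\mathcal D_{n+1}$ are non-compact. I would therefore localize by working on the affine toric variety, where $\operatorname{div}(\chi^m)$ is still principal and $E_i$ is compact, so the pairing is legitimate.

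The last assertions follow in the same way. The strict transform of $\mathcal D_0$ is the invariant curve of the ray $u_0$, which meets only the curve of the adjacent ray $u_1$. It does so at the single fixed point of the cone spanned by $u_0$ and $u_1$, with local intersection $1/\det(u_0,u_1)$, and symmetrically for $\mathcal D_{n+1}$ and $E_n$. Pulling back along $\gamma$ gives the points $Q_0$ and $Q_{n+1}$.

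I expect the main obstacle to be the careful justification that the intersection numbers transfer from $X_\Sigma$ to $Y$ through the étale maps, especially since $\mathcal D_0$ and $\mathcal D_{n+1}$ may be two branches of one global curve. The key point is that all curves involved are compact and lie over $v$, so only local intersection multiplicities at finitely many points are needed, and these are étale-invariant. The toric formulas themselves are standard.
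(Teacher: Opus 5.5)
Your proposal is correct and follows essentially the same route as the paper. You pass to the local toric model $X_\Sigma\to X_\sigma$ through the common \'etale neighbourhood, using $\alpha^{-1}(P)=\beta^{-1}(t)=\{v\}$ to identify the exceptional fibres, and then apply the standard intersection formulas for torus-invariant curves; the only difference is that you derive the self-intersection formula yourself from $\operatorname{div}(\chi^m)=\sum_j\langle m,u_j\rangle\mathcal D_j$ and the wall relation, whereas the paper cites these formulas from \cite{Shao26p} and \cite{CoxLittleSchenck11Toricvarieties}.
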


\begin{proof}
We first compute on the local model  $X_\Sigma$. The stated formulas are the standard intersection formulas for torus-invariant curves on a toric surface; see \cite[Proposition~2.2 and Lemma~2.3]{Shao26p} or \cite[Section~6.4]{CoxLittleSchenck11Toricvarieties}. Nonadjacent rays do not belong to a common two-dimensional cone, so the corresponding invariant curves are disjoint.

By Proposition~\ref{prop:toroidal-local-description}, we may choose a common étale neighbourhood $(V,v)$ as in Notation~\ref{not:local-toric} such that
\begin{align*}
    Y_V:=Y\times_X V
    \simeq X_\Sigma\times_{X_\sigma}V.
\end{align*}
Since $\alpha^{-1}(P)=\beta^{-1}(t)=\{v\}$, the projections $Y_V\to Y$ and $Y_V\to X_\Sigma$ induce isomorphisms between the corresponding exceptional curves. Their pullbacks agree as curves on $Y_V$, so the intersection formulas also hold on $Y$.

The assertions concerning the two analytic boundary branches follow from the adjacency of the rays and the same local intersection formulas.
\end{proof}

The next lemma shows that a crepant birational morphism under certain assumptions is a toroidal morphism, and hence can be described by subdivisions in compatible local models.

\begin{lemma}\label{lem:crepant-toroidal}
Let $(X,D)$ be a normal toroidal surface pair, and let $\Delta$ be a $\bQ$-boundary supported on $D$. Let $h\colon Y\to X$ be a projective birational morphism between normal surfaces. Assume that 
\begin{itemize}
    \item[(1)] $(X,\Delta)$ is klt and $X\setminus\Supp(\Delta)$ is smooth;
    \item[(2)] $K_Y+\Delta_Y=h^*(K_X+\Delta)$, where $\Delta_Y:=h_*^{-1}\Delta$.
\end{itemize}
Then $h$ is an isomorphism over $X\setminus \Supp(\Delta)$. Moreover, $(Y,D_Y)$ is toroidal and $h\colon(Y,D_Y)\to (X,D)$ is a toroidal morphism, where $D_Y:=h_*^{-1}D+\Exc(h)$.
\end{lemma}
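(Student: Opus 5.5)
We argue in three steps: first show that $h$ is an isomorphism away from $\Supp(\Delta)$, then show that every exceptional curve lies over a zero-dimensional stratum of $(X,D)$, and finally identify $h$ there with a toric subdivision.

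\emph{Step 1: $h$ is an isomorphism over $X\setminus\Supp(\Delta)$.} Let $E$ be an $h$-exceptional prime divisor with centre $x=h(E)\notin\Supp(\Delta)$. Near $x$ the surface $X$ is smooth and $\Delta=0$, so $a(E,X,\Delta)=a(E,X,0)\geq 2$. On the other hand, condition (2) says that $\Delta_Y$ has no exceptional components. Hence
\begin{align*}
    a(E,X,\Delta)=1-\mult_E(\Delta_Y)=1,
\end{align*}
a contradiction. So every exceptional curve of $h$ lies over $\Supp(\Delta)\subseteq D$. Since $Y$ is normal and $h$ is birational with no exceptional divisors over $X\setminus\Supp(\Delta)$, $h$ is finite and birational onto a normal variety there, hence an isomorphism by Zariski's main theorem.

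\emph{Step 2: exceptional curves lie only over zero-dimensional strata.} Every $h$-exceptional $E$ has $a(E,X,\Delta)=1$ and $c_X(E)=x\in\Supp(\Delta)$. Suppose first that $x$ is a point lying on only one branch of $D$. Then the local model is $\bA^1\times(\text{a line})$ with one boundary branch $\cD_0$, or more generally a cyclic quotient in which only one invariant divisor meets the fibre. In the smooth case $\Delta=b_0\cD_0$ with $b_0<1$, and every exceptional divisor over $x$ has $a\geq 2-b_0>1$. The same kind of bound should hold in the non-smooth one-branch model. I expect that a toroidal pair at a point on only one branch of $D$ forces $X$ to be smooth there, since the cone is then $\mathrm{cone}((1,0),(0,1))$ up to taking a product. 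So $a=1$ is impossible, and all exceptional divisors lie over zero-dimensional strata $P$ of $(X,D)$.

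\emph{Step 3: identify $h$ with a toric subdivision over each stratum $P$.} Work in the local model of Notation~\ref{not:local-toric}, with $\Delta$ corresponding to $b_0\cD_0+b_{n+1}\cD_{n+1}$ and $b_i<1$ by klt. Note that $a(E,X,\Delta)=1$ for every $h$-exceptional $E$; this exact equality is what makes the comparison below work.

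The key claim is that every divisor $E$ over $P$ with $a(E,X,\Delta)\leq 1$ is toric, that is, equal to some $E_u$ as in Lemma~\ref{lem:local-toric-discrepancy}. The main tool is the piecewise-linear description of the log discrepancy as $\psi(v_E)$ on the valuation cone. For a non-monomial valuation $v$ over $P$, I would compare it with its monomial retraction $v'$ (the quasi-monomial valuation with the same values on the two coordinate branches). We have $\psi(v)=\psi(v')+\bigl(a(v)-\psi(v')\bigr)$ with strict excess. For a divisorial non-toric $E$ the excess should be at least the log discrepancy of an extra blow-up at a non-torus-fixed point, which is $\geq 1$, while $\psi(v')>0$. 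This gives $a>1$.

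This step is the main obstacle. To make it rigorous, I would pass to a toric log resolution $X_{\Sigma'}\to X_\sigma$ on which $(X_{\Sigma'},\Delta')$ has all coefficients $<1$, and the relevant coefficients $\leq 0$ off the torus-invariant locus. A non-toric divisor is then obtained by blowing up a point of the torus-invariant locus that is not a zero-dimensional stratum, and there the pair is locally $(\text{smooth},c\,\text{line})$ with $c<1$. Hence $a\geq 2-c>1$.

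Once every exceptional $E$ is known to be toric, $Y$ over $P$ is the normal surface whose exceptional divisors are the $E_{u_i}$ for finitely many rays $u_i$. For normal surfaces, a proper birational model over $X$ is determined by its set of exceptional divisors, since it is the normalization of $X$ in the corresponding graph. So $Y_V\simeq X_\Sigma\times_{X_\sigma}V$, where $\Sigma$ is the subdivision by these rays. Proposition~\ref{prop:toroidal-local-description} then shows that $h$ is toroidal, and $(Y,D_Y)$ is toroidal along $\Exc(h)$. Away from $\Exc(h)$, $(Y,D_Y)$ is toroidal because $h$ is an isomorphism there. Finally, $K_Y+D_Y=h^*(K_X+D)$ holds because toric exceptional divisors have log discrepancy $0$ with respect to the full toric boundary.
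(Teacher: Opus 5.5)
Your proposal is correct and follows essentially the same route as the paper. Step 1 is identical. Your rigorous version of Step 3 is exactly the paper's argument, and it also absorbs your Step 2: pass to a toroidal log resolution, locate the first blow-up centre that is not a zero-dimensional stratum, where the pair is (smooth, $c\cdot$line) with $c<1$ so that $a>1$, and then identify $Y_V$ with $X_\Sigma\times_{X_\sigma}V$ because normal surfaces admit no nontrivial small birational maps.

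Two points need tightening. First, the first non-toroidal centre may only appear after further toric blow-ups beyond $X_{\Sigma'}$. Second, the bound $a\geq 2-c$ is needed for every divisor over that point, not just the first blow-up there. The paper supplies this by showing inductively that the coefficient of each later exceptional curve in the crepant boundary stays negative.
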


\begin{proof}
We first show that every $h$-exceptional curve corresponds to a ray in a local toric model of $(X,D)$ at its center. Fix a prime $h$-exceptional curve $E$, and let $x:=c_X(E)$. Since $E$ does not occur in $\Delta_Y$, Condition (2) gives $a(E,X,\Delta)=1$.

Suppose that $x\notin \Supp(\Delta)$. By Condition~{\rm(1)}, $x$ is a smooth point of $X$, and $\Delta$ vanishes near $x$. Hence
\begin{align*}
    a(E,X,\Delta)=a(E,X,0)\geq 2,
\end{align*}
a contradiction. Thus every prime $h$-exceptional curve is centered in $\Supp(\Delta)$. Consequently, $h$ is an isomorphism over $X\setminus \Supp(\Delta)$.

Choose a toroidal log resolution $\pi\colon (X_0,D_0)\to (X,D)$, where $D_0:=\pi_{*}^{-1}D+\Exc(\pi)$. Thus $X_0$ is smooth and $D_0$ is a simple normal crossings divisor, and, in compatible local models, $\pi$ is induced by regular subdivisions of the corresponding cones.

Define $\Delta_0$ by $K_{X_0}+\Delta_0=\pi^*(K_X+\Delta)$. Then $\Supp(\Delta_0)\subseteq D_0$ and every coefficient of $\Delta_0$ is strictly less than $1$ because $(X,\Delta)$ is klt.

If $E$ appears as a prime divisor on $X_0$, then $E$ is a component of $D_0$ and hence corresponds to a ray in a local model of $(X,D)$ at $x$. We may therefore assume that $E$ does not appear on $X_0$.

Extract $E$ by successively blowing up its centers. Thus there is a sequence of blow-ups of closed points
\begin{align*}
    X_n\xrightarrow{\mu_{n-1}}X_{n-1}
    \longrightarrow\cdots\longrightarrow
    X_1\xrightarrow{\mu_0}X_0,
\end{align*}
where $\mu_i$ is the blow-up of the center $x_i:=c_{X_i}(E)$ on $X_i$, and the exceptional divisor of $\mu_{n-1}$ is $E$.

For each $i$, let $\varphi_i\colon X_i\to X_0$ be the induced morphism, and set
\begin{align*}
    D_i:=(\varphi_{i})_{*}^{-1}D_0+\Exc(\varphi_i),\qquad
    K_{X_i}+\Delta_i:= \varphi_i^*(K_{X_0}+\Delta_0).
\end{align*}
Then $D_i$ is a simple normal crossing divisor, $\Supp(\Delta_i)\subseteq D_i$, and every coefficient of $\Delta_i$ is strictly less than $1$.

Suppose that some $x_i$ is not an intersection point of two components of $D_i$, and choose the smallest such $i$. Since $D_i$ is a simple normal crossing divisor, at most one component of $D_i$ passes through $x_i$. Let $d_i$ be its coefficient in $\Delta_i$, and set $d_i=0$ if $x_i\notin D_i$. If $F_{i+1}$ denotes the exceptional divisor of $\mu_i$, then 
\begin{align}\label{eq:first-negative-crepant-coefficient}
    \mathrm{mult}_{F_{i+1}}(\Delta_{i+1})
    =d_i-1<0.
\end{align}

We claim that $\mult_{F_{j}}(\Delta_j)<0$ for every $i+1\leq j\leq n$. The case $j=i+1$ follows from \eqref{eq:first-negative-crepant-coefficient}. Suppose that $i+1\leq j<n$ and $\mult_{F_j}(\Delta_j)<0$. As long as $E$ has not yet been extracted, $x_j\in F_j$. Besides $F_j$, at most one further component of $D_j$ passes through $x_j$. Let $d_j<1$ be its coefficient in $\Delta_j$, and set $d_j=0$ if no such component exists. Then
\begin{align*}
    \mult_{F_{j+1}}(\Delta_{j+1})=\mult_{F_{j}}(\Delta_{j})+d_j-1<0.
\end{align*}
This proves the claim.

Since $E=F_n$, the claim gives $\mult_{E}(\Delta_n)=\mult_{F_{n}}(\Delta_n)<0$. On the other hand,
\begin{align*}
    \mult_E(\Delta_n)=1-a(E,X,\Delta)=0,
\end{align*}
a contradiction. Therefore every $x_i$ is an intersection point of two components of $D_i$. Hence every $\mu_i$ is toroidal. Since $E$ was arbitrary, every prime $h$-exceptional divisor corresponds to a ray in the local model of $(X,D)$ at its center. 

Fix $x\in h(\Exc(h))$. Let $\Sigma$ be the subdivision of the corresponding two-dimensional cone obtained by adding the rays associated with the $h$-exceptional prime divisors centered at $x$. After base change to a common \'etale neighborhood of $x$, the associated toric morphism $X_\Sigma\to X_\sigma$ has the same exceptional prime divisors as $h$. The induced birational map between the two surfaces over $x$ is therefore an isomorphism in codimension one, hence an isomorphism, since normal surfaces admit no nontrivial small birational maps.

Thus, \'etale locally at every point of $h(\Exc(h))$, the morphism $h$ is obtained by base change from a proper birational toric morphism. By Proposition~\ref{prop:toroidal-local-description}, $(Y,D_Y)$ is toroidal and $h\colon (Y,D_Y)\to(X,D)$ is a toroidal morphism.
\end{proof}

\begin{definition}\label{def:orbifold-parameters}
Fix the local model of Notation~\ref{not:local-toric}. Via the common \'etale neighborhood, we identify the analytic germ of the toroidal pair at the distinguished point with the analytic germ at the torus-fixed point of
\begin{align*}
    \left(
        \bA^2_{z_1,z_2}/\mu_r(1,s),0
    \right),
    \qquad
    r\geq1,\quad
    0\leq s<r,\quad
    \gcd(r,s)=1,
\end{align*}
so that the branches of two ordered torus-invariant curves are the images of $(z_1=0)$ and $(z_2=0)$. We allow $r=1$ and $s=0$.

An ordered pair $(f_1,f_2)$ of analytic functions on the orbifold cover is called a system of \emph{ordered boundary-adapted orbifold parameters} if $(f_1,f_2)$ is a regular system of parameters at the origin,
\begin{align*}
    \zeta^*f_1=\zeta f_1,
    \qquad
    \zeta^*f_2=\zeta^s f_2
    \qquad
    (\zeta\in\mu_r),
\end{align*}
and $(f_1=0)=(z_1=0),\qquad (f_2=0)=(z_2=0)$ as reduced analytic germs.

For brevity, we call $(f_1,f_2)$ \emph{orbifold parameters}. When $r=1$ and $s=0$, we simply call $(f_1,f_2)$ \emph{analytic coordinates}.
\end{definition}

The following lemma computes the order of a section in orbifold parameters.

\begin{lemma}\label{lem:toric-orders}
Notation as in Notation \ref{not:local-toric}. Let $E$ be the divisor corresponding to an interior primitive vector $u=(p,q)$ of $\sigma\cap N$ in the fixed local model. Let $(f_1,f_2)$ be orbifold parameters of type $\frac{1}{r}(1,s)$. Let $A$ be a $\bQ$-Cartier Weil divisor near $P$ and let $\tau$ be a nonzero local section of $\cO_X(A)$. Suppose that, on the orbifold cover, $\tau$ is represented by a $\mu_r$-semi-invariant function of degree $d$
\begin{align*}
    f_d=\sum_{\substack{a_1,a_2\geq0\\
a_1+sa_2\equiv d\;(\mathrm{mod}\;r)}}
c_{a_1,a_2}f_1^{a_1}f_2^{a_2}.
\end{align*}
Then 
\begin{align}\label{eq:toric-order-formula}
    \ord_{E}(\tau)=\min \big\{\frac{a_1p+a_2(sp+rq)}{r}|c_{a_1,a_2}\neq0 \big\}.
\end{align}
In particular, when $r=1$ and $s=0$, 
\begin{align*}
    \ord_{E}(\tau)=\min \{a_1p+a_2q| c_{a_1,a_2}\neq0\}.
\end{align*}
\end{lemma}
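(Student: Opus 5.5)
The plan is to reduce to the toric computation of orders of monomials and then use that valuations given by a vector in the interior of the cone are monomial valuations, so they take the minimum over the monomials that appear. By Remark~\ref{rem:toroidal--computations}, $\ord_E(\tau)$ depends only on the germ of $(X,A,\tau)$ at $P$. We therefore pass to the common \'etale neighbourhood and then to the analytic germ of $\bA^2/\mu_r(1,s)$. On this germ, $\tau$ corresponds to the zero divisor $A_\tau=\divisor(h)+A$. Pulled back to the orbifold cover, this divisor is $\divisor(f_d)$, where $f_d$ is the semi-invariant function representing $\tau$ (a local generator of $\cO(A)$ becomes a unit times a semi-invariant monomial). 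Since $\ord_E$ of a $\bQ$-Cartier divisor is computed by pulling back, we get $\ord_E(\tau)=\frac1r\ord_{\tilde E}(f_d)$ after normalizing. Here $\tilde E$ is the valuation on the cover lying over $E$. The factor $\frac1r$ accounts for the ramification index relating the lattice $N$ of the quotient to the lattice $N'$ of the cover.

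Next I identify $\ord_{\tilde E}$ explicitly. Take the cover lattice $N'=\bZ^2$ with standard basis vectors $e_1,e_2$, so that $z_1,z_2$ are the characters dual to them. The quotient lattice is $N=N'+\bZ\cdot\frac1r(1,s)$. I will choose a change of coordinates to the normalized form of Notation~\ref{not:local-toric}, with $u_0=(r,-s)$ and $u_{n+1}=(0,1)$, so that $\cD_0$ corresponds to $(z_1=0)$ and $\cD_{n+1}$ to $(z_2=0)$. Under this change, $u=(p,q)$ expressed in $N'\otimes\bQ$ becomes $\frac{p}{r}e_1+\frac{sp+rq}{r}e_2$. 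This matches the decomposition used in the proof of Lemma~\ref{lem:local-toric-discrepancy}, and the consistency check is that the log discrepancy formula is recovered with $\phi(e_1)=1-b_0$ and $\phi(e_2)=1-b_{n+1}$. The toric divisor of the ray through $u$ then gives the monomial valuation $v_u(z_1^{a_1}z_2^{a_2})=\frac{a_1p+a_2(sp+rq)}{r}$. This is the value already normalized on $X_\sigma$, and it absorbs the factor $\frac1r$ above.

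The remaining step, and the main point, is to replace $z_1,z_2$ by the orbifold parameters $f_1,f_2$. I will show that $v_u$ is also monomial in $(f_1,f_2)$ with the same weights. By Definition~\ref{def:orbifold-parameters}, $(f_1=0)=(z_1=0)$ as reduced germs and $f_1$ is a parameter, so $f_1=z_1\cdot(\text{unit})$, and similarly $f_2=z_2\cdot(\text{unit})$. Hence $v_u(f_1^{a_1}f_2^{a_2})=v_u(z_1^{a_1}z_2^{a_2})$. To see that $v_u(\sum c f_1^{a_1}f_2^{a_2})$ equals the minimum over nonzero terms, I consider the analytic automorphism $z_i\mapsto f_i$ of the cover. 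It is $\mu_r$-equivariant by the semi-invariance conditions, so it descends to an automorphism of the quotient germ that preserves the boundary. Such an automorphism carries the monomial valuation $v_u$ to itself, since it fixes each boundary branch and multiplies the coordinates by units, so weights are preserved. It therefore suffices to prove the formula in the coordinates $z_1,z_2$.

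For $z_1,z_2$, the toric valuation attached to an interior ray is monomial: the minimum over the monomials is attained on the dense torus of $\tilde E$, and no cancellation can occur because distinct monomials restrict to distinct characters. This gives \eqref{eq:toric-order-formula}. The case $r=1$, $s=0$ is the specialization. I expect the bookkeeping of the lattice normalization, that is, getting the factor $\frac1r$ right against the vector $(p,q)$, to be the only delicate point, and I will check it against Lemma~\ref{lem:local-toric-discrepancy}.
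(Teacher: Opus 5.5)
Your proposal is correct and follows essentially the paper's route. Both arguments write $u=\frac{p}{r}u_0+\frac{sp+rq}{r}u_{n+1}$ to identify $\ord_E$ with the monomial valuation of weights $\frac1r(p,sp+rq)$ in $(z_1,z_2)$. Both then note that $f_i=\varepsilon_i z_i$ with units $\varepsilon_i$ leaves this valuation unchanged, and conclude by the weighted-order (monomial minimum) formula. The paper cites that formula from Kawakita, while you justify it via initial forms and characters on the torus of $E$.

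One harmless slip: the ramification index of $\tilde E$ over $E$ is $r/\gcd(p,r)$, not always $r$. Since you end up using the $\bQ$-valued valuation $v_u(z_1^{a_1}z_2^{a_2})=\frac{a_1p+a_2(sp+rq)}{r}$ directly, the formula is unaffected.
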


\begin{proof}
In the fixed local model, we have
\[
    u=(p,q)
    =\frac{p}{r}(r,-s)+\frac{sp+rq}{r}(0,1).
\]
Thus the subdivision obtained by adding the ray $\mathbb R_{\geq0}u$ is the weighted blow-up with weights
\[
    \operatorname{wt}(z_1,z_2)
    =\frac1r(p,sp+rq);
\]
see \cite[Definition~2.2.11]{Kawakita24book}. Its exceptional divisor corresponds to $E$.

Since $(f_1,f_2)$ defines the same ordered branches as $(z_1,z_2)$, we have $f_i=\varepsilon_i z_i$ for $\mu_r$-invariant units $\varepsilon_i$. The corresponding invariant monomials generate the same principal ideals. Thus the weighted ideals defining the weighted blow-up are unchanged, and we may compute the order along $E$ in $(f_1,f_2)$.

Applying the weighted-order formula following \cite[pp.~70--71]{Kawakita24book} to the semi-invariant representative $f_d$ of $\tau$, we obtain
\begin{align*}
    \operatorname{ord}_{E}(\tau)
    =\operatorname{w\text{-}ord}(f_d)
    =\min \big\{
    \frac{a_1p+a_2(sp+rq)}{r}|c_{a_1,a_2}\neq0\big\}.
\end{align*}
This proves \eqref{eq:toric-order-formula}.
\end{proof}

\section{A lower bound}
\label{sec:lower-bound}

\begin{theorem}\label{thm:lower-bound}
Let $(X,B)$ be a normal projective lc surface pair such that $K_X+B$ is ample and $B$ is a nonzero reduced divisor.  Suppose that $(X,B)$ has a zero-dimensional non-klt center $x\in\Supp(B)$. Then
\begin{align*}
    (K_X+B)^2\geq\frac{1}{42}.
\end{align*}
\end{theorem}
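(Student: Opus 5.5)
We follow the strategy outlined in the introduction. Let $B_0$ be an irreducible component of $B$ through the zero-dimensional non-klt center $x$, and set $b:=\pet(X,0;B)$. Since $K_X+B$ is ample, $(K_X+B)\cdot B_0>0$. Lemma~\ref{lem:positive-branch-one-sixth} then gives $(K_X+B)\cdot B_0\geq \frac16$.

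By the definition of the pseudo-effective threshold, $K_X+bB$ is pseudo-effective. Hence $K_X+B=(K_X+bB)+(1-b)B$. Since $K_X+B$ is nef, it has nonnegative intersection with the pseudo-effective class $K_X+bB$. Moreover $B\geq B_0$, and $(K_X+B)\cdot C\geq 0$ for every component $C$ of $B$. Together these give
\begin{align*}
    (K_X+B)^2\geq (1-b)(K_X+B)\cdot B\geq (1-b)(K_X+B)\cdot B_0 .
\end{align*}
If $b\leq \frac67$, this yields $(K_X+B)^2\geq \frac17\cdot\frac16=\frac1{42}$.

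Now suppose $b>\frac67$. The first step is to show $(K_X+B)\cdot B_0\geq\frac12$, i.e.\ to exclude the small-degree possibilities for $\mathrm{Diff}_{B_0^\nu}(B-B_0)$ allowed by the proof of Lemma~\ref{lem:positive-branch-one-sixth}. Those possibilities are $p_\infty+\frac12p_2+\frac{m-1}{m}p_3$ with $m\leq 5$, together with similar small configurations such as $p_\infty+\frac23+\frac23$. For the exclusion we use a complement argument. Run a $(K_X+bB)$-MMP, or pass to the model where $K_X+bB\equiv 0$ on a Mori fibre space or birational contraction. A threshold $b\in(\frac67,1)$ for which $(X,bB)$ is lc and $K_X+bB$ lies on the pseudo-effective boundary forces an exceptional log Calabi--Yau structure. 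We then bound the pair using Theorem~\ref{thm:delta-leq-2} and Theorem~\ref{thm:rank-one-exceptional}: the curves of $B$ with coefficient $b\geq\frac67$ have log discrepancy $\leq\frac17$, so $\delta\leq 2$. Inspecting the structure along $B_0$ in the rank one classification then shows that its different cannot have the small configurations. Alternatively, if the pair is not exceptional, there is a non-klt complement, and inversion of adjunction along $B_0$ forces a larger degree of the different.

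Given $(K_X+B)\cdot B_0\geq\frac12$, the second step uses the known gaps for pet with reduced boundary (\cite[Theorem 7.7]{LiuShokurov2023optimal}, \cite[Theorem 2.6]{LiuLiu26minimal}), in the form $b\leq\frac{10}{11}$ or some similar bound. This gives $1-b\geq\frac1{11}$, hence $(K_X+B)^2\geq\frac1{22}>\frac1{42}$. Lemma~\ref{lem:surface-one-gap} may also enter, through lc thresholds $\leq\frac56$ along $B$, to control the remaining cases.

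We expect the main obstacle to be the case $b>\frac67$: precisely, establishing the improved bound $(K_X+B)\cdot B_0\geq\frac12$, and matching the cited pet-gap statement to our setting. The first thing to try is to reduce to the rank one exceptional classification via an MMP for $K_X+bB$ and read off the different on the image of $B_0$.
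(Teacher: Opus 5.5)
Your case $b\le\frac67$ is exactly the paper's argument, and your final step is also sound: once $(K_X+B)\cdot B_0\ge\frac12$ is known, any pet bound $b\le\frac{20}{21}$ suffices, and \cite[Theorem~2.6]{LiuLiu26minimal} gives $b\le\frac{12}{13}$.

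The gap is that you never prove $(K_X+B)\cdot B_0\ge\frac12$ when $b>\frac67$, and the route you sketch does not work as stated.

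\emph{The MMP step.} A $(K_X+bB)$-MMP ends in a minimal model, not a Mori fibre space. You need a $(K_X+sB)$-MMP with $\frac67<s<b$. Then Lemma~\ref{lem:surface-one-gap} keeps $(W,B_W)$ lc, and an argument that the base is a point gives $\rho(W)=1$ and $K_W+b_WB_W\equiv0$. Crucially, you also need the negativity lemma together with ampleness of $K_X+B$ to see that $h\colon X\to W$ is an isomorphism near $x$ and contracts no component of $B$. Without this, nothing you learn on $W$ says anything about $(K_X+B)\cdot B_0$.

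\emph{Exceptionality.} This is not automatic. Since $K_W+b_WB_W\equiv0$, the only $\bQ$-complement is $b_WB_W$ itself, so ``exceptional'' means ``klt''. Moreover, any non-klt center of $(W,b_WB_W)$ lies off $B_W$, because its log discrepancies equal $a(E,W,B_W)+(1-b_W)\ord_E(B_W)$. So your fallback via inversion of adjunction along $B_0$ yields nothing.

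\emph{The rank-one classification.} Theorem~\ref{thm:rank-one-exceptional} does not apply to $(W,b_WB_W)$. Its hypotheses ($\frac17$-lt at closed points and $\delta=2$) force two boundary curves of coefficient $\ge\frac67$. If $x\in\mathrm{Sing}(W)$, a component $E$ of the minimal resolution over $x$ is an lc place of $(W,B_W)$ with $a(E,W,b_WB_W)=(1-b_W)a(E,W,0)\le 1-b_W<\frac17$, so $\frac17$-lt fails. When $B_W$ is irreducible (the case that actually occurs), the two hypotheses cannot hold simultaneously.

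\emph{The list of bad configurations.} It is incomplete: $x+\frac12p_2+\frac{m-1}{m}p_3$ gives $\frac12-\frac1m<\frac12$ for every $m\ge3$, not only $m\le5$.

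\emph{What is missing.} The paper uses the structure result \cite[Lemma~3.11]{LiuShokurov2023optimal} for a rank-one lc log Calabi--Yau pair $(W,b_WB_W)$ with $b_W>\frac67$. It says $B_W$ is irreducible and is one of the following:
\begin{itemize}
\item a smooth rational curve, with $(W,B_W)$ plt near $\mathrm{Sing}(W)$;
\item a smooth elliptic curve, with $(W,B_W)$ plt near $\mathrm{Sing}(W)$;
\item a rational curve with one node at a smooth point of $W$.
\end{itemize}
The zero-dimensional non-klt center excludes the first two, so $h(x)$ is the node. Transporting this back to $X$, $B$ is irreducible with a node at the smooth point $x$. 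Both preimages of $x$ in $B^\nu\simeq\mathbb P^1$ have coefficient one in the different, and positivity then gives $(K_X+B)\cdot B\ge\frac12$. The paper finishes with \cite[Theorem~2.6]{LiuLiu26minimal}: the value $b=\frac{12}{13}$ would force $B$ to be smooth, so $b\le\frac{10}{11}$, giving $(K_X+B)^2\ge\frac1{22}$.
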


\begin{proof}
Fix an irreducible component $B_0$ of $B$ through $x$. By Lemma \ref{lem:lc -pair-underlying-klt}, $X$ is klt along $\Supp(B)$, hence it is $\bQ$-factorial there. In particular, $B$ is $\bQ$-Cartier. Set
\begin{align*}
     b:=\pet(X,0;B).
\end{align*}

Since $(X,B)$ is log canonical, $(X,tB)$ is lc for every $t\in[0,1]$. The openness of the ample cone gives $b<1$, and the closedness of the pseudo-effective cone shows that $K_X+bB$ is pseudo-effective.

Lemma~\ref{lem:positive-branch-one-sixth} gives
\begin{align}
    (K_X+B)\cdot B_0\geq\frac{1}{6}.
\end{align}

Suppose first that $b\leq\frac{6}{7}$. Since $K_X+B$ is nef and $K_X+bB$ is pseudo-effective,
\begin{align}\label{eq:1/42}
    (K_X+B)^2
    &=(K_X+B)\cdot(K_X+bB)+(1-b)(K_X+B)\cdot B\notag\\
    &\geq(1-b)(K_X+B)\cdot B
     \geq\frac{1}{7}(K_X+B)\cdot B_0
     \geq\frac{1}{42}.
\end{align}

Now we assume that $b>\frac{6}{7}$ and will show that, in this case, $(K_X+B)^2\geq \frac{1}{22}$. Choose a rational number $s$ with $\frac{6}{7}<s<b$. Run a $(K_X+sB)$-MMP, which terminates with a Mori fiber space $f\colon W\longrightarrow V$. Let $h\colon X\longrightarrow W$ be the induced morphism and let $B_W:=h_*B$. Since $K_X+B$ is ample, $K_W+B_W$ is also ample. Note that $h$ may be an identity.

Since $(W,sB_W)$ is lc and $s>\frac{5}{6}$, Lemma~\ref{lem:surface-one-gap} shows that $(W, B_W)$ is log canonical. Since $K_W+bB_W$ is nef over $V$ and $K_W+sB_W$ is anti-ample over $V$, the divisor $B_W$ is ample over $V$. In particular, $B_W\neq0$ and there is a component of $B_W$ that is horizontal over $V$. Since, in addition, $\rho(W/V)=1$, there is a number $b_W\in(s,b]$ such that $K_W+b_WB_W\equiv_V 0$. Thus \cite[Corollary~7.1]{LiuShokurov2023optimal} implies that $V$ is a point. Consequently
\begin{align}
    K_W+b_WB_W\equiv0, \qquad \rho(W)=1.
\end{align}

We next show that $h$ is an isomorphism near $x$. Suppose that a curve $C$ through $x$ is contracted by $h$. Since $K_X+B$ is ample, the negativity lemma implies that we can write
\begin{align*}
    h^*(K_W+B_W)=K_X+B+aC+D,
\end{align*}
where $a>0$ and $D$ is an effective divisor supported on $\mathrm{Exc}(h)$. Let $F$ be a lc place of $(X, B)$ centered at $x$. Then
\begin{align*}
    a(F, W, B_W)&=a(F, X, B+aC+D)\\
    &=a(F, X, B)-a\ord_F(C)-\ord_F(D)\\
    &\leq -a\ord_F(C)<0.
\end{align*}
This is a contradiction. Hence no curve through $x$ is contracted by $h$ and $h$ is an isomorphism near $x$. A similar computation shows that no component of $B$ is contracted by $h$. Note that $h(x)$ remains a zero-dimensional non-klt center of $(W,B_W)$.

Apply \cite[Lemma~3.11]{LiuShokurov2023optimal} to $(W, b_WB_W)$.  Since $b_W>\frac{6}{7}$, the divisor $B_W$ is irreducible and exactly one of the following occurs:
\begin{enumerate}[label=\textup{(\roman*)}]
    \item $B_W$ is a smooth rational curve;
    \item $B_W$ is a smooth elliptic curve;
    \item $B_W$ is a rational curve with one node, and that node is a
    smooth point of $W$.
\end{enumerate}

In the first two cases, the same lemma implies that $(W,B_W)$ is plt near $\mathrm{Sing}(W)$, while at smooth points this is immediate. This contradicts the existence of a zero-dimensional non-klt center contained in $B_W$. Therefore the third case occurs, and $h(x)$ is the unique node of $B_W$ and is a smooth point of $W$.

Since no component of $B$ is contracted by $h$ and $B_W$ is irreducible, $B$ is also irreducible and its normalization is $\mathbb P^1$. Since $h$ is an isomorphism near $x$, $x$ is a node of $B$ and $x$ is a smooth point of $X$. The two inverse images of $x$ in the normalization both have coefficient $1$ in the different.  Adjunction formula therefore gives
\begin{align*}
    (K_X+B)\cdot B
    =-2+1+1+\sum_{j=1}^m\frac{q_j-1}{q_j}.
\end{align*}
Since $(K_X+B)\cdot B>0$, we have $(K_X+B)\cdot B\geq \frac{1}{2}$.

Applying \cite[Theorem~2.6]{LiuLiu26minimal} to $(X, B)$, we have
$b=\frac{12}{13}$ or $b\leq\frac{10}{11}$. In the first case the same theorem says that $B$ is a smooth rational curve, contrary to the node at $x$. Hence $b\leq\frac{10}{11}$, and
\begin{align*}
    (K_X+B)^2&=(K_X+B)\cdot(K_X+bB)+(1-b)(K_X+B)\cdot B\\
    &\geq(1-b)(K_X+B)\cdot B
     \geq\frac{1}{11}\cdot\frac{1}{2}
     =\frac{1}{22}.
\end{align*}
Together with \eqref{eq:1/42}, this completes the proof.
\end{proof}

\section{An example with the minimal volume}
\label{sec:construction}

In this section, we construct an example to show that the lower bound $\frac{1}{42}$ is sharp.

\begin{proposition}\label{prop:construction}
There is a surface pair $(X, B)\in \cS'(2, \{0,1\})$ such that
\begin{align*}
    (K_X+B)^2=\frac{1}{42}.
\end{align*}
\end{proposition}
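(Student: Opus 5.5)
The plan is to build the example from the rank-one exceptional pair of type $(I_2^2)$ of Theorem~\ref{thm:rank-one-exceptional}, in the normal form of Lemma~\ref{lem:rank-one-normal-forms}(2). Set $S=\bP(1,2,3)$, $C_{(3)}=(x_3=0)$, $C_{(4)}=(x_2^2-x_1^4-x_1x_3=0)$ and $\Delta_S:=\frac67(C_{(3)}+C_{(4)})$. Then $7(K_S+\Delta_S)\sim0$, $(S,\Delta_S)$ is klt and exceptional, and $C_{(3)}$ and $C_{(4)}$ meet transversally at two smooth points of $S$. Fix one of these crossings $P$, with analytic coordinates $(f_1,f_2)$ in the sense of Definition~\ref{def:orbifold-parameters}. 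Near $P$ the pair $(S,C_{(3)}+C_{(4)})$ is toroidal with $r=1$, $s=0$.

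For a primitive interior vector $u=(p,q)$, let $f\colon Y\to S$ extract the divisor $E$ of the ray $\bR_{\geq0}u$. Lemma~\ref{lem:local-toric-discrepancy} gives $a(E,S,\Delta_S)=\frac{p+q}{7}$, and Lemma~\ref{lem:toric-orders} gives $\ord_E C_{(3)}$ and $\ord_E C_{(4)}$ as $p$ and $q$ in some order. Lemma~\ref{lem:toric-intersections} gives $E^2=-\frac1{pq}$ and the intersections of $E$ with the two strict transforms. Writing $B_Y$ for the chosen reduced boundary on $Y$, I will use the crepant identity $f^*(K_S+\Delta_S)\equiv0$ to rewrite $L:=K_Y+B_Y+E$ numerically as $f^*(\text{a multiple of }\cO(1))+cE$. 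After that, $L^2$, $L\cdot E$ and $L\cdot(\text{strict transforms})$ become explicit rational functions of $p$ and $q$.

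I expect the choice of boundary to be the main obstacle. If $B_Y$ contains both strict transforms, the extraction is toroidal for $(S,C_{(3)}+C_{(4)})$, so $L=f^*(K_S+C_{(3)}+C_{(4)})$ and $L^2=\frac16$. So the example must either keep only part of the boundary, or use a toroidal structure at $P$ (or at the $\frac13(1,2)$ point $[0:0:1]\in C_{(4)}$) in which one branch is not a boundary curve. I would try first $B_Y=C_{(3)}'$, so that the lc pair is $(Y,C_{(3)}'+E)$, together with the mirror choice $B_Y=C_{(4)}'$. I would then solve for $(p,q)$ so that, after removing the negative part of the Zariski decomposition of $L$, the positive part has self-intersection $\frac1{42}$. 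The number $\frac1{42}=\frac17\cdot\frac16$ suggests aiming for the equality case of \eqref{eq:1/42}. There $\pet=\frac67$, and the curve $B_0$ satisfies $(K_X+B)\cdot B_0=\frac16$ with different $x+\frac12p_2+\frac23p_3$ as in Lemma~\ref{lem:positive-branch-one-sixth}. This gives two equations that should pin down the weights. Here $x$ is the point where the remaining boundary curve meets $E$, and $p_2,p_3$ come from cyclic quotient singularities on the ample model.

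Once the weights are fixed, I will check three things. First, $L$ is big and nef on $Y$, or becomes so after contracting the curves of its negative part. Second, the ample model $g\colon Y\to X$ is a birational contraction that does not contract $E$ or the chosen boundary curve; I will verify this with the intersection numbers above and Lemma~\ref{lem:crepant-toroidal}. Third, the point $E\cap B_Y$ stays a zero-dimensional non-klt center of $(X,B:=g_*(B_Y+E))$. For the third check, this point is a toroidal crossing of two coefficient-one curves, so it is an lc center as long as $g$ is an isomorphism near it, and the argument for $h$ in the proof of Theorem~\ref{thm:lower-bound} applies. Then $(X,B)\in\cS'(2,\{0,1\})$ and $(K_X+B)^2=\vol(Y,L)=\frac1{42}$. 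The delicate step is ensuring nefness without contracting the relevant boundary components; the numerical search over $(p,q)$ and the choice of center is where I expect the real work.
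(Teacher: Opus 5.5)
Your proposal has a genuine gap. The framework you set up cannot produce the example: you extract a divisor $E$ over $S$ and give it coefficient one in the boundary, and the candidates you name fail outright.

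If $B_Y=C_{(3)}'$ (resp.\ $C_{(4)}'$), let $H$ be the class of $\cO_S(1)$. The projection formula gives $L\cdot f^*H=H\cdot(K_S+C_{(3)})=-3H^2=-\frac12$ (resp.\ $H\cdot(K_S+C_{(4)})=-\frac13$). Since $f^*H$ is nef, $L$ is not even pseudo-effective, so there is no positive part to tune for any $(p,q)$.

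More generally, your checks require that $g$ contract neither $E$ nor $B_Y$ and that $x=E\cap B_Y$. Then $B=g_*(B_Y+E)$ has at least two components. But volume $\frac1{42}$ forces equality throughout \eqref{eq:1/42}, since the case $\pet(X,0;B)>\frac67$ gives at least $\frac1{22}$. Hence $(K_X+B)\cdot(B-B_0)=0$, so $B=B_0$ is irreducible. Any configuration in which two distinct coefficient-one curves survive on $X$ and cross at $x$ therefore has volume strictly larger than $\frac1{42}$, and the numerical search you describe cannot succeed.

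The missing idea is to keep both strict transforms with coefficient one and leave the extracted curves \emph{out} of the boundary. Your own remark that $L=f^*(K_S+C_{(3)}+C_{(4)})$ when both strict transforms are kept holds only because $E$ was included; dropping $E$ is exactly what lowers the volume.

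Every toroidal divisor $F$ over a crossing $P$ of $C_1=C_{(3)}$ and $C_2=C_{(4)}$ has $a(F,S,C_1+C_2)=0$. Those with $p+q=7$ also have $a(F,S,\frac67(C_1+C_2))=1$, so they are crepant for the Calabi--Yau pair once excluded from the boundary. The paper extracts $F_1,F_2$ for the rays $(5,2),(2,5)$, which gives
\[
K_Y+\tilde C_1+\tilde C_2=h^*(K_S+C_1+C_2)-F_1-F_2\sim_{\bQ}\tfrac17(\tilde C_1+\tilde C_2),
\]
and hence $L^2=\frac16+(F_1+F_2)^2=\frac16-\frac17=\frac1{42}$. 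A single such extraction lowers the volume only by $\frac1{pq}$ with $p+q=7$, which never equals $\frac17$, so two curves are needed.

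Nefness is then automatic. We have $L\cdot C\geq0$ for $C\neq\tilde C_i$, while $L\cdot\tilde C_1=\frac12-\frac12=0$ and $L\cdot\tilde C_2=\frac23-\frac12=\frac16$, so no Zariski decomposition is needed.

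Finally, the non-klt center does not come from a surviving crossing. The ample model contracts $\tilde C_1$, which still meets $\tilde C_2$ over the other crossing $[1:-1:0]$. So $x=g(\tilde C_1)$ lies on the irreducible boundary $B=g_*\tilde C_2$, and $\tilde C_1$ is an lc place over $x$.
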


\begin{proof}
We take the member $(S, \frac{6}{7}(C_1+C_2))$ of type $(I_2^2)$ as in Theorem~\ref{thm:rank-one-exceptional} with $\alpha_1=1, \alpha_2=0$. Thus
\begin{align*}
    C_1=\{x_3=0\},\quad 
    C_2=\{x_2^2=x_1^4+x_1x_3\}.
\end{align*}
The curves $C_1$ and $C_2$ meet transversely at the two smooth points $[1:1:0]$ and $[1:-1:0]$. Let $H$ denote the class of $\mathcal O_S(1)$; then $H^2=\frac16$. Since
\begin{align*}
    K_S\sim-6H,
    \qquad C_1\sim3H,
    \qquad C_2\sim4H,
\end{align*}
we have
\begin{align}\label{eq:calabi-yau-source}
    K_S+\frac{6}{7}(C_1+C_2)\sim_{\bQ}0,
    \qquad K_S+C_1+C_2\sim H,
    \qquad (K_S+C_1+C_2)^2=\frac{1}{6}.
\end{align}

Fix one of these two points, say $P=[1:1:0]$. Choose a local model $(X_\sigma,t)$ at $P$ such that the primitive generators of the cone $\sigma$ are $u_0=(1,0)$ and $u_3=(0,1)$, where $u_0$ and $u_3$ correspond to $C_1$ and $C_2$, respectively. Subdivide this cone by adding the rays generated by $u_1=(5,2), u_2=(2,5)$.

\begin{figure}[ht]
\centering
\begin{tikzpicture}[scale=0.6,>=Latex]
    \coordinate (O) at (0,0);

    \draw[very thick,->]
        (O)--(3,0)
        node[right] {$u_0=(1,0)$};

    \draw[very thick,->]
        (O)--(0,3)
        node[above] {$u_3=(0,1)$};

    \draw[red!70!black,dashed,very thick,->]
        (O)--(2.5,1)
        node[right] {$u_1=(5,2)$};

    \draw[red!70!black,dashed,very thick,->]
        (O)--(1,2.5)
        node[above right] {$u_2=(2,5)$};

    \fill (O) circle (1.4pt);
\end{tikzpicture}
\caption{The fan subdivision at the local model of $P$.}
\label{fig:equality-fan}
\end{figure}

Let $h\colon Y\to S$ be the projective birational morphism obtai
ned from the fan subdivision. Denote by $F_1,F_2$ the exceptional curves corresponding to the rays $\bR_{\geq 0}u_1, \bR_{\geq 0}u_2$, respectively. The morphism $h$ is an isomorphism away from $P$. Let $\tilde C_1,\tilde C_2$ be the strict transforms of $C_1,C_2$, respectively.

Applying Lemma~\ref{lem:local-toric-discrepancy} to $F_1,F_2$, we obtain
\begin{align*}
    a\left(
        F_i,S,\frac{6}{7}(C_1+C_2)
    \right)&=1,
    &
    a(F_i,S,C_1+C_2)&=0
    \qquad (i=1,2).
\end{align*}
Hence
\begin{align}\label{eq:crepant-six-sevenths}
    K_Y+\frac{6}{7}(\tilde{C}_1+\tilde{C}_2)
    &=
    h^*\left(
        K_S+\frac{6}{7}(C_1+C_2)
    \right)
    \sim_{\bQ}0
\end{align}
and
\begin{align}\label{eq:pullback-log-canonical}
    K_Y+\tilde{C}_1+\tilde{C}_2
    =
    h^*(K_S+C_1+C_2)-F_1-F_2.
\end{align}
Moreover, $(Y,\tilde{C}_1+\tilde{C}_2)$ is lc.

Lemma~\ref{lem:toric-intersections} gives
\begin{align}\label{eq:G-intersections}
    F_1^2=-\frac{5}{42},\quad
    F_2^2=-\frac{5}{42},\quad
    F_1\cdot F_2=\frac{1}{21},\quad
    F_1\cdot\tilde C_1=\frac{1}{2},\quad
    F_2\cdot\tilde C_2=\frac{1}{2}.
\end{align}
We also have $F_1\cdot \tilde{C}_2=F_2\cdot \tilde C_1=0$. In particular,
\begin{align*}
    (F_1+F_2)^2
    &=
    F_1^2+2F_1\cdot F_2+F_2^2\\
    &=
    -\frac{5}{42}
    +\frac{2}{21}
    -\frac{5}{42}
    =
    -\frac{1}{7}.
\end{align*}
Combining this with \eqref{eq:pullback-log-canonical}, we obtain
\begin{align}\label{eq:volume-computation}
    (K_Y+\tilde{C}_1+\tilde{C}_2)^2
    &=
    (K_S+C_1+C_2)^2+(F_1+F_2)^2\notag\\
    &=
    \frac{1}{6}-\frac{1}{7}
    =
    \frac{1}{42}.
\end{align}

By \ref{eq:calabi-yau-source} and \ref{eq:pullback-log-canonical}, we have
\begin{align}\label{eq:all-intersections}
    (K_Y+\tilde{C}_1+\tilde{C}_2)\cdot\tilde C_1=0,\qquad
    (K_Y+\tilde{C}_1+\tilde{C}_2)\cdot\tilde C_2=\frac{1}{6}.
\end{align}
By \eqref{eq:crepant-six-sevenths}, we have $K_Y+\tilde{C}_1+\tilde{C}_2\sim_{\bQ}\dfrac{1}{7}(\tilde{C}_1+\tilde{C}_2)$. Hence $(K_Y+\tilde{C}_1+\tilde{C}_2)\cdot C\geq 0$ for every irreducible curve $C\neq \tilde{C}_1,\tilde{C}_2$. Combine this with \eqref{eq:all-intersections}, it follows that $K_Y+\tilde{C}_1+\tilde{C}_2$ is nef. Since $(K_Y+\tilde{C}_1+\tilde{C}_2)^2>0$, it is also big.

By abundance for log canonical surfaces \cite[Theorem~1.1]{Fujino12MMP}, the divisor $K_Y+\tilde{C}_1+\tilde{C}_2$ is semiample. Let $g\colon Y\to X$ be the morphism defined by $K_Y+\tilde{C}_1+\tilde{C}_2$. Since $K_Y+\tilde{C}_1+\tilde{C}_2$ is big, $g$ is birational. Set $B:=g_*(\tilde{C}_1+\tilde{C}_2)$. By \eqref{eq:all-intersections}, the morphism $g$ contracts $\tilde C_1$ but does not contract $\tilde C_2$. It may also contract other curves. Thus $B=g_*\tilde C_2$ is a nonzero reduced curve. By the definition of $g$, we have
\begin{align}\label{eq:ample-model-crepant}
    K_Y+\tilde{C}_1+\tilde{C}_2=g^*(K_X+B),
\end{align}
where $K_X+B$ is ample. Since $(Y, \tilde{C}_1+\tilde{C}_2)$ is lc, so is $(X,B)$.

By construction, $h$ is an isomorphism over the other point $[1:-1:0]$ of $C_1\cap C_2$. Hence $\tilde C_1\cap\tilde C_2\neq\varnothing$. Let $x:=g(\tilde C_1)$. Then $x\in B$. Equation \eqref{eq:ample-model-crepant} shows that $\tilde C_1$ is an lc place centered at $x$ and $x$ is a non-klt center of $(X, B)$. Therefore $(X, B)\in \cS'(2, \{0,1\})$. Finally,
\begin{align*}
    (K_X+B)^2=(K_Y+\tilde{C}_1+\tilde{C}_2)^2=\frac{1}{42}.
\end{align*}
\end{proof}

\section{Some properties of equality pairs}
\label{sec:equality-pair-properties}

We call a pair $(X, B)$ in $\cS'(2,\{0,1\})$ of volume $\frac{1}{42}$ an \emph{equality pair}. In this section, we establish some properties of equality pairs that will be used in the next section.

\begin{theorem}\label{thm:equality-properties}
Let $(X,B)$ be an equality pair, and let $x$ be the zero-dimensional non-klt center on the boundary. Then the following statements hold.
\begin{enumerate}[label={\rm(\arabic*)}]
    \item $K_X+\frac{6}{7}B\sim_{\bQ}0$ and $(K_X+B)\cdot B=\frac{1}{6}$.
    \item $B$ is a smooth rational curve, and $\mathrm{Diff}_B(0)=x+\frac{1}{2}p_2+\frac{2}{3}p_3$. Moreover, $x$ is the unique zero-dimensional non-klt center of $(X, B)$. 
    \item $(X, \frac{6}{7}B)$ is an exceptional log Calabi-Yau pair and $\delta(X, \frac{6}{7}B)=2$.
\end{enumerate}
\end{theorem}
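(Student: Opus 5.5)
The plan is to run the proof of Theorem~\ref{thm:lower-bound} backwards and see what equality in $(K_X+B)^2\geq\frac{1}{42}$ forces. Set $b:=\pet(X,0;B)$ and fix a component $B_0$ of $B$ through $x$. If $b>\frac67$, that proof gives $(K_X+B)^2\geq\frac1{22}>\frac1{42}$, which is impossible. So $b\leq\frac67$, and every inequality in \eqref{eq:1/42} must be an equality:
\begin{align*}
(K_X+B)\cdot(K_X+bB)=0,\qquad 1-b=\frac17,\qquad (K_X+B)\cdot(B-B_0)=0,\qquad (K_X+B)\cdot B_0=\frac16.
\end{align*}
Since $K_X+B$ is ample, $(K_X+B)\cdot(B-B_0)=0$ forces $B=B_0$, so $B$ is irreducible. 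We also get $b=\frac67$. The divisor $K_X+\frac67B$ is pseudo-effective and satisfies $(K_X+B)\cdot(K_X+\frac67B)=0$ with $K_X+B$ ample; hence $K_X+\frac67B\equiv0$. On a klt surface numerical triviality gives $\bQ$-linear triviality. Near $B$ this holds because $X$ is klt there by Lemma~\ref{lem:lc -pair-underlying-klt}. Away from $B$ there may be lc but non-klt points, and I would handle them with abundance/log Calabi--Yau arguments for lc surfaces, e.g.\ \cite{Fujino12MMP}: $K_X+\frac67B$ is nef, $\equiv0$ and lc, hence semiample, hence torsion. This proves (1).

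For (2), equality in Lemma~\ref{lem:positive-branch-one-sixth} gives $B\simeq\bP^1$ and $\mathrm{Diff}_B(0)=x+\frac12p_2+\frac23p_3$. Any other zero-dimensional non-klt center $x'$ on $B$ would contribute a second coefficient-one point of the different, by inversion of adjunction, which contradicts the form of the different. It remains to exclude a zero-dimensional non-klt center off $B$. At such a point $x'$, the pair $(X,\frac67B)$ agrees with $(X,B)$, so $x'$ would be a non-klt center of the lc log Calabi--Yau pair $(X,\frac67B)$. I expect this to contradict exceptionality, so I would prove (3) first and deduce uniqueness from it.

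Part (3) is the main obstacle. For exceptionality, suppose $K_X+\Theta\sim_\bQ0$ with $\Theta\geq\frac67B$ and $(X,\Theta)$ lc but not klt. Write $\Theta=\frac67B+G$; then $G\equiv0$ and $G\geq 0$, so $G=0$ on a projective surface. Thus $\frac67B$ is itself the only $\bQ$-complement, and it suffices to show $(X,\frac67B)$ is klt. Along $B$ this holds because $(X,B)$ is lc and the coefficient is $\frac67<1$. Off $B$ the worry is again non-klt points of $X$. To rule them out, I would pass to a dlt modification and use connectedness (Koll\'ar--Shokurov) for the log Calabi--Yau pair $(X,\frac67B)$: its non-klt locus would be a union of isolated points away from $B$. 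A Calabi--Yau surface with an isolated lc non-klt singularity has $K$ trivial near the singularity, and I would derive a contradiction via a Riemann--Roch/vanishing argument or the fact that $B$ is then big and meets everything. If this step resists, I would first try to show $h^*B$ meets the exceptional curves to get a contradiction.

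For $\delta$, Theorem~\ref{thm:delta-leq-2} gives $\delta\leq2$, because $\frac67\in\Phi_{\mathrm m}$. For $\delta\geq2$: the divisor $B$ itself has $a(B,X,\frac67B)=\frac17$. For a second divisor, take the lc place $F$ of $(X,B)$ over $x$. Then $a(F,X,\frac67B)=a(F,X,B)+\frac17\ord_F B=\frac17\ord_FB$, and I would show $\ord_F B\leq1$ by choosing $F$ suitably, e.g.\ the first toroidal extraction at the node or singular point with minimal order. Equality $\delta=2$ then follows.
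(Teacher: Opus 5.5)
\paragraph{Where the proposal falls short.} Your treatment of (1) matches the paper. So does your proof that $B$ is smooth with $\mathrm{Diff}_B(0)=x+\frac12p_2+\frac23p_3$, your reduction of exceptionality to the single claim ``$(X,\frac67B)$ is klt'', and your use of Theorem~\ref{thm:delta-leq-2} for $\delta\le 2$.

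The genuine gap is that claim itself: you do not show that $X$ has no strictly log canonical point $p\notin B$. Both the uniqueness statement in (2) and exceptionality in (3) rest on it.

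\begin{itemize}
\item Connectedness cannot produce a contradiction. The non-klt locus of $(X,\frac67B)$ would be a finite set off $B$, and a single point is connected. For $K_X+\Delta\equiv0$ even two components are allowed.
\item Vanishing, or ``$B$ is ample and meets every curve'', never uses the value of the coefficient, and the analogous statement fails for other coefficients. On the projective cone $X\subset\bP^3$ over a smooth plane cubic one has $K_X\sim -H$. So $K_X+\frac12B\sim_{\bQ}0$ for a general $B\in|2H|$, $B$ is ample and misses the vertex, and the vertex is a non-klt point.
\end{itemize}

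So some input specific to $b=\frac67$ is unavoidable. The paper obtains it by quoting \cite[Proposition~3.5]{LiuShokurov2023optimal}, applied via Set-up~3.2 there with $b=\frac67$.

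\paragraph{A self-contained way to close it.} Assume such a $p$ exists and let $g\colon Y\to X$ be a dlt modification of $(X,\frac67B)$.
\begin{itemize}
\item Since $(X,\frac67B)$ is klt near $B$, $g$ is an isomorphism there. Hence $K_Y+\frac67B_Y+E\equiv0$ with $E\neq0$ reduced and $B_Y^2=B^2>0$.
\item Run a $(K_Y+\frac67B_Y+(1-\epsilon)E)$-MMP. This is a $(-E)$-MMP and $-E$ is not pseudo-effective, so it ends with a Mori fibre space $W\to V$.
\item Each step is crepant for $K_Y+\frac67B_Y+E$, and $E_W$ is relatively ample. Also $B_W^2\ge B_Y^2>0$, so neither $E$ nor $B$ is contracted.
\end{itemize}
If $V$ is a curve, a general fibre $F$ gives $\frac67(B_W\cdot F)+E_W\cdot F=2$ with both intersection numbers positive integers. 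This is impossible.

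If $V$ is a point, some component $E_0$ of $E_W$ meets $B_W$, and adjunction gives $\deg\mathrm{Diff}_{E_0^\nu}(E_W-E_0+\frac67B_W)=2-2g(E_0^\nu)$.
\begin{itemize}
\item Log canonicity forces each point of $B_W\cap E_0$ to have coefficient exactly $1-\frac1{7m}$, where $m$ is the local index.
\item All other coefficients lie in $\{1-\frac1n\}\cup\{1\}$.
\item Such a sum is positive, which rules out $g(E_0^\nu)\ge1$.
\item It also cannot equal $2$. Three or more $B$-points already give more than $2$. Two $B$-points leave a remainder in $(0,\frac27]$, which no sum of coefficients $\geq\frac12$ fills. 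One $B$-point would need the remaining coefficients to be $1-\frac1s,1-\frac1t$ with $\frac1s+\frac1t=1-\frac1{7m}$, which forces $s=2$ and $7m-2\mid 4$.
\end{itemize}

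\paragraph{A smaller point on $\delta\ge2$.} There is no node at $x$, since $B$ is smooth by (2). Also, $(X,B)$ is not toroidal at $x$, since $B$ has a single branch there. So ``the first toroidal extraction'' is not the right tool. Instead, take $F$ to be an lc place lying on the minimal resolution of $x$, as in Lemma~\ref{lem:boundary-center-extraction}. Then $a(F,X,B)=0$ gives $\ord_FB=a(F,X,0)\le 1$, because $x$ is a klt point; this is exactly the paper's argument.
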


\begin{proof}[Proof of Theorem \ref{thm:equality-properties} (1) and (2)]
The case $\pet(X, 0; B)>\frac{6}{7}$ in the proof of Theorem \ref{thm:lower-bound} gives $(K_X+B)^2\geq \frac{1}{22}$, so the equality pair satisfies $\pet(X, 0; B)\leq \frac{6}{7}$.

Let $B_0$ be the irreducible component of $B$ containing $x$. Since $(K_X+B)^2=\frac{1}{42}$, equality holds throughout \eqref{eq:1/42}. Hence
\begin{align*}
    &\pet(X, 0;B)=\frac{6}{7}, &\quad &(K_X+B)\cdot B_0=\frac{1}{6},\\
    &(K_X+B)\cdot \left(K_X+\frac{6}{7}B\right)=0, &\quad &(K_X+B)\cdot (B-B_0)=0
\end{align*}
As $K_X+B$ is ample, the last equality implies $B=B_0$ and therefore $B$ is irreducible. Moreover, $K_X+\frac{6}{7}B$ is pseudo-effective, and its intersection with the ample divisor $K_X+B$ is zero. Thus $K_X+\frac{6}{7}B\equiv0$. Abundance for lc surfaces gives $K_X+\frac{6}{7}B\sim_{\bQ}0$. This proves (1).

Applying the moreover part in Lemma~\ref{lem:positive-branch-one-sixth} to $C=B$, we obtain
\begin{align*}
    B\simeq\mathbb P^1,\qquad \mathrm{Diff}_{B}(0) = x+\frac{1}{2}p_2+\frac{2}{3}p_3
\end{align*}
for three distinct closed points $x,p_2,p_3\in B$. By inversion of adjunction, $(X,B)$ is plt along $B\setminus\{x\}$. The local description of plt surface pairs and the coefficients of the different show that $p_2,p_3$ are cyclic quotient singularities of $X$, and that $X$ is smooth along $B\setminus\{x,p_2,p_3\}$.

Finally, $(X,\frac{6}{7}B)$ is an lc log Calabi-Yau pair satisfying Set-up 3.2 of \cite{LiuShokurov2023optimal} with $b=\frac{6}{7}$. \cite[Proposition~3.5]{LiuShokurov2023optimal} implies that $(X, \frac{6}{7}B)$ is klt. In particular, $X$ is klt. Since $(X,B)$ coincides with $(X,0)$ on $X\setminus B$, there is no non-klt center outside $B$. This proves (2).
\end{proof}

\begin{lemma}\label{lem:boundary-center-extraction}
Let $(X,B)$ be an equality pair, and let $x$ be the zero-dimensional non-klt center on the boundary. Then there is a projective birational morphism $f\colon Y\to X$ with the following properties:
\begin{enumerate}[label={\rm(\arabic*)}]
    \item $Y$ is klt;
    \item $K_Y+B_Y+E=f^*(K_X+B)$, where $B_Y:=f_*^{-1}B$ and $\Exc(f)=E$;
    \item $E$ is a smooth rational curve and meets $B_Y$ transversely at exactly one smooth point $y\in Y$.
\end{enumerate}
In particular, $(Y,B_Y+E)$ is lc and $E$ is a lc place of $(X,B)$ centered at $x$.
\end{lemma}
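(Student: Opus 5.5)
We first pin down the local structure of $(X,B)$ at $x$, and then extract a single lc place. By Theorem~\ref{thm:equality-properties}(2), $B$ is a smooth rational curve and $x$ is a non-klt center of $(X,B)$. Since $B$ is smooth at $x$ and $(X,B)$ is lc but not plt at $x$, the classification of lc surface germs with reduced boundary (\cite[Chapter 4]{KM98}) leaves two possibilities. Either $x\in X$ is a cyclic quotient singularity and $(X,B)$ is locally toroidal with $B$ one branch, or $x$ is a non-cyclic (dihedral-type) point. By Lemma~\ref{lem:lc -pair-underlying-klt}, $X$ is klt at $x$. The coefficient of $x$ in $\mathrm{Diff}_B(0)$ is $1$, so the minimal resolution graph of $(X,B)$ over $x$ is either a chain whose far end carries a $(-2)$-fork of two $(-2)$-curves (type $D$), or a chain attached to $B$ at one end that is not plt. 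For a smooth branch with different coefficient exactly $1$ on a klt point, the second option is impossible, so we are in the first case. In that case there is a unique lc place $E$: the central curve of the fork, or, after contracting the two $(-2)$-tips, the end of the chain.

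Concretely, we take the minimal resolution $\pi\colon \tilde X\to X$ near $x$ and write $K_{\tilde X}+\tilde B+\sum a_iE_i=\pi^*(K_X+B)$. The curves with $a_i=1$ form a chain from the curve meeting $\tilde B$ up to the fork vertex, while the two $(-2)$-tips have $a=\tfrac12$. We then contract every $\pi$-exceptional curve except the one $E$ that meets $\tilde B$. This is possible by Artin/Grauert negativity, or equivalently by running a relative MMP over $X$ for $K+\tilde B+E$ plus a small multiple of the other curves. The contracted configurations are the remaining chain together with the two tips, which is again a $D$-type cluster, so the resulting points are klt. This gives (1). For (2), we choose $E$ among the coefficient-one curves, so crepancy gives $K_Y+B_Y+E=f^*(K_X+B)$ with $\Exc(f)=E$.

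If we instead want to avoid the fork analysis, we would apply Lemma~\ref{lem:crepant-toroidal}-style reasoning or a dlt modification. We take a dlt modification of $(X,B)$ over $x$ and then contract all but the exceptional curve meeting $B_Y$. The lc places over $x$ form a chain, and we keep the first one.

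Statement (3) comes from adjunction. $E\simeq\bP^1$ because exceptional curves of klt resolutions are rational. $E$ meets $B_Y$ only at the point $y$, which lies on the smooth locus of $\tilde X$ and is not touched by later contractions, so $y\in Y$ is smooth. Since the curves meet transversally on the minimal resolution of a log canonical pair with reduced boundary, $B_Y$ meets $E$ transversally there.

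The main obstacle is the step that excludes every other configuration at $x$. We need to show that the only lc place over $x$ adjacent to $B_Y$ is a single curve, and that no other component of the coefficient-one chain also meets $B_Y$. This is what makes "exactly one point" hold. It reduces to the fact that $\tilde B$ meets only the end of the exceptional chain, which follows from the standard description of lc germs with a smooth boundary branch.
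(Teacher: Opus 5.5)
Your construction is exactly the paper's: take the minimal resolution over $x$, use Koll\'ar's graph $\tilde B-F_1-\cdots-F_n$ with two $(-2)$-tips $G_1,G_2$ at $F_n$, and contract everything except the curve $F_1$ meeting $\tilde B$. It gives (1), (2), and the fact that $B_Y$ meets $E$ transversally in a single smooth point.

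\textbf{The gap is the word \emph{smooth} in (3).} Knowing $F_1\simeq\mathbb P^1$ only gives $E^\nu\simeq\mathbb P^1$, where $E$ is the image of $F_1$ on $Y$. This curve passes through the singular points of $Y$ obtained by contracting $F_2,\dots,F_n,G_1,G_2$. The image of a smooth curve through a klt point need not be smooth: a curve meeting the central curve of a $D_4$ configuration transversally maps to a curve of multiplicity $2$. You say (3) ``comes from adjunction'' but never carry it out. The paper does:
\[
(K_Y+E)\cdot E=(K_Y+B_Y+E)\cdot E-B_Y\cdot E=0-1=-1,
\]
so $2g(E^\nu)-2+\deg\mathrm{Diff}_{E^\nu}(0)=-1$. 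This forces $E^\nu\simeq\mathbb P^1$ and $\deg\mathrm{Diff}_{E^\nu}(0)=1$. Since $(Y,E)$ is lc, a non-normal point of $E$ would contribute at least $2$ to the different. Hence $E$ is normal, i.e.\ smooth. This needs only $B_Y\cdot E=1$, which you already have.

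Two further statements are wrong, though neither is load-bearing.

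First, the MMP you call ``equivalent'' to the Artin contraction does the opposite of what you want. Let $\tilde D:=\tilde B+\sum_i F_i+\frac12(G_1+G_2)$ be the crepant boundary. Then
\[
K_{\tilde X}+\tilde B+F_1+\epsilon\sum_{C\neq F_1}C\equiv_X-N,
\qquad
N=(1-\epsilon)\sum_{i\geq2}F_i+\left(\tfrac12-\epsilon\right)(G_1+G_2),
\]
and $N\cdot F_1>0$. So $F_1$ spans a negative extremal ray and may be contracted; for $n=1$ it is the only negative curve, so it will be. The paper instead runs the $(K_{\tilde X}+\tilde D-\epsilon F_1)$-MMP. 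That divisor is $\equiv_X-\epsilon F_1$, whose negative curves are the neighbours of $F_1$, so $F_1$ is never contracted. Your Artin-contractibility route is fine: the complementary configurations are rational, so the contraction exists and is projective.

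Second, the lc place over $x$ is not unique, since every $F_i$ has coefficient $1$ in $\tilde D$. Your first paragraph suggests taking the fork centre $F_n$; for $n\geq2$ that would violate (3). Contracting $F_1,\dots,F_{n-1}$ would put a singular point of $Y$ at $B_Y\cap E$. Only $F_1$ works, which is the choice you actually make in your second paragraph.
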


\begin{proof}
Recall that $B$ is smooth and that $x$ is a singular point of $X$. Let $h\colon\tilde X\to X$ be a projective partial resolution which is an isomorphism over $X\setminus\{x\}$ and is the minimal resolution over a neighborhood of $x$.

Since $B$ has only one analytic branch at $x$, the classification in \cite[Section 3, Fig 3(10), P. 58]{Kollar92Utah} implies that the extended dual graph over $x$ has the following form:
\begin{center}
\scalebox{0.9}{
\begin{tikzpicture}[
    baseline=(current bounding box.center),
    every node/.style={inner sep=2pt}
]
    \node (B)  at (0,0) {$\tilde B$};
    \node (F1) at (1.4,0) {$F_1$};
    \node (dots) at (2.8,0) {$\cdots$};
    \node (Fn) at (4.2,0) {$F_n$};
    \node (G1) at (5.6,0.8) {$G_1$};
    \node (G2) at (5.6,-0.8) {$G_2$};

    \draw (B)--(F1);
    \draw (F1)--(dots);
    \draw (dots)--(Fn);
    \draw (Fn)--(G1);
    \draw (Fn)--(G2);
\end{tikzpicture}
}
\end{center}
where $\tilde{B}:=h_*^{-1}B$ and $F_n$ meets two $(-2)$-curves $G_1$ and $G_2$. Moreover,
\begin{align}\label{eq:crepant-boundary-minimal-resolution}
    K_{\tilde X}+\tilde{D}:=K_{\tilde X}+\tilde B
    +\sum_{i=1}^{n}F_i
    +\frac{1}{2}G_1+\frac{1}{2}G_2
    =
    h^*(K_X+B).
\end{align}
For a sufficiently small $\epsilon>0$, run a $(K_{\tilde{X}}+\tilde{D}-\epsilon F_1)$-MMP over $X$. Since
\begin{align*}
    K_{\tilde X}+\tilde D-\epsilon F_1
    \equiv_X-\epsilon F_1,
\end{align*}
the MMP contracts all $h$-exceptional curves other than $F_1$ and does not contract $F_1$. Let $g\colon\tilde X\to Y$ be the composite of the contractions, which induces a projective birational morphism $f\colon Y\to X$.

The classification in \cite[Section~3, P57]{Kollar92Utah} also shows that every connected component contracted by $g$ is the resolution graph of a klt surface singularity. Since $\tilde X$ is smooth near $h^{-1}(x)$, it follows that $Y$ is klt over $x$. Since $f$ is an isomorphism away from $f^{-1}(x)$ and $X$ is klt, $Y$ is klt away from $f^{-1}(x)$. Therefore $Y$ is klt. This proves (1)

Set $E:=g_*F_1, B_Y:=g_*\tilde{B}$. Since $F_1$ is the only $h$-exceptional curve not contracted by $g$, we have $\mathrm{Exc}(f)=E$. Since every step of the MMP is crepant with respect to $K_{\tilde X}+\tilde D$, \eqref{eq:crepant-boundary-minimal-resolution} gives
\begin{align}\label{eq:crepant-boundary-partial-resolution}
    K_Y+B_Y+E=f^*(K_X+B).
\end{align}
Consequently, $E$ is an lc place of $(X,B)$ centered at $x$. This proves (2)

Since no $g$-exceptional curve passes through the point $\tilde B\cap F_1$, $g$ is an isomorphism in a neighborhood of this point. Consequently, $E$ meets $B_Y$ transversely at a smooth point of $Y$.

Since $(Y,B_Y+E)$ is lc, so is $(Y,E)$. Moreover,
\begin{align*}
    (K_Y+E)\cdot E
    &=(K_Y+B_Y+E)\cdot E-B_Y\cdot E\\
    &=-B_Y\cdot E=-1.
\end{align*}
Let $\nu\colon E^\nu\to E$ be the normalization. By adjunction,
\begin{align*}
    2g(E^\nu)-2+\deg \mathrm{Diff}_{E^\nu}(0)=(K_Y+E)\cdot E=-1.
\end{align*}
Since the $\mathrm{Diff}_{E^\nu}(0)$ is effective, $E^\nu\simeq\mathbb P^1$ and $\deg\mathrm{Diff}_{E^\nu}(0)=1$. If $E$ were not normal, its conductor would contribute at least two to the degree of the different, a contradiction. Hence $E$ is normal, and therefore $E$ is a smooth rational curve. This proves (3).
\end{proof}

\begin{proof}[Proof of Theorem \ref{thm:equality-properties}(3)]
By the proof of Theorem \ref{thm:equality-properties}(2), the pair $(X, \frac{6}{7}B)$ is klt, and Theorem \ref{thm:equality-properties}(1) gives $K_X+\frac{6}{7}B\sim_{\bQ}0$. Thus $(X,\frac{6}{7}B)$ is a projective klt log Calabi-Yau pair, and hence is exceptional.

It remains to show that $\delta(X, \frac{6}{7}B)=2$. The prime divisor $B$ itself has coefficient $\frac{6}{7}$ in the boundary, so $\delta(X, \frac{6}{7}B)\geq 1$. Let $f\colon Y\to X$ and $E$ be as in Lemma~\ref{lem:boundary-center-extraction}. We will show that $a(E, X, \frac{6}{7}B)\leq \frac{1}{7}$.

Since $E$ is a lc place of $(X, B)$,
\begin{align*}
    a(E, X, B)=a(E, X, 0)-\ord_E(B)=0
\end{align*}
gives
\begin{align*}
    \ord_E(B)=a(E, X, 0).
\end{align*}
The divisor $E$ corresponds to a component of the minimal resolution of the klt surface singularity $x\in X$. Hence $a(E,X,0)\leq 1$. Since $E$ is a lc place of $(X, B)$,
\begin{align*}
    a(E, X, \frac{6}{7}B)&=a(E, X, B)+\frac{1}{7}\ord_E(B)\\
    &=\frac{1}{7}\ord_E(B)\\
    &=\frac{1}{7}a(E,X,0)\leq \frac{1}{7}.
\end{align*}
The valuations defined by $B$ and $E$ are distinct, so $\delta(X, \frac{6}{7}B)\geq 2$. By Theorem \ref{thm:delta-leq-2}, $\delta(X, \frac{6}{7}B)\leq 2$. Hence $\delta(X, \frac{6}{7}B)=2$.
\end{proof}

\section{Uniqueness of the equality pair}
\label{sec:uniqueness}

We prove that the equality pair constructed in Proposition~\ref{prop:construction} is unique up to isomorphism.

\begin{theorem}\label{thm:uniqueness}
Let $(X,B)$ be an equality pair, then $(X,B)\simeq(X_{18},B_{18})$, where
\begin{align}\label{eq:X18-definition}
    X_{18}:=\left\{z^2+y^2(y-x^3)+x^2t^2=0\right\}
    \subset\mathbb P(2_x,6_y,7_t,9_z),\qquad
    B_{18}:=(t=0)|_{X_{18}}.
\end{align}
\end{theorem}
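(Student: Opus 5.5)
Let $(X,B)$ be an equality pair and let $f\colon (Y,B_Y+E)\to (X,B)$ be the extraction of Lemma~\ref{lem:boundary-center-extraction}. The plan is to reduce to a rank one exceptional pair, pin down the birational morphism down to it, and then recover $X$ as an explicit ample model, following the outline in the introduction.

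\emph{Step 1: reduction to rank one.} By Theorem~\ref{thm:equality-properties}, $(X,\frac67B)$ is exceptional log Calabi--Yau with $\delta=2$. The two divisors with log discrepancy $\le\frac17$ are $B$ and $E$, and
\begin{align*}
a\bigl(E,X,\tfrac67B\bigr)=\tfrac17 a(E,X,0).
\end{align*}
Write $K_Y+\frac67B_Y+cE=f^*(K_X+\frac67B)$ with $c\ge\frac67$. By Lemma~\ref{lem:crepant-preserve}, $(Y,\frac67B_Y+cE)$ is again exceptional with $\delta=2$. Run a $K_Y$-MMP, or more precisely a $(K_Y+\frac67B_Y+cE-\epsilon(B_Y+E))$-MMP, which is $\equiv -\epsilon(B_Y+E)$. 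Since every curve with log discrepancy $>\frac17$ may be contracted while $\delta$ and exceptionality are preserved, this yields $h\colon Y\to S$ with $\rho(S)=1$, crepant for the Calabi--Yau pair. Because $\delta(S,\Delta)=2$ and $B_Y,E$ are the only divisors with small discrepancy, neither is contracted. So $\Delta=b_1C_1+b_2C_2$ with $F=0$, and Theorem~\ref{thm:rank-one-exceptional} gives type $(A_2^6)$ or $(I_2^2)$ with $b_1=b_2=\frac67$. This forces $c=\frac67$, that is $a(E,X,0)=1$.

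\emph{Step 2: $h$ is toroidal.} Since $h$ is crepant for a klt pair supported on $C_1+C_2$, and $S\setminus(C_1\cup C_2)$ is smooth (the singularities of $\bP(1,2,3)$ lie on the boundary in both types), Lemma~\ref{lem:crepant-toroidal} shows that $h$ is toroidal. All its exceptional curves lie over points of $C_1\cap C_2$, with $a(\cdot,S,\Delta)=1$. The toric discrepancy formula of Lemma~\ref{lem:local-toric-discrepancy} restricts the allowed rays $u=(p,q)$ to those with $\frac{p+q}{7}=1$ (in smooth charts), i.e. $p+q=7$; the case $(A_2^6)$ also has node-type intersections. Next impose:
\begin{itemize}
\item $K_Y+B_Y+E=f^*(K_X+B)$ is nef, with $f$ contracting exactly the curves $\Gamma$ satisfying $(K_Y+B_Y+E)\cdot\Gamma=0$;
\item $(K_Y+B_Y+E)^2=\frac1{42}$, computed via Lemma~\ref{lem:toric-intersections} as $(K_S+C_1+C_2)^2$ plus the self-intersection of the sum of the $a=0$ curves;
\item $E$ meets $B_Y$ transversally at one smooth point.
\end{itemize}
Enumerating subdivisions by chains of rays with $p+q=7$ should leave exactly two configurations. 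One is the $(I_2^2)$ subdivision by $(5,2),(2,5)$ from Proposition~\ref{prop:construction}. The other is an $(A_2^6)$ one over the node of $C_{(6)}$ (which forces $C_{(6)}$ nodal, so Lemma~\ref{lem:rank-one-normal-forms} applies). In both, one of $\tilde C_i$ is $E$ and the other is $B_Y$. This enumeration is the main combinatorial obstacle, but it is finite because the volume condition bounds the number and determinants of the rays.

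\emph{Step 3: identify the ample model.} In the normal forms of Lemma~\ref{lem:rank-one-normal-forms}, write down sections of $m(K_S+C_1+C_2)$ for $m=2,6,7,9$ whose orders along the exceptional curves, computed via Lemma~\ref{lem:toric-orders}, are large enough that they lift to sections of $m(K_Y+B_Y+E)$. These give a map $Y\dashrightarrow\bP(2,6,7,9)$, and checking one relation in degree $18$ shows the image is $X_{18}$. To see that the image is the ample model, compare volumes: $X_{18}$ has $(\mathcal O(1))^2=\frac{18}{2\cdot6\cdot7\cdot9}=\frac1{42}$. A degree argument then shows the map is birational and $K_Y+B_Y+E$ is the pullback of $\mathcal O(1)$, so the map is the morphism to the ample model. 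Since $X$ is the ample model of $K_Y+B_Y+E$, we get $X\simeq X_{18}$. Finally, the section of degree $7$ vanishing on $B_Y$ (coming from $\frac17(C_1+C_2)$, since $7(K_S+\Delta)\sim0$) is $t$, so $B\simeq B_{18}=(t=0)$. The $(I_2^2)$ normal form has no moduli left, and the two intersection points are exchanged by $x_2\mapsto -x_2$, so the result is unique. The hardest part is Step 3: choosing the explicit generators and verifying the relation, with Step 2's enumeration as the secondary risk.
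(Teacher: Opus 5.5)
Your outline follows the paper's route, but Step~2 rests on a false claim that hides the hardest part of the classification. You assert that every $h$-exceptional curve lies over a point of $C_1\cap C_2$ (or the node of $C_{(6)}$). In both types $(A_2^6)$ and $(I_2^2)$, however, the cyclic quotient points $Q_2$ and $Q_3$ of $\bP(1,2,3)$ lie on the boundary. After adding auxiliary curves through them so that Lemma~\ref{lem:crepant-toroidal} applies, Lemma~\ref{lem:local-toric-discrepancy} gives crepant rays $(7,-3)$ over $Q_2$, and $(7,-4)$ and $(14,-9)$ over $Q_3$. Their coefficients in $h^*(K_S+C_1+C_2)-(K_Y+B_Y+E)$ are $\frac12,\frac13,\frac23$ rather than $1$, so your volume formula also presupposes that these curves are absent.

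Two of them are easy to exclude. Nefness of $L:=K_Y+B_Y+E$ on the strict transform of $C_{(1)}$ or $C_{(3)}$ rules out $(7,-3)$, and the volume drop rules out $(14,-9)$. The third is not. The ray $(7,-4)$ together with $(5,2),(3,4)$ (or $(4,3),(2,5)$) over $P$ has total volume drop $\frac1{42}+\frac5{42}=\frac17$. It makes $L$ nef with $L\cdot E=0$ and $L\cdot B_Y=\frac16$, and keeps $B_Y,E$ meeting transversally at one point, so it passes every test in your list. Excluding it needs the fact that $E$ is the \emph{only} $L$-trivial curve, together with a genuinely new input. The paper takes $\Gamma=(x_2-x_1^2=0)$, which passes through $Q_3$ and $P$, computes $\ord_{F_7}(\Gamma)=\frac23$, and finds $L\cdot\Gamma_Y=\frac13-\frac13\cdot\frac17-\frac27=0$, contradicting uniqueness. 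Without this step your enumeration is incomplete, and Step~3 is applied to a list not shown to be exhaustive.

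Step~1 has two further gaps. First, a Mori fibre space need not have $\rho(S)=1$. You must exclude a curve as base, which the paper does with \cite[Lemma~3.4]{LiuShokurov2023optimal}, using that the boundary coefficients are at least $\frac67>\frac23$. Second, your reason that $B_Y$ and $E$ survive is invalid. $\delta$ counts exceptional divisors as well, so contracting $E$ would leave $\delta(S,\Delta)=2$. It would, however, destroy the hypothesis that $(S,\Delta)$ is $\frac17$-lt at closed points, which Theorem~\ref{thm:rank-one-exceptional} requires. With your scaling $-\epsilon(B_Y+E)$, $E$ is $(B_Y+E)$-nonpositive on $Y$ but could a priori become positive after curves meeting it are contracted. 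The paper instead runs a $(K_Y+\Delta_Y-\epsilon E)$-MMP, which never contracts $E$. It keeps $B_Y$ via $h$-ampleness of $K_Y+B_Y+E$ and the negativity lemma; alternatively, $B_Y^2=\frac76-7(1-c)\geq\frac16>0$ suffices.
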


We reduce an equality pair to one of two rank-one pairs as in Theorem \ref{thm:rank-one-exceptional} and determine the possible extractions. We then use explicit sections to identify the ample model and its boundary.

\subsection{The model \texorpdfstring{$(X_{18}, B_{18})$}{(X18, B18)}}

\begin{proposition}\label{prop:X18-structure}
    The hypersurface $X_{18}$ defined in Theorem \ref{thm:uniqueness} is a normal projective surface and $B_{18}$ is a smooth rational curve. Moreover, 
\begin{align*}
    (K_{X_{18}}+B_{18})^2&=\frac{1}{42}.
\end{align*}
\end{proposition}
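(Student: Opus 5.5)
The plan is to treat $X_{18}$ as a well-formed weighted hypersurface, so that the adjunction formula $K_{X_{18}}=\mathcal O_{X_{18}}(18-2-6-7-9)=\mathcal O_{X_{18}}(-6)$ holds. The volume then reduces to a degree computation. Write $F=z^2+y^2(y-x^3)+x^2t^2$.

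First, $F$ is irreducible: as a polynomial in $z$ it is $z^2+g$, and $g=y^2(y-x^3)+x^2t^2$ is not minus a square. For normality, $X_{18}$ is a hypersurface, hence Cohen--Macaulay, so it suffices to check that its singular locus is finite. On the affine cone we need the common zeros of $F$ and its partials:
\begin{align*}
    \partial_z F&=2z, &
    \partial_t F&=2x^2t,\\
    \partial_y F&=y(3y-2x^3), &
    \partial_x F&=-3x^2y^2+2xt^2.
\end{align*}
\begin{itemize}
    \item If $x=0$, then $z=0$ and $F=y^3$ force $y=0$. This gives the single point $[0:0:1:0]$.
    \item If $x\neq0$, then $t=0$ and $z=0$. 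Next $\partial_xF=-3x^2y^2=0$ gives $y=0$. This gives the single point $[1:0:0:0]$.
\end{itemize}
The ambient space $\mathbb P(2,6,7,9)$ is singular along two lines: $(t=z=0)$, where $\gcd(2,6)=2$, and $(x=t=0)$, where $\gcd(6,9)=3$. The hypersurface meets each of these lines in finitely many points; for instance, $F|_{t=z=0}=y^2(y-x^3)$. Hence $X_{18}$ has only isolated singularities, so it is normal. The same finiteness shows $X_{18}$ contains no codimension-one singular stratum of the ambient space, i.e.\ the embedding is well formed. This justifies the adjunction formula above.

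Since $B_{18}\in|\mathcal O_{X_{18}}(7)|$, we get $K_{X_{18}}+B_{18}\sim\mathcal O_{X_{18}}(1)$. Therefore
\begin{align*}
    (K_{X_{18}}+B_{18})^2=\frac{18}{2\cdot6\cdot7\cdot9}=\frac{1}{42}.
\end{align*}

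For $B_{18}$, note that it is the curve $\{z^2=y^2(x^3-y)\}\subset\mathbb P(2_x,6_y,9_z)$. I would parametrize it by $\mathbb P(2,3)_{x,s}\simeq\mathbb P^1$ via
\begin{align*}
    [x:s]\longmapsto[x:x^3-s^2:s(x^3-s^2)].
\end{align*}
The inverse is $s=z/y$ wherever $y\neq0$. The only point with $y=0$ is $[1:0:0]$. Its preimages are $[1:\pm1]$, and these coincide in $\mathbb P(2,3)$ via $\lambda=-1$. So the map is bijective, and it remains to check that $B_{18}$ is normal, i.e.\ smooth.

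I expect this smoothness check, at $[1:0:0]$ and at the orbifold points of $\mathbb P(2,6,9)$, to be the only delicate step. At $[1:0:0]$, work in the chart $x=1$: it is $\mathbb A^2_{y,z}/\mu_2$ with weights $(0,1)$. The cover curve $z^2=y^2(1-y)$ is nodal there, but the invariant ring is generated by $y$ and $w=z^2$. The relation $w=y^2(1-y)$ shows the quotient is the smooth curve $\mathbb A^1_y$. The other orbifold points lie off the curve or at smooth points. At $[0:1:0]$ we have $z^2=-1\neq0$, and $[0:0:1]$ is not on the curve. Finally, at points with $y\neq0$ the parametrization is an isomorphism. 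Hence $B_{18}\simeq\mathbb P^1$.
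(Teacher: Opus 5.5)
Your argument is correct. For normality and the volume it follows the paper closely: the same Jacobian computation of the singular locus $\{y=t=z=0\}\cup\{x=y=z=0\}$ of the affine cone, and the same degree count $18/(2\cdot6\cdot7\cdot9)$.

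On normality, the paper concludes on the cone itself (a hypersurface in $\bA^4$, singular in codimension two, hence normal by Serre's criterion) and then passes to $\operatorname{Proj}$. You conclude on $X_{18}$ directly. That also works, but ``hypersurface, hence Cohen--Macaulay'' needs justifying here. $X_{18}$ is not a hypersurface in a smooth ambient space; it is locally a $\mu_a$-quotient of an affine hypersurface, and such quotients remain Cohen--Macaulay. Also, $\bP(2,6,7,9)$ has the isolated index-$7$ point $[0:0:1:0]$ besides the two lines. This is harmless, since that point already lies in your finite set.

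Your well-formedness check is a useful addition. The paper uses $K_{X_{18}}+B_{18}\sim_{\bQ}\cO_{X_{18}}(1)$ without comment. The finiteness of $X_{18}\cap\operatorname{Sing}\bP(2,6,7,9)$, together with normality, is what makes adjunction legitimate for this non-quasi-smooth hypersurface.

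The genuine difference is in proving $B_{18}\simeq\bP^1$. The paper uses Veronese isomorphisms $\bP(2,6,9)\simeq\bP(2,2,3)\simeq\bP(1,1,3)$, after which $B_{18}$ becomes the graph $z=-y^2(y-x)$. You instead give an explicit bijective parametrization from $\bP(2,3)$ and check smoothness by hand at $[1:0:0]$, the one point where your inverse $s=z/y$ is undefined. These are the same phenomenon seen globally versus locally: your substitution $w=z^2$ in the $\mu_2$-chart, which turns the nodal cover curve into $\bA^1_y$, is exactly the paper's second Veronese step. The paper's route is shorter and needs no local verification; yours produces an explicit isomorphism.
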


\begin{proof}
Write
\begin{align*}
    X_{18}=(F=0)\subseteq \bP(2,6,7,9),
    \qquad
    F=z^2+y^2(y-x^3)+x^2t^2,
\end{align*}
where $\mathrm{wt} (x,y,t,z)=(2,6,7,9)$.

A direct computation shows that the singular locus of the affine cone $(F=0)\subset\bA^4$ is $\{y=t=z=0\}\cup\{x=y=z=0\}$. This set has codimension two in the cone. Therefore $(F=0)\subseteq \bA^4$ is regular in codimension one. Since a hypersurface is Cohen--Macaulay, it satisfies $S_2$. Hence the cone is normal by Serre's criterion. Note that $F$ is irreducible. Thus $R:=\bC[x,y,t,z]/(F)$ is a integrally closed domain. With respect to the weighted grading, the degree zero part of $R$ is $R_0=\bC$. Hence $R$ satisfies the
hypotheses of \cite[Lemma~5.23]{Miyanishi94}, and thus $X_{18}=\operatorname{Proj}R$ is normal.

    We then show that $B_{18}$ is smooth. Using the isomorphism $\operatorname{Proj}A\simeq \operatorname{Proj}A^{(d)}$ for the $d$-th Veronese subring $A^{(d)}$, first with $d=3$ and then with $d=2$, and relabeling the resulting homogeneous generators, we obtain
    \begin{align*}
        B_{18}=&(z^2+y^2(y-x^3)=0) \subseteq \bP(2,6,9)\\
        \simeq &(z^2+y^2(y-x)=0) \subseteq \bP(2,2,3)\\
        \simeq &(z+y^2(y-x)=0) \subseteq \bP(1,1,3).
    \end{align*}
    Thus $B_{18}\simeq~\mathrm{Proj} \bC[x,y,z]/(z+y^2(y-x))$ with $\mathrm{wt}(x,y,z)=(1,1,3)$. Since
    \begin{align*}
        \bC[x,y,z]/(z+y^2(y-x)) \simeq \bC[x,y,y^2(y-x)]\simeq \bC[x,y],
    \end{align*}
    we have $B_{18}\simeq \bP^1$.

    Since $\cO_{\bP(2,6,7,9)}(1)$ is $\bQ$-Cartier and
    \begin{align*}
        \cO_{X_{18}}(K_{X_{18}}+B_{18})&\sim_\bQ \cO_{X_{18}}(1),
    \end{align*}
the remaining statements follow from

\begin{align*}
    \cO_{X_{18}}(1)^2=\frac{18}{2\cdot 6\cdot 7\cdot 9}=\frac{1}{42}.
\end{align*}
\end{proof}

\subsection{A birational morphism to a rank-one model}

\begin{lemma}\label{lem:rank-one-reduction}
Let $(X,B)$ be an equality pair. Let $f\colon Y\to X$ be the morphism in Lemma \ref{lem:boundary-center-extraction} and put $m:=\mathrm{ord}_E(B)$. Then $m=1$, and there are a rank-one surface $S$, two curves $C_1,C_2\subset S$, and a projective birational morphism $h\colon Y\to S$ such that $(S,\frac{6}{7}(C_1+C_2))$ is of type $(A_2^6)$ or $(I_2^2)$ and
\begin{align*}
    K_Y+\frac{6}{7}(B_Y+E)
    =h^*\left(K_S+\frac{6}{7}(C_1+C_2)\right).
\end{align*}
\end{lemma}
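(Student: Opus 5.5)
We first determine $m$ and then run an MMP on the crepant pull-back of the log Calabi--Yau pair $(X,\frac{6}{7}B)$.

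\textbf{Step 1: $m=1$.} By Lemma~\ref{lem:boundary-center-extraction}, $a(E,X,B)=0$, so $m=\ord_E(B)=a(E,X,0)$. Hence
\begin{align*}
    K_Y+\tfrac{6}{7}B_Y+\bigl(1-\tfrac{m}{7}\bigr)E=f^*\bigl(K_X+\tfrac{6}{7}B\bigr)\sim_{\bQ}0
\end{align*}
by Theorem~\ref{thm:equality-properties}(1). By Lemma~\ref{lem:crepant-preserve}, the pair $(Y,\frac{6}{7}B_Y+(1-\frac{m}{7})E)$ is klt, exceptional, and has $\delta=2$.

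To force $m=1$, compute $m$ from the chain $\tilde B - F_1 - \cdots - F_n$ together with the two $(-2)$-curves $G_1,G_2$ in the proof of Lemma~\ref{lem:boundary-center-extraction}. Intersecting \eqref{eq:crepant-boundary-minimal-resolution} with each exceptional curve gives linear equations for the discrepancies. The log discrepancy of $F_1$ with respect to $(X,0)$ equals its coefficient in $h^*B$, and this is bounded by $1$, with equality only in a restricted configuration.

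A second approach is available. Restrict $K_Y+B_Y+E$ to $E$. Since $E\cdot B_Y=1$ at a smooth point and $(K_Y+B_Y+E)\cdot E=0$, adjunction on $E\simeq\bP^1$ gives $\mathrm{Diff}_E(B_Y)=y+\frac12 q_1+\frac12 q_2$, coming from the two $(-2)$-chains. The coefficient $1-\frac{m}{7}$ must lie in $\Phi_{\mathrm m}$ for Theorem~\ref{thm:rank-one-exceptional} to apply later. Since $m\ge 1$ is an integer, this forces $1-\frac m7\ge\frac67$ or $1-\frac m7\in\Phi_{\mathrm{sm}}$, i.e.\ $m=1$ in the first case or $m\in\{7/2,\dots\}$, which is not integral except in trivial cases. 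Together with $a(E,X,0)\le 1$, this gives $m=1$. Consequently $K_Y+\frac67(B_Y+E)\sim_{\bQ}0$.

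\textbf{Step 2: reduction to rank one.} Run a $K_Y$-MMP, equivalently a $-\frac67(B_Y+E)$-MMP, on $Y$. Every step is crepant for $K_Y+\frac67(B_Y+E)\equiv0$, so the pair stays klt and exceptional with $\delta=2$ (Lemma~\ref{lem:crepant-preserve}). The two divisors with $a\le\frac17$ are exactly $B_Y$ and $E$, each with log discrepancy $\frac17$. A contracted curve $C$ satisfies $K_Y\cdot C<0$, hence $(B_Y+E)\cdot C>0$.

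We must check that neither $B_Y$ nor $E$ is contracted. If one of them were contracted, it would become exceptional with log discrepancy $\frac17$ over the image. By Theorem~\ref{thm:delta-leq-2}, that would leave only one boundary component of coefficient $\frac67$. I expect to exclude this through the $\frac17$-lt condition and the structure of the remaining steps, but this may need care.

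The program ends either with $\rho(S)=1$ or with a Mori fibre space over a curve. The fibration case is excluded because the pair is exceptional: an exceptional pair cannot have a fibration with $K+\Delta\equiv0$ whose horizontal part has coefficients $\frac67$. Alternatively, \cite[Corollary~7.1]{LiuShokurov2023optimal} applies with $b=\frac67$.

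\textbf{Step 3: identification and the main obstacle.} On $S$ we then have $\Delta=\frac67(C_1+C_2)$ with $F=0$, the pair is $\frac17$-lt at closed points, and $\delta=2$. Theorem~\ref{thm:rank-one-exceptional} therefore places it in type $(A_2^6)$ or $(I_2^2)$, and $h$ is the composite of the MMP contractions.

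The main obstacle is Step 1 (showing $m=1$) together with the verification in Step 2 that $B_Y$ and $E$ survive the MMP. For Step 1, I would first try the explicit discrepancy computation on the minimal resolution graph, namely a chain ending in a $D$-type fork.
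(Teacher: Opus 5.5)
Your proposal has two genuine gaps. The first is in Step~1: the claim that $m\geq 1$ is an integer is false. Here $m=\ord_E(B)=a(E,X,0)$ is a positive rational number. Since $E$ comes from a curve in the minimal resolution of the klt point $x$, one only knows $0<m\leq 1$, with $m=1$ exactly when $x$ is a du Val point. Nothing in the local structure forces this, so your resolution-graph approach cannot close the gap either. The second approach is circular and also vacuous: requiring $1-\frac{m}{7}\in\Phi_{\mathrm m}$ so that Theorem~\ref{thm:rank-one-exceptional} applies later imposes no condition, because $m\leq 1$ already gives $1-\frac{m}{7}\geq\frac{6}{7}$. The paper works in the opposite order. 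It keeps $\Delta_Y=\frac{6}{7}B_Y+(1-\frac{m}{7})E\in\Phi_{\mathrm m}$ with $m\in(0,1]$ arbitrary and performs the reduction to rank one. Only at the end does it read off $1-\frac{m}{7}=\frac{6}{7}$, hence $m=1$, from Theorem~\ref{thm:rank-one-exceptional}, whose only $F=0$ cases have $\Delta_S=\frac{6}{7}(C_1+C_2)$.

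The second gap is the one you flag in Step~2, and it is essential. An unconstrained $K_Y$-MMP may contract $E$ or $B_Y$. In that case the output is not $\frac{1}{7}$-lt at closed points, so the classification cannot be applied, and Theorem~\ref{thm:delta-leq-2} gives no contradiction. The missing idea is to run a $(K_Y+\Delta_Y-\epsilon E)$-MMP instead. Since $K_Y+\Delta_Y\equiv 0$, this is a $(-E)$-MMP. Every contracted curve meets $E$ positively, so $E$ is never contracted. Consequently every $h$-exceptional curve $C$ is not $f$-exceptional, and $(K_Y+B_Y+E)\cdot C=(K_X+B)\cdot f_*C>0$, so $K_Y+B_Y+E$ is $h$-ample. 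The pair $(S,\Delta_S)$ is klt with $\Delta_S\geq\frac{6}{7}h_*(B_Y+E)$ and $\frac{6}{7}>\frac{5}{6}$, so Lemma~\ref{lem:surface-one-gap} makes $(S,h_*(B_Y+E))$ lc. The negativity lemma then forbids contracting $B_Y$, since that would force $a(B_Y,S,h_*(B_Y+E))<0$. Once both curves survive, $\delta=2$ gives $\frac{1}{7}$-lt at closed points, as you say. For the fibration case, a degree count on a general fibre suffices: horizontal coefficients in $[\frac{6}{7},1)$ times positive integer degrees cannot sum to $2$. The paper cites \cite[Lemma~3.4]{LiuShokurov2023optimal} for this step instead.
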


\begin{proof}
By the proof of Theorem~\ref{thm:equality-properties}{\rm(3)}, we have
\begin{align}\label{eq:m-range}
    0<m=\ord_E(B)=a(E,X,0)\leq 1.
\end{align}

Set $\Delta_Y:=\frac{6}{7}B_Y+\left(1-\frac{m}{7}\right)E$, then $\Delta_Y\in \Phi_{\mathrm{m}}$. Subtracting $\frac{1}{7}f^*B$ from both sides of
\begin{align*}
    K_Y+B_Y+E=f^*(K_X+B),
\end{align*}
we obtain
\begin{align}\label{eq:rank-one-crepant-X}
    K_Y+\Delta_Y
    =
    f^*\left(K_X+\frac{6}{7}B\right)
    \sim_{\bQ}0.
\end{align}
By Theorem~\ref{thm:equality-properties}~{\rm(3)} and
\eqref{eq:rank-one-crepant-X}, the pair $(Y,\Delta_Y)$ is exceptional and $\delta(Y,\Delta_Y)=2$.

Run a $(K_Y+\Delta_Y-\epsilon E)$-MMP for a sufficiently small $\epsilon>0$, which terminates with a Mori fiber space $g\colon S\to Z$. Denote the induced morphism by $h\colon Y\to S$ and set $\Delta_S:=h_*\Delta_Y$. Since $K_Y+\Delta_Y\sim_{\bQ}0$,
\begin{align}\label{eq:rank-one-crepant-reduction}
    K_Y+\Delta_Y=h^*(K_S+\Delta_S).
\end{align}
Consequently, $(S,\Delta_S)$ is an exceptional log Calabi-Yau pair and $\delta(S,\Delta_S)=2$.

We next show that neither $E$ nor $B_Y$ is contracted by $h$. Since this MMP is also a $(-E)$-MMP, it does not contract $E$. Set $D_S:=h_*(B_Y+E)$. Since $(S,\Delta_S)$ is klt and $\Delta_S\geq\frac{6}{7}D_S$, Lemma \ref{lem:surface-one-gap} implies that $(S, D_S)$ is lc. Since $E$ is the unique $f$-exceptional curve and is not contracted by $h$, for any $h$-exceptional curve $C$, we have
\begin{align*}
    (K_Y+B_Y+E)\cdot C=(K_X+B)\cdot f_*C>0.
\end{align*}
Hence $K_Y+B_Y+E$ is $h$-ample. As both $(Y,B_Y+E)$ and $(S,D_S)$ are lc, the negativity lemma shows that $B_Y$ is not contracted by $h$.

We may now set $C_1:=h_*B_Y,C_2:=h_*E$ and write $\Delta_S=\frac{6}{7}C_1+(1-\frac{m}{7})C_2$.

We claim that $Z$ is a point. Since
\begin{align*}
    K_S+\Delta_S-\epsilon C_2
    \equiv-\epsilon C_2
\end{align*}
is anti-ample over $Z$, the curve $C_2$ is ample over $Z$. In particular, it is horizontal over $Z$ if $Z$ is a curve. Applying \cite[Lemma~3.4]{LiuShokurov2023optimal} with
\begin{align*}
    b:=\min\{\frac{6}{7},1-\frac{m}{7}\}
\end{align*}
would give
\begin{align*}
    b\leq\frac{2}{3},
\end{align*}
contradiction. Hence $Z$ is a point and $\rho(S)=1$.

The two components $C_1$ and $C_2$ have log discrepancies $\frac{1}{7}$ and $\frac{m}{7}$ in $\Delta_S$, both at most $\frac{1}{7}$. Since $\delta(S,\Delta_S)=2$, every divisor centered at a closed point of $S$ has log discrepancy greater than $\frac{1}{7}$. Therefore $(S,\Delta_S)$ is $\frac{1}{7}$-lt at closed points.

We can now apply the rank-one classification in Theorem \ref{thm:rank-one-exceptional}. Since the coefficients of $\Delta_S$ are at least $\frac{6}{7}$, the classification leaves precisely the two cases
\begin{align*}
    (A_2^6)
    \qquad\text{and}\qquad
    (I_2^2).
\end{align*}
In both cases, $\Delta_S=\frac{6}{7}(C_1+C_2)$. It follows that $m=1$ and 
\begin{align*}
    K_Y+\frac{6}{7}(B_Y+E)=h^*\left(K_S+\frac{6}{7}(C_1+C_2)\right).
\end{align*}
\end{proof}

\subsection{Local models and coordinates}

For each of the two rank-one pairs, we add auxiliary curves to obtain a toroidal pair. We then choose local coordinates for the computations below.

For $S=\mathbb P(1,2,3)_{[x_1:x_2:x_3]}$, the two singular weighted affine charts are
\begin{align*}
    U_2:=(x_2\neq0)
    \simeq
    \mathbb A^2_{x_1,x_3}/\mu_2(1,1),\qquad 
    U_3:=(x_3\neq0)
    \simeq
    \mathbb A^2_{x_1,x_2}/\mu_3(1,2).
\end{align*}
Here, by abuse of notation, we use the same symbols $x_i$ for the weighted homogeneous coordinates on $S$ and their pullbacks to the corresponding orbifold covers. Equations on $U_2$ and $U_3$ are always understood after dehomogenizing by $x_2=1$ and $x_3=1$ on the respective orbifold covers.

\begin{lemma}\label{lem:toroidal-completion}
Let $(S,\frac{6}{7}(C_1+C_2))$ be one of the rank-one exceptional pairs of type $(A_2^6)$ or $(I_2^2)$. Let $Q_2$ and $Q_3$ be the cyclic quotient singularities of types $\frac{1}{2}(1,1)$ and $\frac{1}{3}(1,2)$ on $S$, respectively. Then we may choose general members
\begin{align*}
    C_{Q_2}\in |\mathcal O_S(3)|,
    \qquad
    C_{Q_3}\in |\mathcal O_S(2)|
\end{align*}
through $Q_2$ and $Q_3$, respectively, such that $(S, D)$ is a toroidal pair, where $D:=C_1+C_2+C_{Q_2}+C_{Q_3}$.
\end{lemma}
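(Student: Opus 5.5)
We need $(S,D)$ to be toroidal at every point of $S$. Away from $\Supp D$ the pair is toroidal because $S$ is smooth there ($\mathrm{Sing}(S)=\{Q_2,Q_3\}$ will lie on $D$). At a point of $D$, the pair is toroidal exactly when one of two local conditions holds: either $S$ is smooth and $D$ is a smooth branch or two transverse smooth branches (snc), or $S$ has a cyclic quotient singularity and $D$ is the image of the two coordinate axes in suitable orbifold coordinates. So the plan is to choose $C_{Q_2},C_{Q_3}$ so that $D$ is nodal in the smooth locus, and so that at $Q_2,Q_3$ the divisor $D$ consists of exactly two branches which are images of transverse smooth curves on the orbifold cover. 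We then verify this case by case using the normal forms of Lemma~\ref{lem:rank-one-normal-forms} (and the general forms of Theorem~\ref{thm:rank-one-exceptional} if $C_{(6)}$ is not nodal).

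\emph{Step 1: the singular points.} Use the charts $U_2=\bA^2_{x_1,x_3}/\mu_2(1,1)$ and $U_3=\bA^2_{x_1,x_2}/\mu_3(1,2)$, with $Q_2=[0:1:0]$ and $Q_3=[0:0:1]$.

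In case $(A_2^6)$:
\begin{itemize}
\item $C_{(6)}$ avoids both singular points, and $C_{(1)}=(x_1=0)$ passes through both.
\item At $Q_2$ take $C_{Q_2}=(x_3-cx_1^3=0)$ with $c$ general. Its lift $x_3=cx_1^3$ is smooth and transverse to $x_1=0$ on the cover.
\item At $Q_3$ take $C_{Q_3}=(x_2-cx_1^2=0)$. Its lift $x_2=cx_1^2$ is transverse to $x_1=0$.
\item In both cases the pair $(f_1,f_2)$ gives orbifold parameters in the sense of Definition~\ref{def:orbifold-parameters}, up to ordering.
\end{itemize}

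In case $(I_2^2)$:
\begin{itemize}
\item $C_{(3)}=(x_3=0)$ passes through $Q_2$. The curve $C_{(4)}$ passes through $Q_3$, since at $x_1=x_2=0$ its equation vanishes. It does not pass through $Q_2$, because the $x_2^2$ term is present.
\item On $U_3$ the equation of $C_{(4)}$ is $x_2^2-x_1^4-x_1=0$, whose lift is smooth at the origin with tangent $x_1=0$. Choose $C_{Q_3}=(x_2-cx_1^2=0)$; its lift is $x_2=cx_1^2$, with tangent $x_2=0$, hence transverse. The semi-invariance weights are correct, since $x_1$ has weight $1$ and $x_2^2-x_1^4-x_1$ has weight $1$ modulo $3$.
\item On $U_2$ use $C_{(3)}=(x_3=0)$ and $C_{Q_2}=(x_1^3+\dots=0)$; more simply, take a general member of $|\cO(3)|$ through $Q_2$, i.e. $x_1^3+ax_1x_2=0$ with $a\ne0$. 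Its lift is $x_1(x_1^2+a)=0$, which near $Q_2$ is $x_1=0$ and is transverse to $x_3=0$.
\item We must check that no other component of $D$ passes through $Q_2$ or $Q_3$, so that $D$ has exactly two branches there. This holds by the observations above.
\end{itemize}

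\emph{Step 2: the smooth locus.} We need $D$ to be snc on $S\setminus\{Q_2,Q_3\}$. By Theorem~\ref{thm:rank-one-exceptional}, $C_1+C_2$ already has normal crossings at smooth points, although $C_{(6)}$ may be nodal, and a node is allowed. Next we show that the auxiliary curves are smooth away from $Q_i$ and meet every other component transversely, with no triple points. The linear systems $\{x_3-cx_1^3\}$, $\{x_1^3+ax_1x_2\}$ and $\{x_2-cx_1^2\}$ are one-parameter families. Their base loci consist of $Q_i$ together with points on $C_{(1)}$ or $C_{(3)}$, which we handle explicitly. Off the base locus, Bertini (in a pencil version, with the finitely many bad parameters excluded) gives transversality to the fixed curves $C_1$, $C_2$, the other auxiliary curve, and the node of $C_{(6)}$, and it also rules out triple points.

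The main obstacle is this genericity step. The relevant systems are only pencils with base points, so Bertini must be applied carefully. For instance, $C_{Q_2}$ in case $(A_2^6)$ meets $C_{(1)}$ only at $Q_2$, since $x_1=0$ forces $x_3=0$. We also need to check tangency with $C_{(6)}$ at each intersection point. I would handle this by computing intersections directly: restrict the pencil parameter $c$ to $C_{(6)}$ and observe that tangency occurs for only finitely many $c$, because the map $C_{(6)}\to\bP^1$ given by $x_3/x_1^3$ (respectively $x_2/x_1^2$) has finitely many ramification points.
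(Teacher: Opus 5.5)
The gap is in your choice of $C_{Q_2}$ in the $(I_2^2)$ case. Your treatment of the $(A_2^6)$ case, and of $Q_3$ in the $(I_2^2)$ case, is fine. Restricting to pencils such as $x_3-cx_1^3$ and $x_2-cx_1^2$ is harmless there, since their base loci are just $Q_2$ and $Q_3$.

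\textbf{What fails.} The curve $(x_1^3+ax_1x_2=0)$ is not a general member of $|\mathcal O_S(3)|$. It factors globally as $x_1(x_1^2+ax_2)=0$, so it is reducible. Both components $(x_1=0)$ and $(x_1^2+ax_2=0)$ pass through $Q_3=[0:0:1]$. You only examined the factorization $x_1(x_1^2+a)$ in the chart $x_2=1$, where the second factor is a unit, and missed this.

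Consequently $D$ has four branches at $Q_3$: $C_{(4)}$, $C_{Q_3}$, $(x_1=0)$ and $(x_1^2+ax_2=0)$. On the $\mu_3$-cover, $(x_1=0)$ is tangent to $C_{(4)}=(x_2^2-x_1^4-x_1=0)$, and $(x_1^2+ax_2=0)$ is tangent to $(x_2-cx_1^2=0)$. So $(S,D)$ is not toroidal at $Q_3$. In particular:
\begin{itemize}
\item your assertion that no further component of $D$ passes through $Q_2$ or $Q_3$ is false in this case;
\item your Bertini step also breaks, because this family has the fixed component $(x_1=0)$ rather than a finite base locus.
\end{itemize}

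\textbf{The fix.} Keep the $x_3$-term, as the paper does. Take $C_{Q_2}=(x_3+\alpha x_1x_2+\beta x_1^3=0)$ with $\alpha\neq0$ and $\alpha,\beta$ general, in both cases. Then:
\begin{itemize}
\item since the coefficient of $x_3$ is nonzero, $C_{Q_2}$ misses $Q_3$;
\item on $U_2$ its lift is $x_3+\alpha x_1+\beta x_1^3$, which together with $x_3$ (resp.\ $x_1$ in the $(A_2^6)$ case) is a $\mu_2$-equivariant regular system of parameters, because $\alpha\neq0$;
\item $\mathrm{Bs}|\mathcal O_S(3)|=\{Q_2\}$, so Bertini applies on $S\setminus\{Q_2,Q_3\}$.
\end{itemize}
With this change your argument becomes essentially the paper's proof.
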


\begin{proof}
Choose weighted homogeneous coordinates $S=\bP(1,2,3)_{[x_1:x_2:x_3]}$ as in Lemma~\ref{lem:rank-one-normal-forms}. Thus
\begin{align*}
    Q_2=[0:1:0],
    \qquad
    Q_3=[0:0:1].
\end{align*}

We also write $C_{(k)}$ for the degree-$k$ curve in the boundary. Since
\begin{align*}
    H^0(S,\mathcal O_S(3))=\mathrm{Span}_\bC
    \{x_1^3,x_1x_2,x_3\}, \qquad
    H^0(S,\mathcal O_S(2))=\mathrm{Span}_\bC
    \{x_1^2,x_2\},
\end{align*}
we have
\begin{align*}
    \mathrm{Bs}|\cO_{S}(3)|=\{Q_2\},\qquad 
    \mathrm{Bs}|\cO_{S}(2)|=\{Q_3\}.
\end{align*}
Choose
\begin{align*}
    C_{Q_2}:=
    \left(
        x_3+\alpha x_1x_2+\beta x_1^3=0
    \right), \qquad
    C_{Q_3}:=
    \left(
        x_2+\gamma x_1^2=0
    \right),
\end{align*}
where $\alpha\neq0$ and $\alpha,\beta,\gamma$ are otherwise general. Then $C_{Q_2}$ contains $Q_2$ but not $Q_3$, while $C_{Q_3}$ contains $Q_3$ but not $Q_2$.

We first show that $(S, D)$ is toroidal at $Q_2$ and $Q_3$. On the orbifold chart $U_2$, the pullback of $C_{Q_2}$ is defined by
\begin{align*}
    C_{Q_2}=\left(x_3+\alpha x_1+\beta x_1^3=0\right).
\end{align*}
In the $(A_2^6)$ case,  the unique component of $C_1+C_2$ through $Q_2$ is $C_{(1)}=(x_1=0)$, and $(x_1,x_3+\alpha x_1+\beta x_1^3)$ is a $\mu_2$-equivariant regular system of parameters at the origin of the orbifold cover. In the $(I_2^2)$ case, the corresponding component is $C_{(3)}=(x_3=0)$, and $(x_3,x_3+\alpha x_1+\beta x_1^3)$ is a $\mu_2$-equivariant regular system of parameters at the origin of the orbifold cover, because $\alpha\neq0$.

On the orbifold chart $U_3$, the pullback of $C_{Q_3}$ is defined by
\begin{align*}
    C_{Q_3}=\left(x_2+\gamma x_1^2=0\right).
\end{align*}
In the $(A_2^6)$ case, the unique component of $C_1+C_2$ through $Q_3$ is $C_{(1)}=(x_1=0)$, and $(x_1, x_2+\gamma x_1^2)$ is a $\mu_3$-equivariant regular system of parameters at
the origin of the orbifold cover. In the $(I_2^2)$ case, the corresponding component is $C_{(4)}=(x_2^2-x_1^4-x_1=0)$. Since $x_2^2-x_1^4-x_1$ has linear term $-x_1$ on the orbifold cover, the ordered pair $(x_2^2-x_1^4-x_1,x_2+\gamma x_1^2)$ is a $\mu_3$-equivariant regular system of parameters at the origin of the orbifold cover.

Thus, in each case, the displayed pair defines a $\mu_r$-equivariant \'etale coordinate system on the orbifold cover, with the two branches of $D$ corresponding to the coordinate axes. Hence it induces the required local toric model of $(S,D)$ at $Q_2$ or $Q_3$, respectively.

Away from $Q_2$ and $Q_3$, both linear systems are base-point-free. By Bertini's theorem and the generality of $\alpha,\beta,\gamma$, the curves $C_{Q_2}$ and $C_{Q_3}$ are smooth there, meet each other and $C_1+C_2$ transversely at smooth points of $S$, and avoid the zero-dimensional strata of $C_1+C_2$. We may also assume that no point lies on three analytic branches of $D$. Together with the preceding local calculations at $Q_2$ and $Q_3$, $(S, D)$ is toroidal at all zero-dimensional strata of $(S, D)$. Since $S\setminus D$ is smooth, $(S, D)$ is a toroidal pair.
\end{proof}

\begin{remark}[Local models at cyclic quotient singularities]\label{rem:local-model-singularities}
    Fix the boundary $D$ and the local models constructed in the proof of Lemma~\ref{lem:toroidal-completion}.

    On the orbifold chart $x_2=1$, put $v:=x_3+\alpha x_1+\beta x_1^3=0$. The pairs of ordered orbifold parameters are $(x_1,v)$ in the $(A_2^6)$ case and $(x_3,v)$ in the $(I_2^2)$ case. The primitive generators of the corresponding rays in the local model are $(2,-1)$ and $(0,1)$ in this order.

    On the orbifold chart $x_3=1$, put $v:=x_2+\gamma x_1^2$. The pairs of ordered orbifold parameters are $(x_1,v)$ in the $(A_2^6)$ case and $(G_4,v)$ in the $(I_2^2)$ case. The primitive generators of the corresponding rays in the local model are $(3,-2)$ and $(0,1)$ in this order.

    In both cases, the first ray corresponds to the branch of $C_1+C_2$, and the second to the branch of the auxiliary curve $C_{Q_i}$.
\end{remark}

We next record analytic coordinates at the node of $C_{(6)}$ in the $(A_2^6)$ case. Choose weighted coordinates on $S=\bP(1,2,3)$ as in Lemma~\ref{lem:rank-one-normal-forms}, and set 
\begin{align*}
    u:=x_2-x_1^2,
    \qquad
    G_6:=x_3^2-3x_1^2u^2-u^3.
\end{align*}
Then
\begin{align*}
    C_{(1)}=(x_1=0),
    \qquad
    C_{(6)}=(G_6=0).
\end{align*}
The curve $C_{(6)}$ has a node at $P=[1:1:0]$.

\begin{lemma}\label{lem:A26-local-coordinates}
There are analytic coordinates $(r,w)$ near $P$ and an analytic unit $\theta$ defined near $P$ such that
\begin{align*}
    G_6=rw,
    \qquad
    u=\frac{w-r}{2\theta},
    \qquad
    x_3=\frac{r+w}{2}.
\end{align*}
In particular, the two analytic branches of $C_{(6)}$ at $P$ are defined by $r=0$ and $w=0$.
\end{lemma}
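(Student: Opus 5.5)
The plan is to work in the smooth affine chart $(x_1\neq0)\simeq\bA^2_{x_2,x_3}$ of $S=\bP(1,2,3)$. We dehomogenize by $x_1=1$, so that $P=[1:1:0]$ becomes the origin in the coordinates $(u,x_3)$, where $u=x_2-1$. Indeed $u=x_2-x_1^2$ restricts to $x_2-1$, and $(u,x_3)$ are affine coordinates on $\bA^2$ vanishing at $P$. In this chart
\begin{align*}
    G_6=x_3^2-3u^2-u^3=x_3^2-(3+u)u^2 .
\end{align*}
The key observation is that $3+u$ does not vanish at $P$. So we may choose an analytic square root $\theta:=\sqrt{3+u}$ on a small disc around $u=0$, with $\theta(P)=\sqrt3$. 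In particular $\theta$ is an analytic unit near $P$.

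We then define $r:=x_3-\theta u$ and $w:=x_3+\theta u$. The identities follow at once. First, $rw=x_3^2-\theta^2u^2=G_6$. Second, $r+w=2x_3$ and $w-r=2\theta u$, which give $x_3=\frac{r+w}{2}$ and $u=\frac{w-r}{2\theta}$, since $\theta$ is a unit.

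It remains to check that $(r,w)$ are analytic coordinates at $P$. Since $\theta u$ has differential $\sqrt3\,du$ at the origin, we get $dr=dx_3-\sqrt3\,du$ and $dw=dx_3+\sqrt3\,du$ at $P$. The Jacobian determinant with respect to $(u,x_3)$ is $2\sqrt3\neq0$, so the inverse function theorem applies. Finally, $G_6=rw$ with $r,w$ part of a coordinate system shows that $C_{(6)}$ has exactly the two smooth transversal analytic branches $(r=0)$ and $(w=0)$ at $P$, consistent with the node found in Lemma~\ref{lem:rank-one-normal-forms}.

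There is no real obstacle here. The only point requiring care is that $P$ lies in the smooth chart $x_1\neq0$, so ordinary analytic coordinates suffice and no orbifold cover is needed. The choice of branch of the square root is immaterial, since swapping it merely interchanges $r$ and $w$.
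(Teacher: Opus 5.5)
Your proposal is correct and follows essentially the same route as the paper. The paper also passes to the chart $x_1\neq0$, writes $G_6=x_3^2-u^2(3+u)$, takes $\theta=\sqrt{3+u}$, sets $r=x_3-\theta u$ and $w=x_3+\theta u$, and applies the inverse function theorem. The only difference is the sign of the Jacobian: with rows $(r,w)$ and columns $(u,x_3)$ the determinant is $-2\sqrt3$, but any nonzero value suffices.
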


\begin{proof}
    Consider the affine chart $U_1=(x_1\neq0)\subset S$, set $x_1=1$ and retain the notation $x_2,x_3$ for the affine coordinates.  The point $P$ is then given by $(u,x_3)=(0,0)$ and $(u,x_3)$ are analytic coordinates $P$. Moreover, on this chart,
\begin{align*}
    u=x_2-1, \qquad G_6=x_3^2-u^2(3+u).
\end{align*}
Since $(3+u)(P)=3\neq0$, after shrinking, choose an analytic unit $\theta$ such that $\theta^2=3+u$. Set
\begin{align*}
    r:=x_3-\theta u, \qquad w:=x_3+\theta u.
\end{align*}
Since
\begin{align*}
    \det\frac{\partial(r,w)}{\partial(u,x_3)}(0,0)=-2\theta(P)\neq 0,
\end{align*}
the analytic inverse function theorem shows that $(r,w)$ are analytic coordinates at $P$. Finally,
\begin{align*}
    G_6=rw,\qquad
    u=\frac{w-r}{2\theta},\qquad
    x_3=\frac{r+w}{2}.
\end{align*}
\end{proof}

We next record analytic coordinates at a crossing of $C_{(3)}$ and $C_{(4)}$ in the $(I_2^2)$ case. Choose weighted coordinates on $S=\bP(1,2,3)$ as in Lemma~\ref{lem:rank-one-normal-forms}, and set
\begin{align*}
    u:=x_2-x_1^2,
    \qquad
    G_4:=x_2^2-x_1^4-x_1x_3.
\end{align*}
Then
\begin{align*}
    C_{(3)}=(x_3=0),
    \qquad
    C_{(4)}=(G_4=0).
\end{align*}
The curves $C_{(3)}$ and $C_{(4)}$ meet transversely at
\begin{align*}
    P_+=[1:1:0],
    \qquad
    P_-=[1:-1:0].
\end{align*}
We consider $P=P_+$, as the point $P_-$ is obtained from $P_+$ by the involution $x_2\mapsto -x_2$, which preserves both curves.

\begin{lemma}\label{lem:I22-local-coordinates}
There are analytic coordinates $(r,w)$ at $P$ such that
\begin{align*}
    x_3=(2+r+w)r,
    \qquad
    G_4=(2+r+w)w,
    \qquad
    u=r+w.
\end{align*}
In particular,
\begin{align*}
    C_{(3)}=(r=0),
    \qquad
    C_{(4)}=(w=0)
\end{align*}
in a neighborhood of $P$.
\end{lemma}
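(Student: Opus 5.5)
The proof is an explicit change of coordinates on the affine chart $U_1=(x_1\neq0)$, modelled on the proof of Lemma~\ref{lem:A26-local-coordinates}. On $U_1$ I set $x_1=1$ and keep $x_2,x_3$ as affine coordinates. The chart $U_1$ is smooth, $P=P_+$ corresponds to $(x_2,x_3)=(1,0)$, and $(u,x_3)$ with $u=x_2-1$ are analytic (indeed algebraic) coordinates centered at $P$.

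First I rewrite $G_4$ in these coordinates:
\begin{align*}
    G_4=x_2^2-1-x_3=(u+1)^2-1-x_3=u(u+2)-x_3.
\end{align*}

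The required identities force the definitions of $r$ and $w$. Since $r+w=u$, the factor $2+r+w$ must equal $2+u$, which is an analytic unit near $P$ because it takes the value $2$ there. So I define
\begin{align*}
    r:=\frac{x_3}{2+u},\qquad w:=u-r.
\end{align*}
By construction $u=r+w$ and $x_3=(2+u)r=(2+r+w)r$. For the remaining identity,
\begin{align*}
    (2+r+w)w=(2+u)u-(2+u)r=(2+u)u-x_3=G_4.
\end{align*}

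Next I check that $(r,w)$ are coordinates at $P$. The linear parts at the origin are $r=\tfrac12x_3+\cdots$ and $w=u-\tfrac12x_3+\cdots$. So the Jacobian determinant of $(r,w)$ with respect to $(u,x_3)$ at $(0,0)$ equals $-\tfrac12$, which is nonzero. By the analytic inverse function theorem, $(r,w)$ form analytic coordinates on a neighborhood of $P$.

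Finally, since $2+r+w$ is a unit near $P$, the identities give $(x_3=0)=(r=0)$ and $(G_4=0)=(w=0)$ as germs. Hence $C_{(3)}=(r=0)$ and $C_{(4)}=(w=0)$ near $P$.

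There is no real obstacle. The only point needing care is to choose $r$ as $x_3$ divided by the unit $2+u$, rather than as $x_3$ itself, so that the common factor $2+r+w$ appears in both $x_3$ and $G_4$. The statement at $P_-$ follows by applying the involution $x_2\mapsto-x_2$, as noted before the lemma.
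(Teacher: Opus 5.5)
Your proof is correct and is essentially the paper's argument: on the chart $x_1=1$ the paper sets $r:=x_3/(2+u)$ and $w:=G_4/(2+u)$, and your $w=u-r$ is the same function because $u(2+u)-x_3=G_4$. The only difference is how the coordinate property is justified: the paper deduces it from $(x_3,G_4)$ already being coordinates at $P$, whereas you compute the Jacobian with respect to $(u,x_3)$ directly.
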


\begin{proof}
Consider the affine chart $U_1=(x_1\neq0)\subset S$, set $x_1=1$ and retain the notation $x_2,x_3$ for the affine coordinates. The point $P$ is then given by $(u,x_3)=(0,0)$ and $(u,x_3)$ are analytic coordinates $P$. Moreover, on this chart, 
\begin{align*}
    u=x_2-1, \qquad G_4=x_2^2-1-x_3=u(2+u)-x_3.
\end{align*}
Since $(2+u)(P)=2\neq 0$, after shrinking, the function $2+u$ is a unit near $P$. Set
\begin{align*}
    r:=\frac{x_3}{2+u},
    \qquad
    w:=\frac{G_4}{2+u}.
\end{align*}
Since $(x_3,G_4)$ are analytic coordinates at $P$, so are $(r,w)$. Moreover,
\begin{align*}
    x_3=(2+r+w)r,
    \qquad
    G_4=(2+r+w)w,
    \qquad
    u=r+w.
\end{align*}
In particular, $C_{(3)}=(r=0)$ and $C_{(4)}=(w=0)$ in this neighbourhood.
\end{proof}

We finally record the order of vanishing over $Q_3$ needed below. 

\begin{corollary}\label{cor:F7-order-u}
Let $F_7$ be the prime divisor over $S$ determined by the primitive vector $(7,-4)$ in the fixed local model at $Q_3$ from Remark~\ref{rem:local-model-singularities}. Then for $u=x_2-x_1^2$,
we have
\begin{align*}
    \ord_{F_7}(u)=\frac{2}{3}.
\end{align*}
\end{corollary}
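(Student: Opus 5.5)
The plan is to apply Lemma~\ref{lem:toric-orders} at $Q_3$ with $r=3$, $s=2$, and to expand $u$ in the ordered orbifold parameters fixed in Remark~\ref{rem:local-model-singularities}.

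\emph{Weights.} The rays there are $u_0=(3,-2)$ and $u_{n+1}=(0,1)$, and $F_7$ corresponds to $(p,q)=(7,-4)$. This vector lies in the interior of $\sigma$, since $7/3>0$ and $-4/7>-2/3$. Formula \eqref{eq:toric-order-formula} assigns to the first parameter $f_1$ the weight $\frac{p}{r}=\frac{7}{3}$. It assigns to the second parameter $f_2=v=x_2+\gamma x_1^2$ the weight $\frac{sp+rq}{r}=\frac{14-12}{3}=\frac{2}{3}$.

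\emph{Semi-invariance.} On the orbifold cover of $U_3$ we have $x_3=1$, $u=x_2-x_1^2$, and $\mu_3$ acts with weights $(1,2)$ on $(x_1,x_2)$. So $u$ is a semi-invariant of degree $2$, represents the section $u\in H^0(S,\cO_S(2))$, and the lemma applies to it.

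\emph{Case $(A_2^6)$.} Here $(f_1,f_2)=(x_1,v)$, and
\begin{align*}
    u=v-(1+\gamma)x_1^2 .
\end{align*}
The monomials that occur are $v$, of weight $\frac{2}{3}$, and possibly $x_1^2$, of weight $\frac{14}{3}$. The minimum is $\frac{2}{3}$.

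\emph{Case $(I_2^2)$.} Here $f_1=G_4=x_2^2-x_1^4-x_1$ after dehomogenizing. Substituting $x_2=v-\gamma x_1^2$ gives $G_4=-x_1+(\text{terms of order}\geq2\text{ in }x_1,v)$. By the analytic implicit function theorem we can therefore solve
\begin{align*}
    x_1=-G_4+\phi(G_4,v),\qquad \phi\in (G_4,v)^2 .
\end{align*}
Since $(G_4,v)$ are $\mu_3$-equivariant parameters, this is an identity of semi-invariant power series. Then
\begin{align*}
    u=v-(1+\gamma)x_1^2=v+\psi(G_4,v),\qquad \psi\in(G_4,v)^2 .
\end{align*}
So the monomial $f_2=v$ occurs with coefficient exactly $1$. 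Every other monomial $G_4^{a_1}v^{a_2}$ in the expansion has either $a_1\geq1$, with weight at least $\frac{7}{3}$, or $a_1=0$ and $a_2\geq2$, with weight at least $\frac{4}{3}$. Hence the minimum in \eqref{eq:toric-order-formula} is again $\frac{2}{3}$.

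\emph{Main point to check.} The only delicate step is justifying that the coefficient of $v$ cannot be cancelled in the $(I_2^2)$ expansion. This holds because $x_1^2\in(G_4,v)^2$ contributes no linear terms. Lemma~\ref{lem:toric-orders} is applied to convergent power series expansions in the orbifold parameters, exactly as in its proof. This gives $\ord_{F_7}(u)=\frac{2}{3}$ in both cases.
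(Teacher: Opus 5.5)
Your proof is correct and follows essentially the same route as the paper. Both arguments apply Lemma~\ref{lem:toric-orders} with weights $(\frac{7}{3},\frac{2}{3})$ and use that $x_1$ lies in the maximal ideal generated by the orbifold parameters, so $(1+\gamma)x_1^2$ only contributes terms of weight at least $\frac{4}{3}$. The paper phrases this as $\ord_{F_7}(u-v)>\ord_{F_7}(v)=\frac{2}{3}$ rather than expanding $u$ explicitly via the implicit function theorem, but the content is the same.
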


\begin{proof}
By Lemma~\ref{lem:toric-orders}, the $F_7$-orders of the ordered orbifold parameters $(x_1,v)$ or $(G_4,v)$ are $(\frac{7}{3}, \frac{2}{3})$. In particular, $\ord_{F_7}(v)=\frac{2}{3}$.

On the other hand, $u-v=-(1+\gamma)x_1^2$. In either of the orbifold parameters, $x_1$ belongs to the maximal ideal $(x_1,v)$ or $(G_4,v)$ at the origin of the orbifold cover. Hence every nonzero term of $x_1^2$ has degree at least two, and therefore has $F_7$-order at least $\frac{4}{3}$. Thus $\ord_{F_7}(u-v)>\ord_{F_7}(v)$, and hence
\begin{align*}
    \ord_{F_7}(u)
    =
    \min\bigl\{\ord_{F_7}(v),\ord_{F_7}(u-v)\bigr\}
    =
    \frac{2}{3}.
\end{align*}
If $u=v$, this is immediate.
\end{proof}

\subsection{Classification of the crepant extractions}

By Lemma \ref{lem:rank-one-reduction}, it is sufficient to study $h$ when $(S, \Delta)$ is one of the rank-one exceptional pairs of type $(A_2^6)$ or $(I_2^2)$. In this section, we use the chosen toroidal completion $(S,D)$ as in Lemma \ref{lem:toroidal-completion} to determine the possible configurations of the exceptional curves of $h$ in the corresponding local models.

\begin{lemma}\label{lem:h-toroidal}
Let $(S, \Delta)$ be one of the rank one exceptional pairs of type $(A_2^6)$ or $(I_2^2)$, where $\Delta:=\frac{6}{7}(C_1+C_2)$. Let $D$ be as defined in Lemma \ref{lem:toroidal-completion}. Let $h\colon Y\to S$ be a projective birational morphism between normal surfaces such that $K_Y+h_*^{-1}\Delta=h^*(K_S+\Delta)$. Then $(Y, D_Y)$ is a toroidal pair, where $D_Y:=h_*^{-1}D+\Exc(h)$. Moreover, $h\colon (Y, D_Y)\to (S, D)$ is a toroidal morphism.
\end{lemma}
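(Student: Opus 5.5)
This is a direct application of Lemma~\ref{lem:crepant-toroidal} to the toroidal pair $(S,D)$ of Lemma~\ref{lem:toroidal-completion} with the $\bQ$-boundary $\Delta=\frac{6}{7}(C_1+C_2)$. The plan is to check that the two hypotheses of that lemma hold.

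\textbf{Toroidality and support.} By Lemma~\ref{lem:toroidal-completion}, $(S,D)$ is a normal toroidal surface pair. Moreover, $\Delta$ is a $\bQ$-boundary with $\Supp(\Delta)=C_1+C_2\subseteq D$.

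\textbf{Hypothesis (1).}
\begin{itemize}
\item Since $(S,\Delta)$ is an exceptional log Calabi--Yau pair of type $(A_2^6)$ or $(I_2^2)$ with $K_S+\Delta\sim_{\bQ}0$, every $\bQ$-complement of $K_S+\Delta$ is klt. In particular, $K_S+\Delta$ is its own complement, so $(S,\Delta)$ is klt.
\item For smoothness of $S\setminus\Supp(\Delta)$, note that $S=\bP(1,2,3)$ is singular only at $Q_2=[0:1:0]$ and $Q_3=[0:0:1]$.
\item In case $(A_2^6)$, both points lie on $C_{(1)}=(x_1=0)$ by Theorem~\ref{thm:rank-one-exceptional}.
\item In case $(I_2^2)$, $Q_2\in C_{(3)}=(x_3=0)$. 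Also $Q_3\in C_{(4)}$, because $x_2^2-x_1^4-x_1x_3$ vanishes at $[0:0:1]$; the same holds for the general form in Theorem~\ref{thm:rank-one-exceptional}.
\item Hence in both cases $\mathrm{Sing}(S)\subseteq\Supp(\Delta)$, and $S\setminus\Supp(\Delta)$ is smooth.
\end{itemize}

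\textbf{Hypothesis (2).} This is exactly the assumption $K_Y+h_*^{-1}\Delta=h^*(K_S+\Delta)$, with $\Delta_Y:=h_*^{-1}\Delta$.

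\textbf{Conclusion.} Lemma~\ref{lem:crepant-toroidal} now gives that $h$ is an isomorphism over $S\setminus\Supp(\Delta)$. It also gives that $(Y,D_Y)$ is toroidal with $D_Y=h_*^{-1}D+\Exc(h)$, and that $h\colon(Y,D_Y)\to(S,D)$ is a toroidal morphism.

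\textbf{Main obstacle.} The only point requiring care is that $D$ contains the auxiliary curves $C_{Q_2},C_{Q_3}$ while $\Delta$ does not. Lemma~\ref{lem:crepant-toroidal} allows this, since it only needs $\Supp(\Delta)\subseteq D$. Its proof shows that every exceptional center lies in $\Supp(\Delta)$ and is toroidal with respect to $D$; the auxiliary curves serve only to make $(S,D)$ toroidal at $Q_2$ and $Q_3$. I would also check that the local models at the nodes and crossings of $C_1+C_2$ (Lemmas~\ref{lem:A26-local-coordinates} and~\ref{lem:I22-local-coordinates}) are those of the toroidal structure, which is already part of Lemma~\ref{lem:toroidal-completion}.
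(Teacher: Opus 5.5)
Your proposal is correct and follows the paper's proof. Both arguments reduce the statement to Lemma~\ref{lem:crepant-toroidal} by checking three things: $\Supp(\Delta)\subseteq D$, $(S,\Delta)$ is klt, and $\mathrm{Sing}(S)=\{Q_2,Q_3\}\subseteq\Supp(\Delta)$; hypothesis (2) is then the crepancy assumption itself. You give somewhat more detail than the paper: you derive klt-ness from exceptionality together with $K_S+\Delta\sim_{\bQ}0$, and you check $Q_3\in C_{(4)}$ explicitly in the $(I_2^2)$ case.
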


\begin{proof}
By Theorem \ref{thm:rank-one-exceptional}, all singularities of $S$ are contained in $\Supp(\Delta)$, thus $S\setminus \Supp(\Delta)$. Theorem \ref{thm:rank-one-exceptional} also gives that $(S, \Delta)$ is klt. By definition of $D$, $\Supp(\Delta)\subseteq \Supp(D)$. Hence all assumptions of Lemma \ref{lem:crepant-toroidal} are satisfied and the lemma follows.
\end{proof}

\begin{lemma}\label{lem:exhaustion}
Let $(S,\Delta)$ be one of the rank-one exceptional pairs of type $(A_2^6)$ or $(I_2^2)$, where $\Delta:=\frac{6}{7}(C_1+C_2)$. Denote $C_{(k)}$ for the component $C_1+C_2$ of degree $k$. In the $(A_2^6)$ case, we further assume that $C_{(6)}$ is an irreducible nodal rational curve.

Let $h\colon Y\to S$ be a projective birational morphism from a normal surface, $B_Y,E$ are irreducible curves on $Y$ such that $B_Y+E=h_*^{-1}(C_1+C_2)$. We write
\begin{align*}
    D_Y:=h_*^{-1}D+\Exc(h),\qquad L:=K_Y+B_Y+E
\end{align*}
Assume that
\begin{enumerate}[label={\rm(\arabic*)}]
    \item $h\colon (Y, D_Y)\to (S, D)$ is a toroidal morphism and $K_Y+\Delta_Y=h^*\left(K_S+\Delta\right)$, where $\Delta_Y:=\frac{6}{7}(B_Y+E)$;
    \item $L$ is big and nef, and $L^2=\frac{1}{42}$;
    \item there is a unique irreducible curve on $Y$ having zero intersection with $L$; this curve is one of $B_Y$ and $E$, while the other has intersection $\frac{1}{6}$ with $L$;
    \item $B_Y$ and $E$ intersect transversely at a smooth point of $Y$, and among the smooth points of $S$ at which $C_1+C_2$ has two analytic branches, there is exactly one point $P$ over which $h$ is not an isomorphism.
\end{enumerate}
Then $h$ is an isomorphism over $S\setminus\{P\}$. The corresponding fan subdivision in the local model at $P$ is given by
\begin{center}
\renewcommand{\arraystretch}{1.18}
\begin{tabular}{@{}cl@{}}
\toprule
rank-one pair
&
extraction over the point with two analytic branches
\\
\midrule
$(A_2^6)$
&
$(5,2),(2,5)$ at the node of $C_{(6)}$
\\
$(I_2^2)$
&
$(5,2),(2,5)$ at one of the two crossings of
$C_{(3)}+C_{(4)}$
\\
\bottomrule
\end{tabular}
\end{center}

\end{lemma}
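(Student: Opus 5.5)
Since $h$ is toroidal and crepant for $\Delta=\frac67(C_1+C_2)$, every $h$-exceptional curve $F$ corresponds to a primitive vector $u$ in a local model at a zero-dimensional stratum of $(S,D)$, and $a(F,S,\Delta)=1$. The plan is to first use Lemma~\ref{lem:local-toric-discrepancy} to list the admissible rays at each type of stratum, then use the volume $L^2=\frac1{42}$ and the intersection conditions to rule out all configurations except the one in the table.

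\textbf{Step 1: admissible rays.} At a zero-dimensional stratum of $(S,D)$ the two branches carry coefficients in $\{0,\frac67\}$.
\begin{itemize}
\item If one branch is an auxiliary curve $C_{Q_i}$ or a general crossing with coefficient $0$, then $a(F,S,\Delta)=1$ together with Lemma~\ref{lem:local-toric-discrepancy} leaves only finitely many candidates. At a smooth point with one boundary branch, $\frac{p}{7}+q=1$ with $p,q\ge1$ has no solution, so $h$ is an isomorphism there. At $Q_2$ and $Q_3$ one gets short explicit lists using the generators $(2,-1),(0,1)$ and $(3,-2),(0,1)$ of Remark~\ref{rem:local-model-singularities}; for instance $(7,-4)$ at $Q_3$ is the reason for Corollary~\ref{cor:F7-order-u}.
\item At a point where $C_1+C_2$ has two branches (the node of $C_{(6)}$, or a crossing of $C_{(3)}$ and $C_{(4)}$), the condition is $\frac{p+q}{7}=1$. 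This gives $u\in\{(1,6),(2,5),(3,4),(4,3),(5,2),(6,1)\}$.
\end{itemize}
Any subdivision using these rays is allowed, as long as the adjacent rays satisfy the fan condition.

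\textbf{Step 2: write $L$ on $Y$.} By crepancy,
\[
L=h^*(K_S+C_1+C_2)-\sum_F a(F,S,C_1+C_2)\,F .
\]
At the two-branch point $a(F,S,C_1+C_2)=0$, so these curves contribute nothing. At the other strata $a=\frac17\ord_F(C_1+C_2)$, computed from Lemma~\ref{lem:local-toric-discrepancy}. Then $L^2=\frac16-\sum(\text{local negative-definite contributions})$, and each contribution is computed with Lemma~\ref{lem:toric-intersections}. This gives the main numerical constraint: the contributions must sum to exactly $\frac16-\frac1{42}=\frac17$.

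\textbf{Step 3: intersection conditions.} Condition (3) says that one of $B_Y,E$ has $L$-degree $0$ and the other has $L$-degree $\frac16$. Every other curve, in particular every exceptional curve, must have positive $L$-degree, by the uniqueness in (3). Positivity on exceptional curves over $Q_2$, $Q_3$ and general crossings should force $h$ to be an isomorphism there. Such curves either have $L$-degree $\le0$, or they make $L^2$ and the degrees of $B_Y,E$ incompatible with $0$ and $\frac16$. So all of $h$ lies over $P$, and there the only $L$-degree change on the strict transforms comes from the extremal rays $u_1,u_n$. Since
\[
L\cdot \tilde C_1=(K_S+C_1+C_2)\cdot C_1-\frac{1}{\det(u_0,u_1)}
\]
(up to which branch it is), and similarly for $C_2$, the conditions $L\cdot\tilde C_i\in\{0,\frac16\}$ pin down the extremal rays.
\begin{itemize}
\item In case $(I_2^2)$: $H\cdot C_{(3)}=\frac12$ and $H\cdot C_{(4)}=\frac23$, so one needs $\det=\frac12$.
\item In case $(A_2^6)$: both branches lie on $C_{(6)}$, and $H\cdot C_{(6)}=1$ with two branch corrections, so the total correction must be $1$ or $\frac56$. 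This gives denominators $2$ and $2$ again; $C_{(1)}$ stays at $\frac16$.
\end{itemize}
Together with $(F_1+\dots)^2=-\frac17$, a short check of the six candidate rays should leave exactly $\{(5,2),(2,5)\}$. For example, a single ray $(3,4)$ gives $-\frac1{12}$ and fails, and adding $(1,6)$ breaks the extremal determinants.

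\textbf{Main obstacle.} The hardest step is the exclusion at $Q_2$, $Q_3$ and at auxiliary crossings in Step 3. There, extra exceptional curves could in principle trade volume against the $P$-contribution. I would first try to show directly that each admissible ray there yields an exceptional curve $F$ with $L\cdot F\le0$. If that fails, I would bound its negative contribution to $L^2$ from below, so that the sum of contributions cannot equal $\frac17$ compatibly with condition (3).
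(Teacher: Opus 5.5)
\textbf{There is a genuine gap.} It is exactly the step you flag as the main obstacle: the extraction $F_7$ of the ray $(7,-4)$ over $Q_3$ cannot be ruled out by any of the numerical data you propose to use.

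Take $F_7$ together with the rays $(5,2),(3,4)$ (or $(4,3),(2,5)$) at $P$. Then $h^*(K_S+C_1+C_2)-L=\frac13F_7+F_1+F_2$. The local volume drops are $\frac1{42}$ over $Q_3$ and $\frac1{14}+\frac1{21}=\frac5{42}$ over $P$, so $L^2=\frac16-\frac17=\frac1{42}$. Moreover, $L\cdot F_7=-\frac13F_7^2=\frac1{14}>0$ and $L\cdot F_1,\,L\cdot F_2>0$. The boundary degrees are exactly $0$ and $\frac16$:
\begin{itemize}
\item in case $(I_2^2)$, $L\cdot\tilde C_{(3)}=\frac12-\frac12=0$ and $L\cdot\tilde C_{(4)}=\frac23-\frac16-\frac13=\frac16$;
\item in case $(A_2^6)$, $L\cdot\tilde C_{(1)}=\frac16-\frac16=0$ and $L\cdot\tilde C_{(6)}=1-\frac12-\frac13=\frac16$.
\end{itemize}
So both of your fallback strategies fail here: there is no exceptional $F$ with $L\cdot F\le 0$, and the volume sum is compatible with Condition (3).

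The missing idea is to apply the uniqueness in Condition (3) to a curve that is neither exceptional nor a boundary component. The paper uses $\Gamma=(x_2-x_1^2=0)\in|\mathcal O_S(2)|$, which passes through both $Q_3$ and $P$. Corollary~\ref{cor:F7-order-u} gives $\ord_{F_7}(u)=\frac23$. In the coordinates of Lemmas~\ref{lem:A26-local-coordinates} and~\ref{lem:I22-local-coordinates}, $\ord_{F_i}(r\pm w)=\min\{p_i,q_i\}$. From these one gets $\Gamma_Y\cdot F_7=\frac17$ and $\Gamma_Y\cdot(F_1+F_2)=\frac27$, hence $L\cdot\Gamma_Y=\frac13-\frac1{21}-\frac27=0$, a contradiction.

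By contrast, $F_{(7,-3)}$ and $F_{14}$ are easy. They subtract $\frac12$ and $\frac23$, respectively, from the $L$-degree of the strict transform of the boundary component through $Q_2$, resp.\ $Q_3$. This makes that degree negative (together with the contribution from $P$ in case $(I_2^2)$). That part fits your Step 3 once these contributions are included.

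\textbf{Two further corrections.}

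First, in Step 2 the coefficients are wrong. The correct formula is $L=h^*(K_S+C_1+C_2)-\sum_F\bigl(1-a(F,S,C_1+C_2)\bigr)F$, and $1-a(F,S,C_1+C_2)=\frac17\ord_F(C_1+C_2)$ is the quantity you called $a$. In particular, the curves over $P$ appear with coefficient $1$ and carry the volume drop; they do not ``contribute nothing''. Your formula for $L\cdot\tilde C_i$ in Step 3 silently uses the correct coefficient.

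Second, at $P$ the volume cannot exclude interior rays. The local drop is $\frac{1}{7\det(u_0,u_1)}+\frac{1}{7\det(u_n,u_{n+1})}$, which does not depend on interior rays. For example, $(5,2),(4,3),(2,5)$ also gives a drop of $\frac17$. What excludes interior rays is that $K_Y+\Delta_Y\sim_{\bQ}0$ gives $L\equiv\frac17(B_Y+E)$. Hence an exceptional curve not meeting $B_Y+E$ has $L$-degree $0$, contradicting (3). The paper uses this at the outset to get $n\le 2$ at $P$ and to allow at most one of $F_7,F_{14}$ over $Q_3$.
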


\begin{proof}

Let $G$ be a prime $h$-exceptional curve. Condition (1) gives $a(G, S, \Delta)=1$. On the other hand,
\begin{align*}
    K_Y+\frac{6}{7}(B_Y+E)=h^*(K_S+\frac{6}{7}(C_1+C_2))\sim_\bQ 0.
\end{align*}
Thus $L\equiv \frac{1}{7}(B_Y+E)$ and Condition~{\rm(3)} gives
\begin{align}\label{eq:exceptional-meets-boundary}
    (B_Y+E)\cdot G=7L\cdot G>0.
\end{align}
Thus every $h$-exceptional curve meets $B_Y+E$.

\medskip
\noindent\emph{Step 1. Determine all possible configurations of $h$-exceptional curves}. 

By Condition (1) and Definition \ref{def:toroidal-local-model}, $h(\Exc(h))\subseteq D$.

Suppose first that an $h$-exceptional curve is centered at a smooth point of $S$ lying on two analytic branches of $C_1+C_2$. These branches meet transversely. At such a point, choose a local model $(X_\sigma,t)$ such that $\sigma$ has primitive generators $u_0=(1,0)$ and $u_{n+1}=(0,1)$, corresponding to the two branches of $C_1+C_2$. 

Let $u_i=(p_i,q_i)$, $1\leq i\leq n$, be the primitive generators of the new rays in the subdivision of $\sigma$ induced by $h$, with corresponding exceptional curves $E_i$. Lemma~\ref{lem:local-toric-discrepancy} gives
\begin{align*}
    1=a\left(E_{u_i},S, \Delta\right)=\frac{p_i+q_i}{7}.
\end{align*}
Hence
\begin{align}\label{eq:possible-crossing-rays}
    (p_i,q_i) \in \bigl\{ (6,1),(5,2),(4,3),(3,4),(2,5),(1,6) \bigr\}.
\end{align}
By \eqref{eq:exceptional-meets-boundary}, every $h$-exceptional curve meets $B_Y+E$. Since only those corresponding to new rays adjacent to $\bR_{\geq 0}u_0$ or $\bR_{\geq 0}u_{n+1}$ can do so, $n\leq2$. Let $P$ be the unique smooth two-branch point over which $h$ is not an isomorphism, as in Condition~{\rm(4)}.

Suppose next that an $h$-exceptional curve is centered at a smooth point lying on exactly one branch of $C_1+C_2$. Since $h$ is toroidal, such a center must be a zero-dimensional stratum of $D$. At such a point, choose a local model $(X_\sigma, t)$ such that $\sigma$ has primitive generators $(1,0)$ and $(0,1)$, corresponding respectively to the branch of $C_1+C_2$ and the other component of $D$. The latter has coefficient zero in $\Delta$.

Let $u=(p,q)$ be the primitive generator of a new ray in the subdivision of $\sigma$ induced by $h$, with corresponding exceptional curves $E$. Lemma~\ref{lem:local-toric-discrepancy} gives
\begin{align*}
    a(E_u,S,\Delta)=\frac{p}{7}+q>1,
\end{align*}
contrary to Condition~{\rm(1)}. Hence no $h$-exceptional curve is centered at such a point.

It remains to consider exceptional curves centered at the two cyclic quotient singularities $Q_2$ and $Q_3$. At $Q_2$, we use the fixed local model of Remark \ref{rem:local-model-singularities}. The primitive generators $(2,-1), (0,1)$ correspond to the branches of $C_1+C_2$, $C_{Q_2}$, respectively. The latter branch has coefficient zero in $\Delta$.

Let $u=(p,q)$ be the primitive generator of a new ray in the subdivision of $\sigma$ induced by $h$, with corresponding exceptional curves $E$. Condition (1) and Lemma~\ref{lem:local-toric-discrepancy} give
\begin{align*}
    1=a\left(E,S, \Delta \right)=\frac{4p+7q}{7}.
\end{align*}
The only primitive solution is $(7,-3)$. We denote the corresponding curve over $S$ by $F_{(7,-3)}$.

At $Q_3$, we use the fixed local model of Remark \ref{rem:local-model-singularities}. The primitive generators $(3,-2), (0,1)$ correspond to the branches of $C_1+C_2$, $C_{Q_3}$, respectively. The latter branch has coefficient zero in $\Delta$.

Let $u=(p,q)$ be the primitive generator of a new ray in the subdivision of $\sigma$ induced by $h$, with corresponding exceptional curves $E$. Condition (1) and Lemma~\ref{lem:local-toric-discrepancy} give
\begin{align*}
    1=a\left(E,S, \Delta\right)=\frac{5p+7q}{7}.
\end{align*}
The only primitive solutions are $(7,-4)$ and $(14,-9)$. We denote their corresponding curves over $S$ by $F_7$ and $F_{14}$, respectively. 

At most one of $F_7$ and $F_{14}$ can be extracted by $h$. Otherwise, the ray of $F_7$ would not be adjacent to $\bR_{\geq0}(3,-2)$, so $F_7$ would not meet $B_Y+E$, contradicting \eqref{eq:exceptional-meets-boundary}.

For later use, set
\begin{align*}
    R_h:=h^*(K_S+C_1+C_2)-L.
\end{align*}
Then $R_h$ is $h$-exceptional and for every prime $h$-exceptional curve $F$,
\begin{align*}
    \mathrm{mult}_F(R_h)=1-a(F,S,C_1+C_2).
\end{align*}
For the candidate exceptional curves identified above, their log discrepancies with respect to the pair $(S,C_1+C_2)$ are
\begin{align*}
    a(F_{(7,-3)},S,C_1+C_2)=\frac{1}{2},\quad
    a(F_7,S,C_1+C_2)=\frac{2}{3},\quad
    a(F_{14},S,C_1+C_2)=\frac{1}{3},
\end{align*}
whereas $a(F,S,C_1+C_2)=0$ for every divisor $F$ extracted over a smooth two-branch point. Consequently, $R_h$ is effective, and its coefficients along $F_{(7,-3)}$, $F_7$, $F_{14}$, and a divisor extracted over a smooth two-branch point are $\frac{1}{2},\frac{1}{3},\frac{2}{3},1$, respectively.

\medskip
\noindent\emph{Step 2. Rule out the extraction over $Q_2$.}

\begin{claim}\label{claim:no-index-two-extraction}
    The curve $F_{(7,-3)}$ is not extracted by $h$.
\end{claim}

\begin{proof}
Suppose first that $(S,\frac{6}{7}(C_1+C_2))$ is of type $(A_2^6)$. The center of $F_{(7,-3)}$ lies on $C_{(1)}$, and
\begin{align*}
    (K_S+C_{(1)}+C_{(6)})\cdot C_{(1)}
    =
    \frac{1}{6}.
\end{align*}
Suppose that $F_{(7,-3)}$ is extracted. Set $C_{(1),Y}:=h_*^{-1}C_{(1)}$. Recall that in the local model, $C_{(1), Y}, F_{(7,-3)}$ correspond to primitive generators $(2,-1), (7,-3)$, respectively. Lemma \ref{lem:toric-intersections} gives 
\begin{align*}
    C_{(1), Y}\cdot F_{(7,-3)}=1
\end{align*}
and we would have
\begin{align*}
    L\cdot C_{(1),Y}&=\big(h^*(K_S+C_1+C_2)-R_h  \big)\cdot C_{(1), Y}\\
    &\leq \big(h^*(K_S+C_1+C_2)-\frac{1}{2}F_{(7,-3)}  \big)\cdot C_{(1), Y}\\
    &=\frac{1}{6}-\frac{1}{2}<0,
\end{align*}
contradicting nefness of $L$.

Suppose now that the pair is of type $(I_2^2)$. In this case, the center of $F_{(7,-3)}$ lies on $C_{(3)}$, and
\begin{align*}
    (K_S+C_{(3)}+C_{(4)})\cdot C_{(3)}=\frac{1}{2}.
\end{align*}
Recall that in the local model $(X_\sigma,t)$ at $Q_2$, $C_{(3),Y}$ and $F_{(7,-3)}$ correspond to primitive generators $(2,-1)$ and $(7,-3)$, respectively. By Condition~{\rm(4)}, $h$ is not an isomorphism over exactly one of the two points of $C_{(3)}\cap C_{(4)}$. In the local model $(X_{\sigma'},t')$ at that crossing, let $u_1=(p_1,q_1)$ generate the new ray adjacent to $\bR_{\geq0}(1,0)$,  which is the ray corresponding to $C_{(3)}$. Lemma \ref{lem:toric-intersections} gives 
\begin{align*}
    C_{(3),Y}\cdot F_{(7,-3)}&=1,\\
    C_{(3),Y}\cdot E_{1}&=\frac{1}{q_1}.
\end{align*}
Consequently
\begin{align*}
    L\cdot C_{(3),Y}&=\big(h^*(K_S+C_1+C_2)-R_h \big)\cdot C_{(3), Y}\\
    &\leq \big(h^*(K_S+C_1+C_2)-\frac{1}{2}F_{(7,-3)}-E_{1}\big)\cdot C_{(3), Y}\\
    &= \frac{1}{2} -\frac{1}{2} -\frac{1}{q_1}<0,
\end{align*}
again contradicting nefness of $L$.
\end{proof}

\medskip
\noindent\emph{Step 3. Use the volume drop to rule out the extraction of $F_{14}$.}

Define the \emph{volume drop of $h$} by
\begin{align*}
    d(h)&:=(K_S+C_1+C_2)^2-L^2.
\end{align*}
For both rank-one pairs, $(K_S+C_1+C_2)^2=\frac{1}{6}$. Hence, by Condition~{\rm(2)},
\begin{align}\label{eq:exhaustion-total-decrease}
    d(h)=\frac{1}{6}-\frac{1}{42}=\frac{1}{7}.
\end{align}

For each closed point $s\in S$, let $R_{h,s}$ be the sum of the components of $R_h$ centered at $s$, with their coefficients in $R_h$. Thus 
\begin{align*}
    R_h=\sum_s R_{h,s}, \qquad d_s(h):=-R_{h,s}^2.
\end{align*}
Since $R_h$ is $h$-exceptional, the projection formula gives $d(h)=-R_h^2$. Moreover, the divisors $R_{h,s}$ corresponding to distinct $s$ have disjoint supports. Therefore
\begin{align}\label{eq:additivity-of-volume-drop}
    \sum_s d_s(h)=\frac{1}{7}.
\end{align}

Let $P$ be the unique smooth two-branch point over which $h$ is not an isomorphism. By Lemma~\ref{lem:toric-intersections} and
\eqref{eq:additivity-of-volume-drop}, we obtain
\begin{center}
\renewcommand{\arraystretch}{1.15}
\begin{tabular}{@{}ccc@{}}
\toprule
extraction over $Q_3$
&
$d_{Q_3}(h)$
&
$d_P(h)$
\\
\midrule
none     & $0$             & $\frac{1}{7}$   \\
$F_7$    & $\frac{1}{42}$  & $\frac{5}{42}$ \\
$F_{14}$ & $\frac{2}{21}$  & $\frac{1}{21}$ \\
\bottomrule
\end{tabular}
\end{center}

If $h$ extracts a single divisor over $P$, with corresponding primitive generator $(p,q)$, then
\begin{align*}
    d_P(h)=\frac{1}{pq}.
\end{align*}
None of the vectors in \eqref{eq:possible-crossing-rays} gives any of the values in the last column of the table above. By Condition~{\rm(4)} and the bound $n\leq2$ proved above, $h$ therefore extracts exactly two curves $F_1,F_2$ over $P$. Let $(p_1,q_1)$ and $(p_2,q_2)$ be the primitive generators of the corresponding rays of $F_1$ and $F_2$, ordered by increasing slope. By Lemma~\ref{lem:toric-intersections},
\begin{align*}
    d_P(h)&=-(F_1+F_2)^2=\big(\frac{p_1}{p_2}+\frac{q_2}{q_1}-2 \big)\cdot\frac{1}{p_1q_2-p_2q_1}.
\end{align*}
Checking all pairs of vectors in \eqref{eq:possible-crossing-rays} gives the following table:
\begin{center}
\renewcommand{\arraystretch}{1.18}
\begin{tabular}{@{}ccc@{}}
\toprule
extraction over $Q_3$
&
required value of $d_P(h)$
&
ordered generators
\\
\midrule
none
&
$\frac{1}{7}$
&
$(5,2),(2,5)$
\\
$F_7$
&
$\frac{5}{42}$
&
$(5,2),(3,4)$ or $(4,3),(2,5)$
\\
$F_{14}$
&
$\frac{1}{21}$
&
no solutions
\\
\bottomrule
\end{tabular}
\end{center}

Thus $F_{14}$ cannot be extracted. It remains to exclude $F_7$.

\medskip
\noindent\emph{Step 4. Use Condition (3) to rule out the extraction of $F_7$.}

\begin{claim}\label{claim:no-F7-extraction}
    The curve $F_7$ is not extracted by $h$.
\end{claim}

\begin{proof}
Suppose that $F_7$ is extracted. By the preceding table, the
primitive generators of the two new rays in the local model at $P$, ordered by increasing slope, are
\begin{align*}
    (5,2),(3,4)
    \qquad\text{or}\qquad
    (4,3),(2,5).
\end{align*}
Let $F_1$ and $F_2$ be the corresponding exceptional curves. Then
\begin{align*}
    R_h=\frac{1}{3}F_7+F_1+F_2.
\end{align*}

Using the weighted coordinates as in Lemma \ref{lem:rank-one-normal-forms}, \ref{lem:A26-local-coordinates} and \ref{lem:I22-local-coordinates}. And replacing $x_2$ by $-x_2$ in the $(I_2^2)$ case if necessary, we may assume that $P=[1:1:0]$.

Let
\begin{align*}
    \Gamma:=(u=0)=(x_2-x_1^2=0)\in|\mathcal O_S(2)|,
    \qquad
    \Gamma_Y:=h_*^{-1}\Gamma.
\end{align*}
The curve $\Gamma$ is an irreducible curve and passes through $Q_3$ and $P$.

We shall prove that 
\begin{align}\label{eq:Gamma-is-L-trivial}
    L\cdot\Gamma_Y=0.
\end{align}
Indeed, by the definition of $R_h$ and the projection formula,
\begin{align*}
    L\cdot \Gamma_Y&=h^*(K_S+C_1+C_2)\cdot \Gamma_Y-R_h\cdot \Gamma_Y\\
    &=(K_S+C_1+C_2)\cdot \Gamma-(\frac{1}{3}F_7+F_1+F_2)\cdot \Gamma_Y\\
    &=\frac{1}{3}-(\frac{1}{3}F_7+F_1+F_2)\cdot \Gamma_Y,
\end{align*}
It therefore suffices to prove that
\begin{align}\label{eq:Gamma-Rh-intersection}
    (\frac{1}{3}F_7+F_1+F_2)\cdot \Gamma_Y=\frac{1}{3}.
\end{align}

We first compute the contribution over $Q_3$. Since $\Gamma=(u=0)$, Corollary \ref{cor:F7-order-u} gives
\begin{align*}
    \ord_{F_7}(\Gamma)=\ord_{F_7}(u)=\frac{2}{3}.
\end{align*}
Since $F_7$ is the only exceptional curve lying over $Q_3$, the part of $h^*\Gamma-\Gamma_Y$ supported over $Q_3$ is $\frac{2}{3}F_7$. By Lemma~\ref{lem:toric-intersections}, $F_7^2=-\frac{3}{14}$. It follows that
\begin{align}\label{eq:Gamma-F7-intersection}
    \Gamma_Y\cdot F_7=\big(h^*\Gamma-\frac{2}{3}F_7 \big)\cdot F_7=-\frac{2}{3}F_7^2=\frac{1}{7}.
\end{align}

We next compute the contribution over $P$. By Lemma~\ref{lem:A26-local-coordinates} in the case $(A_2^6)$ and Lemma~\ref{lem:I22-local-coordinates} in the case $(I_2^2)$, there are analytic coordinates $(r,w)$ at $P$ such that the two analytic branches of $C_1+C_2$ are given by $r=0$ and $w=0$. Up to multiplication by a unit, the local equation $u$ of $\Gamma$ is
\begin{align*}
    w-r=0
    \quad\text{in the case $(A_2^6)$},
    \qquad
    r+w=0
    \quad\text{in the case $(I_2^2)$},
\end{align*}
For an exceptional curve $E$ corresponding to the ray generated by $(p,q)$ with respect to the analytic coordinates $(r,w)$,
\begin{align}\label{eq:Gamma-order-over-P}
    \ord_{E}(\Gamma)
    =
    \ord_{E}(r\pm w)
    =
    \min\{p,q\}.
\end{align}
Interchanging $r$ and $w$ exchanges the two analytic branches and preserves $r\pm w$ up to sign. It also exchanges the two configurations above and the labels $F_1,F_2$. We may therefore assume that the ordered generators are $(5,2),(3,4)$. By \eqref{eq:Gamma-order-over-P},
\begin{align*}
    \ord_{F_1}(\Gamma)=2,
    \qquad
    \ord_{F_2}(\Gamma)=3.
\end{align*}
Therefore, the part of $h^*\Gamma-\Gamma_Y$ supported over $P$ is $2F_1+3F_2$. By Lemma~\ref{lem:toric-intersections},
\begin{align*}
    F_1^2=-\frac{1}{7},
    \qquad
    F_1\cdot F_2=\frac{1}{14},
    \qquad
    F_2^2=-\frac{5}{42}.
\end{align*}
Since $h^*\Gamma\cdot(F_1+F_2)=0$, we obtain
\begin{align*}
    \Gamma_Y\cdot(F_1+F_2)
    &=
    -(2F_1+3F_2)\cdot(F_1+F_2)\\
    &=
    -\left(
        2\left(-\frac17\right)
        +5\left(\frac1{14}\right)
        +3\left(-\frac5{42}\right)
    \right)\\
    &=
    \frac27.
\end{align*}
The same equality holds for the other configuration by interchanging $r$ and $w$.

Combining the contributions over $Q_3$ and $P$, we obtain
\begin{align*}
    R_h\cdot \Gamma_Y&=(\frac{1}{3}F_7+F_1+F_2)\cdot \Gamma_Y\\
    &=\frac{1}{3}\cdot \frac{1}{7}+\frac{2}{7}=\frac{1}{3}.
\end{align*}
Consequently,
\begin{align*}
    L\cdot\Gamma_Y
    =\frac{1}{3}-R_h\cdot\Gamma_Y
    =0.
\end{align*}
Since $\Gamma$ is distinct from $C_1$ and $C_2$, its strict transform $\Gamma_Y$ is distinct from both $B_Y$ and $E$. This contradicts Condition~{\rm(3)}. Therefore $F_7$ is not extracted by $h$.
\end{proof}

Together with the preceding exclusions, we have proved that $h$ is an isomorphism over $S\setminus\{P\}$ and that the two new rays over $P$ have primitive generators $(5,2)$ and $(2,5)$, ordered by increasing slope.

In the $(A_2^6)$ case, the strict transforms of $C_{(1)}$ and $C_{(6)}$ still meet, so $h$ is an isomorphism over their intersection. Hence $P$ is the node of $C_{(6)}$. In the $(I_2^2)$ case, $P$ is one of the two crossings of $C_{(3)}$ and $C_{(4)}$. This proves the lemma.
\end{proof}

We now apply Lemma \ref{lem:exhaustion} to equality pairs.

\begin{proposition}\label{prop:reduction-to-two-types}
Let $(X,B)$ be an equality pair, and choose $h\colon Y\to S$ as in Lemma~\ref{lem:rank-one-reduction}. Then $h$ satisfies the assumptions of Lemma~\ref{lem:exhaustion}. In particular, it has one of the two possible extraction configurations listed there.
\end{proposition}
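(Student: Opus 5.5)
We verify the four conditions of Lemma~\ref{lem:exhaustion} in turn for the morphism $h\colon Y\to S$ produced by Lemma~\ref{lem:rank-one-reduction}, with $f\colon Y\to X$ and $E$ as in Lemma~\ref{lem:boundary-center-extraction}. The labels need care: by Lemma~\ref{lem:rank-one-reduction}, $C_1=h_*B_Y$ and $C_2=h_*E$. Since $E$ and $B_Y$ are not contracted by $h$, $h_*^{-1}(C_1+C_2)=B_Y+E$.

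\emph{Condition (1).} Lemma~\ref{lem:rank-one-reduction} gives $K_Y+\frac{6}{7}(B_Y+E)=h^*(K_S+\frac67(C_1+C_2))$ with $m=1$. So Lemma~\ref{lem:h-toroidal} applies and $h$ is toroidal with respect to $(S,D)$.

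\emph{Condition (2).} We have $L=K_Y+B_Y+E=f^*(K_X+B)$. It is therefore nef and big, and $L^2=(K_X+B)^2=\frac1{42}$.

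\emph{Condition (3).} Since $K_X+B$ is ample, a curve $\Gamma\subset Y$ satisfies $L\cdot\Gamma=0$ if and only if $f$ contracts $\Gamma$. The only such curve is $\Exc(f)=E$. Moreover $L\cdot B_Y=(K_X+B)\cdot B=\frac16$ by Theorem~\ref{thm:equality-properties}(1).

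\emph{Condition (4), first half.} By Lemma~\ref{lem:boundary-center-extraction}(3), $B_Y$ and $E$ meet transversely at a single smooth point $y$.

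\emph{Condition (4), second half.} This is the main point. In the $(A_2^6)$ case we must also check the standing hypothesis of Lemma~\ref{lem:exhaustion} that $C_{(6)}$ is an irreducible nodal rational curve, and then produce exactly one smooth two-branch point $P$ of $C_1+C_2$ over which $h$ is not an isomorphism.

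\emph{Existence of $P$.} Suppose $h$ were an isomorphism over every such point. By Lemma~\ref{lem:crepant-toroidal} and the analysis in Step~1 of Lemma~\ref{lem:exhaustion}, the only exceptional curves could then lie over $Q_2$ and $Q_3$. These have $\mathrm{mult}(R_h)<1$ and carry small volume drops: at most $\frac2{21}$ from the $Q_3$ table, and none from $Q_2$ by the Claim argument. The total drop would then not be $\frac17$. Alternatively, compare intersection numbers: $(K_S+C_1+C_2)\cdot C_i$ is $\frac16$ or $\frac12$ and must decrease to the values $0$ and $\frac16$, which forces a curve with coefficient $1$ in $R_h$. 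Such curves lie only over two-branch points.

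\emph{Uniqueness of $P$.} Every $h$-exceptional curve meets $B_Y+E$, by \eqref{eq:exceptional-meets-boundary}. Over a two-branch point, extracted curves are connected to both branches, and each extraction over a crossing separates the branches there. Since $B_Y\cap E=\{y\}$ consists of a single transverse point, exactly one crossing of $C_1$ and $C_2$ survives unblown-up. In the $(I_2^2)$ case there are two crossings, so exactly one of them is blown up; this is $P$.

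\emph{The $(A_2^6)$ case.} Here $C_{(1)}$ is a line and $C_{(6)}\in|{-K_S}|$. The arithmetic genus of $C_{(6)}$ is $1$. We compare with $E$ and $B_Y$, which are smooth rational curves; $B_Y\cong B\cong\mathbb P^1$ by Theorem~\ref{thm:equality-properties}(2). Which of $C_1,C_2$ is $C_{(6)}$ is determined by the intersection numbers $L\cdot B_Y=\frac16$ and $L\cdot E=0$. The strict transform of $C_{(6)}$ is a smooth rational curve, so $C_{(6)}$ must be singular and resolved by $h$. Log canonicity of $(S,C_1+C_2)$ excludes a cusp, so $C_{(6)}$ is nodal. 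It cannot be reducible: then $C_1+C_2$ would have three components, whereas there are only two. The node is then the unique two-branch point with nonisomorphic $h$, while $C_{(1)}\cap C_{(6)}$ must contain the surviving point $y$.

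\emph{Expected obstacle.} The step I expect to be hardest is excluding the case that $h$ is an isomorphism over all two-branch points. I would handle it with the volume-drop count $\frac17$ from the $Q_2$ and $Q_3$ contributions computed in Lemma~\ref{lem:exhaustion}.
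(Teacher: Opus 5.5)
Your verification is correct and is essentially the paper's argument. Conditions (1)--(3) and the transversality half of (4) are checked the same way. Nodality of $C_{(6)}$ comes from $p_a(C_{(6)})=1$, the smooth rational strict transform, and exclusion of a cusp by log canonicity. Finally, $P$ is the node in the $(A_2^6)$ case and the crossing other than $h(y)$ in the $(I_2^2)$ case.

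Those last two arguments already prove that $P$ exists, so the volume-drop paragraph you flag as the main obstacle is superfluous. As written it is also circular: the Claim in Step~2 of Lemma~\ref{lem:exhaustion} excluding $F_{(7,-3)}$ uses Condition~(4) in the $(I_2^2)$ case. A valid count would instead include $d_{Q_2}=\frac{1}{14}$ and note that no sum of it with $0,\frac{1}{42},\frac{2}{21}$ equals $\frac17$. Separately, your quoted values of $(K_S+C_1+C_2)\cdot C_i$ should be $\frac16,1$ for $(A_2^6)$ and $\frac12,\frac23$ for $(I_2^2)$.
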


\begin{proof}
    Let $(X, B)$ be an equality pair. Let $f\colon Y\to X$ be the morphism constructed in Lemma \ref{lem:boundary-center-extraction}. By Lemma~\ref{lem:rank-one-reduction}, there is a birational morphism $h\colon Y\to S$ such that
\begin{align}\label{eq:six-sevenths-crepant}
    K_Y+\frac{6}{7}(B_Y+E)=
    h^*\left(K_S+\frac{6}{7}(C_1+C_2)\right),
\end{align}
where $\left(S,\frac{6}{7}(C_1+C_2)\right)$ is of type $(A_2^6)$ or $(I_2^2)$. And Lemma \ref{lem:h-toroidal} show that $h$ is toroidal with respect to the chosen completion $D$. This verifies Condition~{\rm(1)} of Lemma~\ref{lem:exhaustion}.

Since $K_Y+B_Y+E=f^*(K_X+B)$ and $K_X+B$ is ample, the divisor $L:=K_Y+B_Y+E$ is big and nef. Moreover $L^2=\frac{1}{42}$. This verifies Condition~{\rm(2)}.

Since $E$ is the only $f$-exceptional curve, we have $L\cdot E=0$. On the other hand, if $C\subset Y$ is an irreducible curve distinct from $E$, then $f_*C$ is a curve. By the ampleness of $K_X+B$, 
\begin{align*}
    L\cdot C=(K_X+B)\cdot f_*C>0.
\end{align*}
Thus $E$ is the unique irreducible curve on $Y$ having intersection zero with $L$. Furthermore,
\begin{align*}
    L\cdot B_Y=(K_X+B)\cdot B=\frac{1}{6}.
\end{align*}
This verifies Condition~{\rm(3)}.

We next verify the additional hypothesis concerning $C_{(6)}$ in the case $(A_2^6)$. The strict transform of $C_{(6)}$ on $Y$ is one of the rational curves $B_Y$ and $E$. Consequently, $C_{(6)}$ is integral and its normalization is $\bP^1$. On the other hand, $C_{(6)}$ is Cartier. Thus adjunction formula gives
\begin{align*}
    2p_a(C_{(6)})-2=(K_S+C_{(6)})\cdot C_{(6)}=0,
\end{align*}
and therefore $C_{(6)}$ has arithmetic genus one. Thus $C_{(6)}$ has a unique singular point. Since the curve $C_{(6)}$ in the case $(A_2^6)$ avoids the singular points of $S$, its unique singularity is either a node or an ordinary cusp at a smooth point of $S$. In the latter case, $\lct(S, 0; C_{(6)})=\frac{5}{6}$, contradicting the log canonicity of $(S, \frac{6}{7}C_{(6)})$. Hence $C_{(6)}$ is a nodal rational curve.

It remains to verify Condition~{\rm(4)}. In the $(A_2^6)$ case, $C_{(1)}$ and $C_{(6)}$ meet at one smooth point. Since their strict transforms $B_Y$ and $E$ still meet, $h$ is an isomorphism over this point. On the other hand, the strict transform of $C_{(6)}$ on $Y$ is smooth, whereas $C_{(6)}$ is nodal. Hence $h$ is not an isomorphism over the node of $C_{(6)}$. Thus the node is the unique smooth two-branch point over which $h$ is not an isomorphism.

In the $(I_2^2)$ case, $C_{(3)}$ and $C_{(4)}$ meet at two distinct smooth points. Since $B_Y$ and $E$ meet at exactly one point, $h$ is an isomorphism over one crossing and is not an isomorphism over the other. Therefore Condition~{\rm(4)} of Lemma~\ref{lem:exhaustion} holds.

We have now verified all the assumptions of Lemma~\ref{lem:exhaustion}. Hence $h$ has one of the two possible extraction configurations listed there.
\end{proof}

\subsection{A criterion for the ample model}

We establish a criterion for identifying the ample model of $K_Y+B_Y+E$ in the two cases of Lemma~\ref{lem:exhaustion}.

We retain the notation and conclusions of the preceding steps. Suppose that $h\colon (Y,B_Y+E)\to(S,C_1+C_2)$ is one of the two extractions listed in Lemma~\ref{lem:exhaustion}. We have
\begin{align}\label{eq:L-square}
    (K_Y+B_Y+E)^2=\frac{1}{42},\quad
    (K_Y+B_Y+E)\cdot B_Y=\frac{1}{6},\quad
    (K_Y+B_Y+E)\cdot E=0,
\end{align}
and $(K_Y+B_Y+E)\cdot F>0$ for every prime $h$-exceptional curve $F$. By assumption, the divisor $K_S+C_1+C_2\sim_\bQ \cO_S(1)$ is ample. For every prime $h$-exceptional curve $G$, set
\begin{align*}
    \lambda_F:=1-a(F,S,C_1+C_2),\qquad \Lambda:=\sum_F\lambda_F F.
\end{align*}
Then $K_Y+B_Y+E=h^*(K_S+C_1+C_2)-\Lambda$. 

Our aim is to identify the ample model of $K_Y+B_Y+E$ with the
hypersurface $X_{18}\subseteq\bP(2,6,7,9)$. In the next subsection, we will construct sections
\begin{align*}
    s_d\in H^0\bigl(S,d(K_S+C_1+C_2)\bigr)
    \simeq H^0(S,\cO_S(d)),
    \qquad d\in\{2,6,7,9\},
\end{align*}
and lift them to sections
$\tilde s_d\in H^0\bigl(Y,d(K_Y+B_Y+E)\bigr)$, which define a rational map from $Y$ to $\bP(2,6,7,9)$. We first establish the lemmas and the ample model criterion in Proposition~\ref{prop:ample-model-criterion}.

\begin{lemma}\label{lem:section-lifting}
Suppose that the zero divisors $(s_d=0)$, for $d\in\{2,6,7,9\}$, have no common irreducible component and that
\begin{align}\label{eq:finite-order-check}
    \min_{d\in\{2,6,7,9\}}
    \frac{\ord_F(s_d)}{d}
    =
    \lambda_F
\end{align}
for every prime $h$-exceptional curve $F$. Then each $s_d$ induces a section
\begin{align*}
    \tilde s_d\in H^0\bigl(Y,d(K_Y+B_Y+E)\bigr),
\end{align*}
and the zero divisors $(\tilde s_d=0)$ have no common irreducible component.
\end{lemma}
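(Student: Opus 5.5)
We work with the zero divisors directly. Write $L:=K_Y+B_Y+E=h^*(K_S+C_1+C_2)-\Lambda$.

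\textbf{Step 1: the lifted section.} A section $s_d\in H^0(S,\cO_S(d))\simeq H^0(S,d(K_S+C_1+C_2))$ has an effective zero divisor $D_d:=(s_d=0)\sim_{\bQ} d(K_S+C_1+C_2)$. Pulling back gives
\begin{align*}
    h^*D_d=h_*^{-1}D_d+\sum_F \ord_F(s_d)\,F,
\end{align*}
where $F$ runs over the prime $h$-exceptional curves. Hypothesis \eqref{eq:finite-order-check} gives $\ord_F(s_d)\geq d\lambda_F$ for every $F$ and every $d$. We then set
\begin{align*}
    \tilde D_d:=h^*D_d-d\Lambda
    =h_*^{-1}D_d+\sum_F\bigl(\ord_F(s_d)-d\lambda_F\bigr)F\geq0,
\end{align*}
so that $\tilde D_d\sim_{\bQ} dL$ is effective.

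\textbf{Step 2: integrality and the section itself.} To obtain an honest section of the Weil divisorial sheaf $\cO_Y(dL)$, write $s_d$ as a rational function $\phi$ with $\divisor_S(\phi)+d(K_S+C_1+C_2)=D_d$. Here $K_S+C_1+C_2$ is taken as a fixed Weil divisor with $d(K_S+C_1+C_2)$ linearly equivalent to a degree $d$ divisor.

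Since $h$ is birational, $\phi$ is also a rational function on $Y$, and $\divisor_Y(\phi)=h^*\divisor_S(\phi)$. Using $h^*(K_S+C_1+C_2)=K_Y+B_Y+E+\Lambda$ for compatible choices of canonical divisors (so $K_Y=h_*^{-1}K_S+(\text{exceptional})$), we obtain
\begin{align*}
    \divisor_Y(\phi)+d(K_Y+B_Y+E)=\tilde D_d.
\end{align*}
The left side is an integral Weil divisor, so $\tilde D_d$ is integral as well as effective. This exhibits $\phi$ as a section $\tilde s_d\in H^0(Y,d(K_Y+B_Y+E))$ with zero divisor $\tilde D_d$.

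The main subtlety is this bookkeeping of Weil representatives. It must be checked that the coefficients $\ord_F(s_d)-d\lambda_F$ come out integral. This is automatic from the displayed identity, since the left side is integral.

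\textbf{Step 3: no common component.} Suppose $\Theta$ is a prime component common to all $\tilde D_d$. There are two cases.
\begin{itemize}
    \item If $\Theta$ is not $h$-exceptional, then $h_*\Theta$ is a common component of all $D_d$. This contradicts the first hypothesis.
    \item If $\Theta=F$ is $h$-exceptional, then its coefficient $\ord_F(s_d)-d\lambda_F$ is positive for every $d$. Hence $\min_d \ord_F(s_d)/d>\lambda_F$, which contradicts \eqref{eq:finite-order-check}.
\end{itemize}
Therefore the zero divisors $(\tilde s_d=0)$ have no common irreducible component.
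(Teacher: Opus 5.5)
Your proposal is correct and matches the paper's proof: you pull back the zero divisor and subtract $d\Lambda$ to get the effective divisor $h_*^{-1}(s_d=0)+\sum_F(\ord_F(s_d)-d\lambda_F)F$. You then rule out a common exceptional component using the equality in the minimum, and a common non-exceptional component by pushing forward to $S$. Your Step 2 only spells out the Weil-divisor and integrality bookkeeping that the paper compresses into ``viewing $h^*s_d$ as a rational section of $\mathcal O_Y(d(K_Y+B_Y+E))$.''
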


\begin{proof}
For every $d\in\{2,6,7,9\}$ and every prime $h$-exceptional curve
$G$, Equation~\eqref{eq:finite-order-check} gives
\begin{align*}
    \ord_G(s_d)\geq d\lambda_G.
\end{align*}
Since $K_Y+B_Y+E=h^*(K_S+C_1+C_2)-\sum_G\lambda_GG$, viewing $h^*s_d$ as a rational section of $\mathcal O_Y(d(K_Y+B_Y+E))$, the preceding inequalities show that it is regular. We denote the resulting section by
\begin{align*}
    \tilde s_d\in H^0\bigl(Y, d(K_Y+B_Y+E)\bigr).
\end{align*}
Its zero divisor is
\begin{align*}
    (\tilde s_d=0)
    =
    h_*^{-1}(s_d=0)
    +
    \sum_G\bigl(\ord_G(s_d)-d\lambda_G\bigr)G.
\end{align*}
In particular, $\ord_G(\tilde s_d)=\ord_G(s_d)-d\lambda_G$. Therefore
\begin{align*}
    \min_{d\in\{2,6,7,9\}}
    \frac{\ord_G(\tilde s_d)}{d}
    =0.
\end{align*}
Thus no $h$-exceptional curve is a common component of the zero divisors $(\tilde s_d=0)$.

If an irreducible curve $C$ on $Y$ not contracted by $h$ were a common component of the zero divisors $(\tilde s_d=0)$, then $h_*C$ would be a common component of all the zero divisors $(s_d=0)$, contrary to the assumption.
\end{proof}

\begin{lemma}\label{lem:equal-square}
Let $\varphi\colon Y\dashrightarrow X$ be a birational map between
normal projective surfaces, let $L$ be a $\bQ$-Cartier divisor on
$Y$, and let $A$ be an ample $\bQ$-Cartier divisor on $X$.
Take a common resolution of $\varphi$:
\begin{center}
\begin{tikzpicture}[baseline=(current bounding box.center)]
    \node (W) at (0,1.2) {$W$};
    \node (Y) at (-1.6,0) {$Y$};
    \node (X) at (1.6,0) {$X$};
    \draw[->] (W)--node[left] {$p$} (Y);
    \draw[->] (W)--node[right] {$q$} (X);
    \draw[->,dashed] (Y)--node[below] {$\varphi$} (X);
\end{tikzpicture}
\end{center}
Suppose that there is an effective $p$-exceptional $\bQ$-divisor $F$
such that
\begin{align}\label{eq:equal-square-pullback}
    p^*L\sim_\bQ q^*A+F.
\end{align}
If $L^2=A^2$, then $F=0$, $\varphi$ is a birational morphism and $L\sim_\bQ\varphi^*A$. 
\end{lemma}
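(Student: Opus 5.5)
Since $q^*A$ is nef on $W$, I will compute $(p^*L)^2$ from the decomposition \eqref{eq:equal-square-pullback} and use the Hodge index theorem on the exceptional part. On $W$ we have
\begin{align*}
    L^2=(p^*L)^2=(q^*A+F)^2=A^2+2\,q^*A\cdot F+F^2 .
\end{align*}
The term $q^*A\cdot F$ is nonnegative because $q^*A$ is nef and $F$ is effective. The term $F^2$ is nonpositive, and strictly negative unless $F=0$: $F$ is $p$-exceptional, so by the negativity lemma (negative definiteness of the intersection matrix of $p$-exceptional curves) $F^2<0$ whenever $F\neq0$. To control the sign of $L^2-A^2$, intersect with $F$. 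Since $F$ is $p$-exceptional, $p^*L\cdot F=0$, so
\begin{align*}
    0=p^*L\cdot F=q^*A\cdot F+F^2,
\end{align*}
that is, $q^*A\cdot F=-F^2$. Substituting this gives
\begin{align*}
    L^2=A^2-F^2 .
\end{align*}
The hypothesis $L^2=A^2$ therefore forces $F^2=0$, hence $F=0$ by negative definiteness.

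With $F=0$ we have $p^*L\sim_{\bQ}q^*A$. It remains to show that $\varphi$ is a morphism. Let $C$ be a $p$-exceptional curve. Then $q^*A\cdot C=p^*L\cdot C=0$, and since $A$ is ample, $C$ is contracted by $q$. Every curve contracted by $p$ is thus contracted by $q$. I expect this to be the only step needing care, and I will handle it with the rigidity lemma, or equivalently Zariski's main theorem: because $Y$ is normal, each fiber of $p$ is connected and is contracted by $q$, so $q$ factors through $p$ as $q=\varphi\circ p$ with $\varphi\colon Y\to X$ a morphism. This $\varphi$ is birational since $p$ and $q$ are.

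Finally, $p^*L\sim_{\bQ}q^*A=p^*\varphi^*A$, and pushing forward by $p$ (using $p_*p^*=\mathrm{id}$ on $\bQ$-Cartier divisors for the birational $p$) yields $L\sim_{\bQ}\varphi^*A$. The main point is the identity $L^2=A^2-F^2$, which follows from $p^*L\cdot F=0$; the factorization step is the standard rigidity argument.
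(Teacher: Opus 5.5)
Your proposal is correct and follows essentially the same route as the paper. You intersect with $F$ to get $q^*A\cdot F=-F^2$, force $F^2=0$ and hence $F=0$ by negative definiteness, and then factor $q$ through $p$ by rigidity using the normality of $Y$. The only difference is cosmetic: you write $L^2=A^2-F^2$ instead of the paper's $L^2=A^2+q^*A\cdot F$, which means you never need the nefness of $q^*A$.
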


\begin{proof}
Since $F$ is $p$-exceptional, $p^*L\cdot F=0$. Intersecting \eqref{eq:equal-square-pullback} with $F$, we obtain $q^*A\cdot F+F^2=0$. Hence
\begin{align*}
    L^2=(p^*L)^2=p^*L\cdot(q^*A+F)=(q^*A+F)\cdot q^*A=A^2+q^*A\cdot F.
\end{align*}
Since $A$ is ample and $F\geq0$, we have $q^*A\cdot F\geq0$. The equality $L^2=A^2$ gives $q^*A\cdot F=0$, and therefore $F^2=0$. The intersection matrix of the $p$-exceptional curves is negative definite. Since $F$ is $p$-exceptional, it follows that $F=0$. Thus $p^*L\sim_\bQ q^*A$.

For every $p$-exceptional curve $C$, we have
\begin{align*}
    q^*A\cdot C=p^*L\cdot C=0.
\end{align*}
Since $A$ is ample, $q$ contracts every fiber of $p$ and hence factors through $p$ by \cite[Lemma~1.15]{Debarre01}. Thus $\varphi\colon Y\to X$ is a birational morphism, and
\begin{align*}
    p^*L\sim_\bQ q^*A=p^*\varphi^*A.
\end{align*}
It follows that $L\sim_\bQ\varphi^*A$.
\end{proof}

\begin{proposition}\label{prop:ample-model-criterion}
Retain the notation above. Suppose that sections
\begin{align*}
    s_d\in H^0(S, d(K_S+C_1+C_2)),
    \qquad
    d\in\{2,6,7,9\},
\end{align*}
satisfy the following conditions:
\begin{enumerate}[label={\rm(\arabic*)}]
    \item Their zero divisors have no common irreducible component.
    \item For every prime $h$-exceptional curve $F$,
    \begin{align*}
        \min_{d\in\{2,6,7,9\}}\frac{\ord_F(s_d)}{d}=\lambda_F.
    \end{align*}
    \item the sections satisfy the defining equation of $X_{18}$, and the rational map
    \begin{align*}
        \Phi_S:=[s_2:s_6:s_7:s_9]
        \colon S\dashrightarrow X_{18}
    \end{align*}
    is birational.
\end{enumerate}
Set $A:=\cO_{\bP(2,6,7,9)}(1)|_{X_{18}}$. Then the lifted sections induce a birational morphism $\varphi\colon Y\to X_{18}$ such that
\begin{align*}
    K_Y+B_Y+E\sim_\bQ\varphi^*A.
\end{align*}
In particular, $X_{18}$ is the ample model of $K_Y+B_Y+E$.
\end{proposition}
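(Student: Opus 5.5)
We first apply Lemma~\ref{lem:section-lifting}: conditions (1) and (2) give lifted sections $\tilde s_d\in H^0(Y,d(K_Y+B_Y+E))$ for $d\in\{2,6,7,9\}$, and their zero divisors have no common irreducible component. Write $L:=K_Y+B_Y+E$. The lifted sections define a rational map $\Phi_Y=[\tilde s_2:\tilde s_6:\tilde s_7:\tilde s_9]\colon Y\dashrightarrow\bP(2,6,7,9)$. Since the $\tilde s_d$ differ from $h^*s_d$ only by the fixed twist $-d\Lambda$, we have $\Phi_Y=\Phi_S\circ h$ as rational maps. By (3), the image lies in $X_{18}$ and $\Phi_Y\colon Y\dashrightarrow X_{18}$ is birational. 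Call this map $\varphi$. Set $A:=\cO(1)|_{X_{18}}$. By the computation in Proposition~\ref{prop:X18-structure}, $A^2=\frac{1}{42}=L^2$, using \eqref{eq:L-square}.

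We aim to apply Lemma~\ref{lem:equal-square}. Take a resolution $p\colon W\to Y$ that resolves the indeterminacy of $\varphi$, with $q\colon W\to X_{18}$. Fix $N$ divisible by $\operatorname{lcm}(2,6,7,9)=126$ so that $\cO(N)$ is Cartier on $\bP(2,6,7,9)$. The monomials of weighted degree $N$ in the $\tilde s_d$ give sections of $NL$, and $q^*\cO(N)$ is the moving part of the linear system they span. Hence
\begin{align*}
    p^*(NL)=q^*(NA)+M_N,
\end{align*}
where $M_N\geq0$ is the fixed part of the pulled-back subsystem.

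The key point is that $M_N$ is $p$-exceptional. Its non-exceptional part is $p^*$ of the common zero locus of the degree-$N$ monomials on $Y$. A component $C$ in that locus would satisfy $\min_d \ord_C(\tilde s_d)/d>0$, which means $C$ is a common component of all $(\tilde s_d=0)$. Lemma~\ref{lem:section-lifting} excludes this. So base points of the subsystem on $Y$ are isolated, and $M_N$ is supported over them, hence $p$-exceptional. Dividing by $N$, Lemma~\ref{lem:equal-square} applies with $F=M_N/N$ and gives that $\varphi$ is a birational morphism with $L\sim_\bQ\varphi^*A$. Because $A$ is ample and $\varphi$ is birational with connected fibers (since $X_{18}$ is normal), $X_{18}=\operatorname{Proj}\bigoplus H^0(Y,mL)$ is the ample model.

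The main obstacle is making the identification $p^*(NL)=q^*(NA)+(\text{effective }p\text{-exceptional})$ rigorous in the weighted setting. First, $\Phi_Y$ must be determined by the degree-$N$ monomials. This holds because $\bP(2,6,7,9)\simeq\operatorname{Proj}$ of the $N$-th Veronese subring, which is generated by such monomials, so these monomials embed $\bP(2,6,7,9)$ projectively. Second, the pullback of $\cO(N)$ under $q$ must equal the moving part. I would handle this by working with the Veronese embedding directly: the map given by the monomial subsystem of $|NL|$ is the composition of $\varphi$ with the embedding $X_{18}\hookrightarrow\bP^M$ by $\cO(N)$. The standard description of pulling back a linear system then gives the decomposition, with the fixed part equal to the divisorial part of the base scheme.
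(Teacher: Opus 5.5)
Your proposal is correct and follows the paper's proof essentially step for step. You lift the sections by Lemma~\ref{lem:section-lifting}, identify $\varphi=\Phi_S\circ h$, and use all degree-$N$ monomials (which contain the pure powers $\tilde s_d^{N/d}$) on a resolution to get $p^*L\sim_\bQ q^*A+\frac{1}{N}M_N$ with $M_N\geq 0$ and $p$-exceptional. You then conclude by Lemma~\ref{lem:equal-square} from $L^2=A^2=\frac{1}{42}$, exactly as the paper does.

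The only point to tidy is the choice of $N$. As in the paper, take it sufficiently divisible so that $NL$ is Cartier ($Y$ is klt, hence $\bQ$-factorial) and $\cO(N)$ is very ample, since an arbitrary multiple of $126$ need not give a Veronese subring generated in degree one. Global generation of $\cO(N)$, which already holds when $126\mid N$, is in fact enough for your fixed-part argument.
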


\begin{proof}
Set $L:=K_Y+B_Y+E$. By Conditions~{\rm(1)} and~{\rm(2)} and
Lemma~\ref{lem:section-lifting}, the sections $s_d$ induce sections
$\tilde s_d\in H^0(Y,\cO_Y(dL))$, for $d\in\{2,6,7,9\}$, whose zero divisors have no common irreducible component.

These sections define a rational map
\begin{align*}
    \varphi:=[\tilde s_2:\tilde s_6:\tilde s_7:\tilde s_9]
    \colon Y\dashrightarrow\bP(2,6,7,9).
\end{align*}
By construction, $\varphi=\Phi_S\circ h$ as rational maps, so its image is contained in $X_{18}$. Since $h$ and $\Phi_S$ are birational, so is $\varphi\colon Y\dashrightarrow X_{18}$.

Choose a sufficiently large and divisible positive integer $m$ such that $mL$ is Cartier, $126=\operatorname{lcm}(2,6,7,9)\mid m$, and $\cO_{\bP(2,6,7,9)}(m)$ is very ample. Let $\iota_m\colon X_{18}\hookrightarrow\bP^N$ be the restriction to $X_{18}$ of the resulting embedding. Then $\iota_m^*\cO_{\bP^N}(1)=mA$.

Consider all monomial sections in $\tilde s_2,\tilde s_6,\tilde s_7,\tilde s_9$ of degree $m$:
\begin{align*}
    \tilde s_2^{e_2}
    \tilde s_6^{e_6}
    \tilde s_7^{e_7}
    \tilde s_9^{e_9}
    \in H^0(Y,mL),
    \qquad
    2e_2+6e_6+7e_7+9e_9=m.
\end{align*}
By the definition of $\varphi$, the rational map given by these sections is $\iota_m\circ\varphi$. These sections include $\tilde s_2^{m/2},\tilde s_6^{m/6},\tilde s_7^{m/7}, \tilde s_9^{m/9}$. Since the zero divisors
\begin{align*}
    (\tilde s_2=0),\quad
    (\tilde s_6=0),\quad
    (\tilde s_7=0),\quad
    (\tilde s_9=0)
\end{align*}
have no common irreducible component, the same holds for the zero divisors of these monomial sections.

Take a common resolution of $\varphi$ such that $p\colon W\to Y$ also resolves the common zeros of the monomial sections above:
\begin{center}
\begin{tikzpicture}[baseline=(current bounding box.center)]
    \node (W) at (0,1.2) {$W$};
    \node (Y) at (-1.6,0) {$Y$};
    \node (X) at (1.6,0) {$X_{18}$};
    \draw[->] (W)--node[left] {$p$} (Y);
    \draw[->] (W)--node[right] {$q$} (X);
    \draw[->,dashed] (Y)--node[below] {$\varphi$} (X);
\end{tikzpicture}
\end{center}
Let $F_m$ be the greatest common divisor of the zero divisors of these sections after pullback to $W$. Since their zero divisors on $Y$ have no common irreducible component, $F_m$ is effective and $p$-exceptional. By the choice of $p$, after removing $F_m$, the resulting sections have no common zeros and define the morphism $\iota_m\circ q$. Therefore
\begin{align*}
    p^*(mL)\sim(\iota_m\circ q)^*\cO_{\bP^N}(1)+F_m=q^*(mA)+F_m.
\end{align*}
In particular,
\begin{align*}
    p^*L\sim_\bQ q^*A+\frac{1}{m}F_m.
\end{align*}

Since $L^2=A^2=\frac{1}{42}$, Lemma~\ref{lem:equal-square} shows that $\varphi$ is a birational morphism and $L\sim_\bQ\varphi^*A$. As $A$ is ample, $X_{18}$ is the ample model of $L=K_Y+B_Y+E$.
\end{proof}

\subsection{The ample model and its boundary}

\begin{proposition}\label{prop:ample-model-two-types}
For each of the two extraction types in Lemma~\ref{lem:exhaustion}, the ample model of $K_Y+B_Y+E$ is $X_{18}$.
\end{proposition}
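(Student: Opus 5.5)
We apply Proposition~\ref{prop:ample-model-criterion} in each of the two cases. This requires writing down explicit weighted homogeneous polynomials $s_2,s_6,s_7,s_9$ on $S=\bP(1,2,3)$ in the normal-form coordinates of Lemma~\ref{lem:rank-one-normal-forms}, and then checking Conditions (1)--(3). By Lemma~\ref{lem:exhaustion}, the only $h$-exceptional curves are $F_1,F_2$ over $P=[1:1:0]$, with rays $(5,2),(2,5)$ in the analytic coordinates $(r,w)$ of Lemma~\ref{lem:A26-local-coordinates} or Lemma~\ref{lem:I22-local-coordinates}. Their coefficients are $\lambda_{F_1}=\lambda_{F_2}=1$. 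So Condition (2) reads
\begin{align*}
\min_{d}\frac{\ord_{F_i}(s_d)}{d}=1,\qquad i=1,2.
\end{align*}
By Lemma~\ref{lem:toric-orders}, we have $\ord_{F_1}(r^aw^b)=5a+2b$ and $\ord_{F_2}(r^aw^b)=2a+5b$. The natural candidates are built from $u$, $x_3$, and the boundary equation $G=G_6$ or $x_3G_4$. Indeed, $\ord_{F_i}(u)=2$ by \eqref{eq:Gamma-order-over-P}, and $\ord_{F_i}(rw)=7$.

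\emph{Choosing the sections.} The boundary section $s_7$ should vanish on $C_1+C_2$: take $s_7=x_1G_6$ in the $(A_2^6)$ case and $s_7=x_3G_4$ in the $(I_2^2)$ case. Each has order exactly $7$ along $F_1,F_2$, so equality in Condition (2) is attained at $d=7$. This also matches $B_{18}=(t=0)$, where $t$ has weight $7$. Since $\ord_{F_i}(u)=2$, take $s_2=u^2\cdot(\text{unit})$; in degree $2$ on $S$, the choice $s_2=x_1^2u$ fails ($\ord=2<4$), so we need a section with order $\geq 4$. Degree $2$ only contains $x_1^2$ and $x_2$, so this must be reconsidered. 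We will instead use degree $2$ with respect to $K_Y+B_Y+E$, which corresponds to $\cO_S(2)$ twisted by the boundary. Concretely, we will look for $s_d\in H^0(\cO_S(d))$ with $\ord_{F_i}(s_d)\geq d$, by solving the linear conditions in the monomial basis. For $d=2$, the requirement $\ord\geq2$ is satisfied by $s_2=u$. (We set $\lambda=1$ per unit degree, so the requirement is $\ord\geq d$, and $u$ gives exactly $2$.) Similarly, we search for $s_6$ with order $\geq6$ (e.g.\ $u^3$ or a combination like $x_3^2-\ldots$), $s_7$ as above, and $s_9$ with order $\geq 9$. These are found by expanding monomials in $(r,w)$ and imposing vanishing of low-weight terms. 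We then guess the relation: after rescaling, $s_9^2+s_6^2(s_6-s_2^3)+s_2^2s_7^2=0$ should hold identically on $S$. We pick $s_6,s_9$ so that this identity holds, mimicking the shape of $F$ (for instance $s_6$ involving $u^3$ and $s_9$ involving $x_3u^3$).

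\emph{Verification.} Condition (1) holds because the $s_d$ are explicit and coprime. Condition (3) consists of the polynomial identity and birationality of $\Phi_S$. For birationality, we will show that the function field is generated: ratios such as $s_9/(s_2 s_7)$, $s_6/s_2^3$ and $s_7^2/s_2^7$ recover $x_2/x_1^2$ and $x_3/x_1^3$ on the chart $x_1\neq0$. Alternatively, we will compare degrees: $\Phi_S$ has degree $\frac{(1/6)-(1/7)}{1/42}\cdot$, which has to be $1$ by the volume equality once we know the image is $X_{18}$.

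The main obstacle is finding $s_6$ and $s_9$ satisfying both the exact order conditions at $F_1,F_2$ and the defining relation of $X_{18}$, separately in each case. I expect this to require trial expansions in $(r,w)$ using $u=(w-r)/2\theta$ or $u=r+w$.
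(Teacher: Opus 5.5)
\textbf{There is a genuine gap: the sections $s_6,s_9$ are never constructed.}

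Your framework is the same as the paper's. You reduce to Proposition~\ref{prop:ample-model-criterion}, observe that the only $h$-exceptional curves are $F_1=(5,2)$ and $F_2=(2,5)$ over $P$ with $\lambda_{F_1}=\lambda_{F_2}=1$, and take $s_2$ proportional to $u$ and $s_7$ proportional to the equation of $C_1+C_2$. But the proposal stops exactly where the proof has to begin. Once you commit to Proposition~\ref{prop:ample-model-criterion}, the whole content of the statement is the existence of $s_6,s_9$ such that $s_9^2+s_6^2(s_6-s_2^3)+s_2^2s_7^2=0$ holds identically on $S$, together with the order conditions. You only say that such sections ``should'' exist and that you will ``pick'' them.

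Consequently none of the hypotheses of Proposition~\ref{prop:ample-model-criterion} is verified. The order condition for $d=6,9$, the absence of a common component, the degree-$18$ relation, and the birationality of $\Phi_S$ all concern the missing sections. Nothing in your argument shows a priori that they exist; that is essentially what is being proved.

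The normalizations are also not free. In the $(A_2^6)$ case, take $s_2=u$ and the natural ansatz $s_6=aG_6$, $s_9=bx_3G_6$, $s_7=\kappa x_1G_6$. Then the relation forces $a=-1$, $b=\pm1$ and $\kappa=\pm\mathrm{i}\sqrt3$. So your $s_7=x_1G_6$ cannot be used as is.

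\textbf{What the paper supplies.} The missing ingredient is an explicit choice, which the paper makes separately in each case.
\begin{itemize}
\item For $(A_2^6)$, your heuristic does work. The paper takes $s_2=-u$, $s_6=G_6$, $s_7=\sqrt3\,x_1G_6$, $s_9=\mathrm{i}x_3G_6$. The relation then reduces to $G_6^2(-x_3^2+G_6+u^3+3x_1^2u^2)=0$.
\item For $(I_2^2)$, your heuristic does not give the sections. The paper takes $s_2=cu$, $s_6=-x_3(x_3-2x_1u)$, $s_7=-dx_3G_4$, $s_9=x_3(x_3^2-4x_2u^2)+3ux_3G_4$, with $c^3=-2$ and $c^2d^2=-1$. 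Here $s_6$ contains no $u^3$, and it is divisible by $x_3$ but not by $G_4$.
\end{itemize}
One must then check, in the coordinates $(r,w)$, that $\ord_{F_1}(s_6)=7$, $\ord_{F_2}(s_6)=6$ and $\ord_{F_i}(s_9)=9$. One must also check that the common zero locus is $\{P\}$, and that the map is birational via an explicit inverse.

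\textbf{Smaller points.}
\begin{itemize}
\item Your degree remark for birationality is garbled. Its correct form is: once $\varphi$ is shown to be dominant onto $X_{18}$ of degree $k$, the computation in Lemma~\ref{lem:equal-square} with $(q^*A)^2=kA^2$ gives $\frac1{42}=L^2\geq \frac{k}{42}$, so $k=1$. This is a legitimate shortcut past the explicit inverse, but dominance still has to be checked on the actual sections.
\item Your early discussion of $u^2$ and $x_1^2u$ is off. These are not degree-$2$ sections, and the requirement is $\ord_{F_i}(s_d)\geq d$, not $\geq 2d$.
\end{itemize}
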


\begin{proof}
We now treat the two extraction types separately. We use the notation and the fixed local model at the smooth $P$ of Lemma~\ref{lem:exhaustion}. In this fixed local model, the torus-invariant curves correspond to $(r=0),(w=0)$. Let $(p_1,q_1)$ and $(p_2,q_2)$ be the primitive generators of the two new rays in the local model at the smooth two-branch point $P$, ordered by increasing slope. Let $F_1$ and $F_2$ be the corresponding exceptional curves. We write $F_i=(p_i,q_i)$ for short.

\smallskip
\noindent
\emph{The case $(A_2^6)$: The extracted curves are $F_1=(5,2)$ and $F_2=(2,5)$.}

Write $S=\bP(1,2,3)_{[x_1:x_2:x_3]}$, where $\deg x_i=i$. By Lemma~\ref{lem:rank-one-normal-forms}, we may choose weighted coordinates on $S$ such that if we set
\begin{align*}
    u:=x_2-x_1^2, \qquad G_6:=x_3^2-3x_1^2u^2-u^3,
\end{align*}
then $C_{(1)}=(x_1=0)$ and $C_{(6)}=(G_6=0)$. The curve $C_{(6)}$ has a node at $P=[1:1:0]$.

Set
\begin{align}
    s_2&=-u,&
    s_6&=G_6,&
    s_7&=\sqrt3\,x_1G_6,&
    s_9&=\mathrm{i}x_3G_6.
\end{align}
Their common zero locus is the single point $P$. A direct substitution gives
\begin{align}
    s_9^2+s_6^2(s_6-s_2^3)+s_2^2s_7^2=0.
\end{align}
Hence these sections define a rational map
\begin{align*}
    \Phi_S:=[s_2:s_6:s_7:s_9]\colon S\dashrightarrow X_{18}.
\end{align*}
On the dense open set $(s_6\neq0)$, the original weighted coordinates are recovered from
\begin{align*}
    x_1=\frac{s_7}{\sqrt{3}\,s_6}, \quad
    x_2=-s_2+\frac{s_7^2}{3s_6^2}, \quad
    x_3=-\frac{\mathrm{i}s_9}{s_6}.
\end{align*}
Thus $\Phi_S$ is birational onto its image. Its image has dimension two and is therefore dense in the irreducible surface $X_{18}$. Consequently, $\Phi_S$ is birational.

It remains to check the orders along the two extracted curves. Using the ordered analytic coordinates $(r,w)$ introduced in Lemma \ref{lem:A26-local-coordinates} at $P$, the sections have the following expressions:
\begin{align*}
    s_2&=\frac{r-w}{2\theta},&
    s_6&=rw,&
    s_7&=\sqrt3\,rw,&
    s_9&=\frac{\mathrm{i}}2(r^2w+rw^2).
\end{align*}
Lemma~\ref{lem:toric-orders} gives
\begin{center}
\renewcommand{\arraystretch}{1.2}
\begin{tabular}{@{}ccccc@{}}
\toprule
$F$
& $\ord(s_2)/2$
& $\ord(s_6)/6$
& $\ord(s_7)/7$
& $\ord(s_9)/9$\\
\midrule
$F_1=(5,2)$ & $1$ & $7/6$ & $1$ & $1$\\
$F_2=(2,5)$ & $1$ & $7/6$ & $1$ & $1$\\
\bottomrule
\end{tabular}
\end{center}
For both extracted curves, $\lambda_F=1$. Hence
\begin{align*}
    \min_{d\in\{2,6,7,9\}}
    \frac{\ord_F(s_d)}{d}
    =
    \lambda_F.
\end{align*}
Thus all the assumptions of Proposition~\ref{prop:ample-model-criterion} are satisfied in this case.

\smallskip
\noindent
\emph{The case $(I_2^2)$: The extracted curves are $F_1=(5,2)$ and $F_2=(2,5)$.}

Write $S=\bP(1,2,3)_{[x_1:x_2:x_3]}$, where $\deg x_i=i$. By Lemma~\ref{lem:rank-one-normal-forms}, we may choose weighted coordinates on $S$ such that if we set
\begin{align*}
    u:=x_2-x_1^2, \qquad G_4:=x_2^2-x_1^4-x_1x_3=u^2+2x_1^2u-x_1x_3,
\end{align*}
then $C_{(3)}=(x_3=0)$ and $C_{(4)}=(G_4=0)$. They meet transversely at $[1:1:0]$ and $[1:-1:0]$. Since the involution
$x_2\mapsto-x_2$ exchanges these points and preserves the
boundary, we may work at $P=[1:1:0]$.

Choose constants $c,d\in\bC^*$ such that
\begin{align*}
    c^3=-2,\qquad c^2 d^2=-1.
\end{align*} 
and set
\begin{align}\label{eq:I-sections-one}
    s_2:=cu, \quad s_6:=-x_3(x_3-2x_1u), \quad s_7:=-dx_3G_4, \quad s_9:=x_3(x_3^2-4x_2u^2)+3ux_3G_4.
\end{align}
Their common zero locus is the single point $P$. A direct substitution gives
\begin{align}
    s_9^2+s_6^2(s_6-s_2^3)+s_2^2s_7^2=0.
\end{align}
Hence these sections define a rational map
\begin{align*}
    \Phi_S:=[s_2:s_6:s_7:s_9]\colon S\dashrightarrow X_{18}.
\end{align*}

We claim that $\Phi_S$ is birational. Set
\begin{align*}
    t_2:=u, \quad t_6:=x_3(x_3-2x_1u), \quad
    t_7:=-x_3G_4, \quad t_9:=x_3(x_3^2-4x_2u^2).
\end{align*}
Then
\begin{align*}
    s_2=ct_2,\qquad
    s_6=-t_6,\qquad
    s_7=dt_7,\qquad
    s_9=t_9-3t_2t_7.
\end{align*}
Hence $\Phi_S=\tau\circ[t_2:t_6:t_7:t_9]$, where
\begin{align*}
    \tau[T_2:T_6:T_7:T_9]:=[cT_2:-T_6:dT_7:T_9-3T_2T_7]
\end{align*}
is an automorphism of $\bP(2,6,7,9)$. On a dense open set $(t_2t_6(t_6-2t_2^3)\neq 0)$, the map $[t_2:t_6:t_7:t_9]$ admits the rational inverse
\begin{align*}
    x_1=
    \frac{t_2^2t_9+t_7(t_6-4t_2^3)}
         {t_6(t_6-2t_2^3)}, \qquad
    x_2=t_2+x_1^2, \qquad 
    x_3=\frac{x_1t_6-t_7}{t_2^2}.
\end{align*}
Therefore $\Phi_S$ is birational.

It remains to check the orders along the two extracted curves. Using the ordered analytic coordinates $(r,w)$ introduced in Lemma \ref{lem:I22-local-coordinates} at $P$, set $A:=2+r+w$, which is a unit at $P$. The sections have the following expressions:
\begin{align*}
    s_2&=c(r+w),\quad 
    &s_6&=-Ar(r^2+rw-2w),\\
    s_7&=-dA^2rw,\quad
    &s_9&=Ar\left(
        (r+w)^4
        +Aw\bigl(w-r-(r+w)(2r+w)\bigr)
    \right).
\end{align*}
Lemma~\ref{lem:toric-orders} gives
\begin{center}
\renewcommand{\arraystretch}{1.2}
\begin{tabular}{@{}ccccc@{}}
\toprule
$F$
& $\ord(s_2)/2$
& $\ord(s_6)/6$
& $\ord(s_7)/7$
& $\ord(s_9)/9$\\
\midrule
$F_1=(5,2)$ & $1$ & $7/6$ & $1$ & $1$\\
$F_2=(2,5)$ & $1$ & $1$   & $1$ & $1$\\
\bottomrule
\end{tabular}
\end{center}
For both extracted curves, $\lambda_F=1$. Hence
\begin{align*}
    \min_{k\in\{2,6,7,9\}}\frac{\ord_F(s_k)}{k}=\lambda_F.
\end{align*}
This verifies all the assumptions of Proposition~\ref{prop:ample-model-criterion} in this case. 

In either case, Proposition~\ref{prop:ample-model-criterion} shows that the lifted sections induce a birational morphism
\begin{align*}
    \varphi\colon Y\to X_{18}, \qquad L\sim_{\bQ}\varphi^*\mathcal O_{X_{18}}(1).
\end{align*}
Hence $X_{18}$ is the ample model of $L$, and $\varphi$ is the corresponding morphism. 
\end{proof}

It remains to identify the boundary on this model.

\begin{proposition}\label{prop:ample-model-boundary}
For each of the two cases above,
\begin{align*}
    \varphi_*B_Y=B_{18},
    \qquad
    \varphi_*E=0.
\end{align*}
\end{proposition}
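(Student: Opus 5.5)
We would first settle $\varphi_*E=0$ and then identify $\varphi_*B_Y$. By Proposition~\ref{prop:ample-model-two-types}, $L:=K_Y+B_Y+E\sim_\bQ\varphi^*A$ with $A=\cO_{X_{18}}(1)|_{X_{18}}$ ample. Hence a curve $C\subset Y$ is contracted by $\varphi$ if and only if $L\cdot C=0$. By \eqref{eq:L-square}, $L\cdot E=0$ and $L\cdot B_Y=\frac16>0$. Therefore $\varphi$ contracts $E$, so $\varphi_*E=0$, while $\varphi_*B_Y$ is an irreducible curve $B'$ with $A\cdot B'=\frac16$. Note that for $B_{18}=(t=0)|_{X_{18}}$ we have $A\cdot B_{18}=7A^2=\frac{7}{42}=\frac16$, so the degrees already agree.

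To show $B'=B_{18}$, we will compute $\varphi^*B_{18}$ via the section $\tilde s_7$. By construction, $\varphi^*(t=0)=(\tilde s_7=0)$. The proof of Lemma~\ref{lem:section-lifting} gives
\[
(\tilde s_7=0)=h_*^{-1}(s_7=0)+\sum_G(\ord_G(s_7)-7\lambda_G)G .
\]
Here $G$ ranges over $F_1,F_2$, and by the tables in Proposition~\ref{prop:ample-model-two-types} we have $\ord_G(s_7)/7=1=\lambda_G$. So no exceptional curve of $h$ appears, and $(\tilde s_7=0)=h_*^{-1}(s_7=0)$.

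We then read off $(s_7=0)$ in each case. In the $(A_2^6)$ case, $s_7=\sqrt3x_1G_6$, so $(s_7=0)=C_{(1)}+C_{(6)}$. In the $(I_2^2)$ case, $s_7=-dx_3G_4$, so $(s_7=0)=C_{(3)}+C_{(4)}$. In both cases the zero divisor is $C_1+C_2$, with multiplicities one because the degrees $1+6$ and $3+4$ both equal $7$. Hence $(\tilde s_7=0)=B_Y+E$, and therefore
\[
\varphi^*B_{18}=\varphi^*(t=0)|_{X_{18}}=B_Y+E .
\]

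Pushing forward gives $B_{18}=\varphi_*(B_Y+E)=\varphi_*B_Y$, using that $\varphi$ is birational and that the divisor $(t=0)|_{X_{18}}$ is reduced and irreducible by Proposition~\ref{prop:X18-structure}.

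The main technical point is justifying $\varphi^*(t=0)=(\tilde s_7=0)$ as divisors. Since $\varphi$ is a morphism defined by $[\tilde s_2:\tilde s_6:\tilde s_7:\tilde s_9]$, the pullback of the $\bQ$-Cartier divisor $(t=0)$ equals the zero divisor of $\tilde s_7$, viewed as a section of $\varphi^*\cO(7)\simeq\cO_Y(7L)$. This identification is the isomorphism $7L\sim_\bQ\varphi^*\cO(7)$ established in the proof of Proposition~\ref{prop:ample-model-criterion}, where $F_m=0$. As a cross-check, we will also verify $L\cdot B_Y=A\cdot B_{18}$ directly; this is the degree comparison noted above.
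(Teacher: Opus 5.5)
Your argument is correct and uses the same ingredients as the paper's proof. In both, $t$ pulls back to a multiple of $s_7$, $(s_7=0)=C_1+C_2$, and $L\cdot E=0$, $L\cdot B_Y=\frac16$ force $\varphi_*E=0$ and $\varphi_*B_Y\neq0$.

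The only difference is the last step. The paper argues set-theoretically: it shows $\varphi(B_Y)\subseteq B_{18}$ and then uses that $B_{18}$ is irreducible (Proposition~\ref{prop:X18-structure}). You instead prove the divisorial identity $\varphi^*B_{18}=B_Y+E$. This costs two extra checks, $\ord_{F_i}(s_7)=7\lambda_{F_i}$ and $F_m=0$. The latter is best phrased via $\tilde s_7^{m/7}$ and $t^{m/7}$, since $\cO_{X_{18}}(7)$ is a priori only reflexive. In return, pushing forward gives $(t=0)|_{X_{18}}=\varphi_*B_Y$, which is prime, so your appeal to Proposition~\ref{prop:X18-structure} for reducedness and irreducibility is not needed.
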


\begin{proof}
Recall that $B_{18}={X_{18}}\cap (t=0)$. Under $\Phi_S=[s_2:s_6:s_7:s_9]$, the coordinate $t$ pulls back to a nonzero scalar multiple of $s_7$. The zero divisor $(s_7=0)$ is
\begin{align*}
    C_{(1)}+C_{(6)}=(x_1G_6=0) &\qquad \text{ in the } (A_2^6) \text{ case}\\
    C_{(3)}+C_{(4)}=(x_3G_4=0) &\qquad\text{ in the } (I_2^2) \text{ case}
\end{align*}
Consequently, $\tilde s_7$ vanishes identically along both $B_Y$ and $E$, so $\varphi(B_Y)$ and $\varphi(E)$ are contained in $B_{18}$. By Proposition~\ref{prop:ample-model-two-types}, the projection formula, and Equation~\eqref{eq:L-square},
\begin{align*}
    \mathcal O_{X_{18}}(1)\cdot\varphi_*E
    &=L\cdot E=0,\\
    \mathcal O_{X_{18}}(1)\cdot\varphi_*B_Y
    &=L\cdot B_Y=\frac{1}{6}.
\end{align*}
Since $\mathcal O_{X_{18}}(1)$ is ample, it follows that $\varphi_*E=0,\varphi_*B_Y\neq0$. Thus $\varphi(B_Y)$ is a curve contained in $B_{18}$. By Proposition \ref{prop:X18-structure}, $B_{18}$ is a smooth rational curve. Therefore $\varphi(B_Y)=B_{18}$. Since $\varphi$ is a birational morphism between normal surfaces, $\varphi_*B_Y=B_{18}$.
\end{proof}

\begin{remark}
Both extraction configurations in Lemma~\ref{lem:exhaustion} are realized. The $(I_2^2)$ configuration recovers the construction in Proposition~\ref{prop:construction}, while the $(A_2^6)$ configuration gives the same ample model $X_{18}$ and the same boundary $B_{18}$ by the preceding propositions.
\end{remark}

\subsection{Proof of Theorem~\ref{thm:uniqueness}}

\begin{proof}[Proof of Theorem~\ref{thm:uniqueness}]
Let $(X,B)$ be an equality pair. By Proposition~\ref{prop:reduction-to-two-types}, its associated morphism $h\colon Y\to S$ has one of the two extraction types in Lemma~\ref{lem:exhaustion}. Proposition~\ref{prop:ample-model-two-types} shows that the ample model of $K_Y+B_Y+E$ is $X_{18}$.

On the other hand, $K_Y+B_Y+E=f^*(K_X+B)$. So $X$ is the ample model of the same divisor. Hence $X\simeq X_{18}$. Finally, Proposition~\ref{prop:ample-model-boundary} identifies $B=f_*(B_Y+E)$ with $B_{18}$. Therefore $(X,B)\simeq(X_{18},B_{18})$.
\end{proof}

\begin{remark}
Let
\begin{align*}
    C_A:=C_{(1)}+C_{(6)},\qquad C_I:=C_{(3)}+C_{(4)}
\end{align*}
be the boundaries in the cases $(A_2^6)$ and $(I_2^2)$, respectively.

Although the two rank-one exceptional pairs are not isomorphic, our construction above shows that they belong to the same crepant birational log Calabi--Yau class. Indeed, for $\star\in\{A,I\}$, the corresponding extraction satisfies
\begin{align*}
    K_{Y_\star}+\frac{6}{7}(B_{Y_\star}+E_\star)
    &={h_\star}^*
    \left(K_{S_\star}+\frac{6}{7}C_\star\right)\\
    &={f_\star}^*
    \left(K_{X_{18}}+\frac{6}{7}B_{18}\right).
\end{align*}

Moreover, the two models $(Y_A,B_{Y_A}+E_A)$ and $(Y_I,B_{Y_I}+E_I)$ are isomorphic. Indeed, the preceding equalities give
\begin{align*}
    a\left(E_A,X_{18},\frac67B_{18}\right)
    =
    a\left(E_I,X_{18},\frac67B_{18}\right)
    =
    \frac17.
\end{align*}
By Theorem~\ref{thm:equality-properties}, $\delta\left(X_{18},\frac67B_{18}\right)=2$, while $B_{18}$ itself has log discrepancy $\frac17$. Hence $E_A$ and $E_I$ define the same valuation. Since both $f_A$ and $f_I$ extract only this valuation, the induced birational map $Y_A\dashrightarrow Y_I$ is an isomorphism in codimension one. As there are no nontrivial small birational maps between normal surfaces, it is an isomorphism over $X_{18}$, preserving the corresponding boundaries.

Identifying these two models with $(Y,B_Y+E)$, we obtain the following diagram, in which all arrows are crepant:
\begin{center}
\begin{tikzcd}[column sep=2.5em,row sep=3.5em]
& \left(Y,\frac67(B_Y+E)\right)
  \arrow[dl,"h_A"']
  \arrow[d,"f"]
  \arrow[dr,"h_I"] & \\
\left(S_A,\frac67C_A\right)
&
\left(X_{18},\frac67B_{18}\right)
&
\left(S_I,\frac67C_I\right).
\end{tikzcd}
\end{center}
Thus $(A_2^6)$ and $(I_2^2)$ not only belong to the same crepant birational log Calabi--Yau class, but also give two distinct Mori fiber space of $\big(K_Y+\frac{6}{7}B+(\frac{6}{7}-\epsilon)E\big)$-MMP started from $Y$.
\end{remark}

\section{Proof of Theorem \ref{thm:main-thm}}

\begin{proof}[Proof of Theorem \ref{thm:main-thm}]
By the lower bound, Proposition~\ref{prop:construction}, and Theorem~\ref{thm:uniqueness}, it remains only to show that $\frac{1}{42}$ is an infinite accumulation point of $\operatorname{Vol}(2,\{0\})$.

Since $X$ is klt, it is $\bQ$-factorial. The equality pair $(X,B)$ satisfies the assumptions of \cite[Theorem~1.4]{Shao26p}. Hence its volume $\frac{1}{42}$ is an infinite accumulation point of $\operatorname{Vol}(2,\{0,1\})$. 

We recall the construction in the proof of \cite[Theorem~1.4]{Shao26p}. Let $f\colon Y\to X$, $B_Y$, and $E$ be as in Lemma~\ref{lem:boundary-center-extraction}. Apply the construction in \cite[Proposition~3.1]{Shao26p} to the pair $(Y,B_Y+E)$. By construction, $E$ is the only irreducible curve having intersection zero with $K_Y+B_Y+E$. Set $y:=B_Y\cap E$. Let $l$ be the integer used in the proof of \cite[Proposition~3.1]{Shao26p}. For each $n\geq1$, take $(p_1,\ldots,p_n)=(l+n,\ldots,l+1)$ and choose $q_1,\ldots,q_n$ successively as in that proof. The construction gives a morphism $\pi\colon W\to Y$ centered at $y$, together with a morphism $g\colon W\to Z$ contracting $C_0:=\pi_*^{-1}E$. Set
\begin{align*}
    C_{n+1}:=\pi_*^{-1}B_Y,
    \qquad
    B_Z:=g_*C_{n+1}.
\end{align*}
Since $g$ contracts only $C_0$, the curve $B_Z$ is a rational curve. By the construction in \cite[Proposition~3.1]{Shao26p}, $(Z,B_Z)$ is an lc surface pair, $K_Z+B_Z$ is big and nef. We define divisor $\overline B_W$ by $K_W+\overline B_W=g^*(K_Z+B_Z)$.

By the choice of $l$, we have $\frac16\geq\frac1l$, and hence $l\geq6$. Since $C_0\cdot C_{n+1}=0$, we obtain
\begin{align*}
    (K_Z+B_Z)\cdot B_Z
    &=(K_W+\overline B_W)\cdot C_{n+1}\\
    &=(K_Y+B_Y+E)\cdot B_Y-\frac1{p_n}\\
    &=\frac16-\frac1{p_n}>0.
\end{align*}
By abundance for lc surface pairs, $(Z,B_Z)$ has an ample model. The above computation shows that $B_Z$ is not contracted by the corresponding morphism. Replacing $(Z,B_Z)$ by its ample model and retaining the same notation, $(Z,B_Z)$ remains an lc surface pair, $B_Z$ remains a rational curve, and $K_Z+B_Z$ is ample. This replacement does not change the volume.

Since $B_Z$ is a rational curve and $(K_Z+B_Z)\cdot B_Z>0$, adjunction formula shows that $B_Z$ cannot be a smooth curve contained in the smooth locus of $Z$. Hence $B_Z$ is an accessible non-klt center. Moreover, $B_Z$ has coefficient one and geometric genus zero.

Let $\mathcal W$ be the set consisting of all volumes $\operatorname{vol}(Z,K_Z+B_Z)$ obtained from the above choices of parameters $(p_1,q_1,\cdots,p_n,q_n)$. Applying \cite[Theorem~1.1]{AlexeevLiu19accpoint} with coefficient set $\{0\}$ to these pairs, we obtain $\mathcal W\subseteq \mathrm{Vol}^{(1)}(2,\{0\})$. Repeatedly taking accumulation points gives $\mathcal W^{(n)} \subseteq \mathrm{Vol}^{(n+1)}(2,\{0\})$ for every $n\geq0$.

The iterated limit argument in the proof of \cite[Proposition~3.1]{Shao26p} shows that $\frac1{42}\in\mathcal W^{(n)}$ for every $n\geq1$. Therefore $\frac1{42}\in \mathrm{Vol}^{(\infty)}(2,\{0\})$ and hence $m_2\leq\frac1{42}$. This completes the proof.
\end{proof}

\bibliographystyle{amsalpha}
\bibliography{references}

@book {Kawakita24book,
    AUTHOR = {Kawakita, Masayuki},
     TITLE = {Complex algebraic threefolds},
    SERIES = {Cambridge Studies in Advanced Mathematics},
    VOLUME = {209},
 PUBLISHER = {Cambridge University Press, Cambridge},
      YEAR = {2024},
     PAGES = {xii+490},
      ISBN = {978-1-108-84423-9},
   MRCLASS = {14E30 (14-02 14E05 14E15 14J30 14J45)},
  MRNUMBER = {4653048},
MRREVIEWER = {Andreas\ H\"oring},
}

@article {Ambro06toricmld,
    AUTHOR = {Ambro, Florin},
     TITLE = {The set of toric minimal log discrepancies},
   JOURNAL = {Cent. Eur. J. Math.},
  FJOURNAL = {Central European Journal of Mathematics},
    VOLUME = {4},
      YEAR = {2006},
    NUMBER = {3},
     PAGES = {358--370},
      ISSN = {1895-1074,1644-3616},
   MRCLASS = {14M25 (14B05 14E15)},
  MRNUMBER = {2233855},
MRREVIEWER = {Harry\ Tamvakis},
       DOI = {10.2478/s11533-006-0013-x},
       URL = {https://doi.org/10.2478/s11533-006-0013-x},
}

@article {Shao26p,
    AUTHOR = {Shao, Weili},
     TITLE = {On infinite accumulation points of log canonical volumes},
   JOURNAL = {Bull. Lond. Math. Soc.},
  FJOURNAL = {Bulletin of the London Mathematical Society},
    VOLUME = {58},
      YEAR = {2026},
    NUMBER = {6},
     PAGES = {Paper No. e70401},
      ISSN = {0024-6093,1469-2120},
   MRCLASS = {14},
  MRNUMBER = {5079794},
       DOI = {10.1112/blms.70401},
       URL = {https://doi.org/10.1112/blms.70401},
}

@incollection {Shokurov00Complements-on-surfaces,
    AUTHOR = {Shokurov, V. V.},
     TITLE = {Complements on surfaces},
      NOTE = {Algebraic geometry, 10},
   JOURNAL = {J. Math. Sci. (New York)},
  FJOURNAL = {Journal of Mathematical Sciences (New York)},
    VOLUME = {102},
      YEAR = {2000},
    NUMBER = {2},
     PAGES = {3876--3932},
      ISSN = {1072-3374},
   MRCLASS = {14E30 (14J26 14J30)},
  MRNUMBER = {1794169},
MRREVIEWER = {I.\ Dolgachev},
       DOI = {10.1007/BF02984106},
       URL = {https://doi.org/10.1007/BF02984106},
}

@article {ChenHan21complements,
    AUTHOR = {Chen, Guodu and Han, Jingjun},
     TITLE = {Boundedness of {$(\epsilon, n)$}-complements for surfaces},
   JOURNAL = {Adv. Math.},
  FJOURNAL = {Advances in Mathematics},
    VOLUME = {383},
      YEAR = {2021},
     PAGES = {Paper No. 107703, 40},
      ISSN = {0001-8708,1090-2082},
   MRCLASS = {14E30 (14C20 14J17 14J27 14J45)},
  MRNUMBER = {4236289},
MRREVIEWER = {James\ McKernan},
       DOI = {10.1016/j.aim.2021.107703},
       URL = {https://doi.org/10.1016/j.aim.2021.107703},
}

@book {Kollar13book,
    AUTHOR = {Koll\'ar, J\'anos},
     TITLE = {Singularities of the minimal model program},
    SERIES = {Cambridge Tracts in Mathematics},
    VOLUME = {200},
      NOTE = {With a collaboration of S\'andor Kov\'acs},
 PUBLISHER = {Cambridge University Press, Cambridge},
      YEAR = {2013},
     PAGES = {x+370},
      ISBN = {978-1-107-03534-8},
   MRCLASS = {14E30 (14B05)},
  MRNUMBER = {3057950},
MRREVIEWER = {Tommaso\ De Fernex},
       DOI = {10.1017/CBO9781139547895},
       URL = {https://doi.org/10.1017/CBO9781139547895},
}

@article {BCHM,
    AUTHOR = {Birkar, Caucher and Cascini, Paolo and Hacon, Christopher D.
              and McKernan, James},
     TITLE = {Existence of minimal models for varieties of log general type},
   JOURNAL = {J. Amer. Math. Soc.},
  FJOURNAL = {Journal of the American Mathematical Society},
    VOLUME = {23},
      YEAR = {2010},
    NUMBER = {2},
     PAGES = {405--468},
      ISSN = {0894-0347,1088-6834},
   MRCLASS = {14E30 (14E05)},
  MRNUMBER = {2601039},
MRREVIEWER = {Mark\ Gross},
       DOI = {10.1090/S0894-0347-09-00649-3},
       URL = {https://doi.org/10.1090/S0894-0347-09-00649-3},
}

@book {Prokhorov01lectures,
    AUTHOR = {Prokhorov, Yuri G.},
     TITLE = {Lectures on complements on log surfaces},
    SERIES = {MSJ Memoirs},
    VOLUME = {10},
 PUBLISHER = {Mathematical Society of Japan, Tokyo},
      YEAR = {2001},
     PAGES = {viii+130},
      ISBN = {4-931469-12-4},
   MRCLASS = {14E30 (14B05 14E05 14J26 14J27)},
  MRNUMBER = {1830440},
MRREVIEWER = {Massimiliano\ Mella},
}

@article {Fujino12MMP,
    AUTHOR = {Fujino, Osamu},
     TITLE = {Minimal model theory for log surfaces},
   JOURNAL = {Publ. Res. Inst. Math. Sci.},
  FJOURNAL = {Publications of the Research Institute for Mathematical
              Sciences},
    VOLUME = {48},
      YEAR = {2012},
    NUMBER = {2},
     PAGES = {339--371},
      ISSN = {0034-5318,1663-4926},
   MRCLASS = {14E30},
  MRNUMBER = {2928144},
MRREVIEWER = {Paul\ A.\ Hacking},
       DOI = {10.2977/PRIMS/71},
       URL = {https://doi.org/10.2977/PRIMS/71},
}

@misc{LiuLiu26minimal,
      title={The minimal volume of surfaces of log general type with non-empty non-klt locus}, 
      author={Jihao Liu and Wenfei Liu},
      year={2023},
      eprint={2308.14268},
      archivePrefix={arXiv},
      primaryClass={math.AG},
      note = {\href{https://arxiv.org/abs/2308.14268}{arXiv:2308.14268},
        to appear in Algebraic Geometry}
}

@article {AlexeevLiu19accpoint,
    AUTHOR = {Alekseev, V. A. and Liu, V.},
     TITLE = {On accumulation points of volumes of log surfaces},
   JOURNAL = {Izv. Ross. Akad. Nauk Ser. Mat.},
  FJOURNAL = {Izvestiya Rossiiskoi Akademii Nauk. Seriya Matematicheskaya},
    VOLUME = {83},
      YEAR = {2019},
    NUMBER = {4},
     PAGES = {5--25},
      ISSN = {1607-0046,2587-5906},
   MRCLASS = {14J29 (14J26 14R05)},
  MRNUMBER = {3985688},
MRREVIEWER = {Kamil\ Rusek},
       DOI = {10.4213/im8842},
       URL = {https://doi.org/10.4213/im8842},
}

@book {Debarre01,
    AUTHOR = {Debarre, Olivier},
     TITLE = {Higher-dimensional algebraic geometry},
    SERIES = {Universitext},
 PUBLISHER = {Springer-Verlag, New York},
      YEAR = {2001},
     PAGES = {xiv+233},
      ISBN = {0-387-95227-6},
   MRCLASS = {14-02 (14E30 14Jxx)},
  MRNUMBER = {1841091},
MRREVIEWER = {Mark\ Gross},
       DOI = {10.1007/978-1-4757-5406-3},
       URL = {https://doi.org/10.1007/978-1-4757-5406-3},
}

@incollection {Nakayama17toric,
    AUTHOR = {Nakayama, Noboru},
     TITLE = {A variant of {S}hokurov's criterion of toric surface},
 BOOKTITLE = {Algebraic varieties and automorphism groups},
    SERIES = {Adv. Stud. Pure Math.},
    VOLUME = {75},
     PAGES = {287--392},
 PUBLISHER = {Math. Soc. Japan, Tokyo},
      YEAR = {2017},
      ISBN = {978-4-86497-048-8},
   MRCLASS = {14J25 (14J26 14M25)},
  MRNUMBER = {3793369},
MRREVIEWER = {Marcel\ Morales},
       DOI = {10.2969/aspm/07510287},
       URL = {https://doi.org/10.2969/aspm/07510287},
}

@misc{liuliu26minimalvolumestablesurfaces,
  author = {Jihao Liu and Wenfei Liu},
  title  = {The minimal volume of stable surfaces of rank one},
  year   = {2026},
  note = {\href{https://arxiv.org/abs/2605.05641}{arXiv:2605.05641}},
}

@book {Kollar92Utah,
    author    = {Koll{\'a}r, J{\'a}nos and others},
     TITLE = {Flips and abundance for algebraic threefolds},
      NOTE = {Papers from the Second Summer Seminar on Algebraic Geometry
              held at the University of Utah, Salt Lake City, Utah, August
              1991,
              Ast\'{e}risque No. 211 (1992)},
 PUBLISHER = {Soci\'{e}t\'{e} Math\'{e}matique de France, Paris},
      YEAR = {1992},
     PAGES = {1--258},
      ISSN = {0303-1179,2492-5926},
   MRCLASS = {14E30 (14E35 14M10)},
  MRNUMBER = {1225842},
MRREVIEWER = {Mark\ Gross},
}

@article{Kuwata99lct-red-curve,
  author     = {Kuwata, Takayasu},
  title      = {On log canonical thresholds of reducible plane curves},
  journal    = {Amer. J. Math.},
  fjournal   = {American Journal of Mathematics},
  volume     = {121},
  year       = {1999},
  number     = {4},
  pages      = {701--721},
  issn       = {0002-9327, 1080-6377},
  mrclass    = {14H20 (14B05 14E30)},
}

@article {Prokhorov02lct-II,
    AUTHOR = {Prokhorov, Yu. G.},
     TITLE = {On log canonical thresholds. {II}},
   JOURNAL = {Comm. Algebra},
  FJOURNAL = {Communications in Algebra},
    VOLUME = {30},
      YEAR = {2002},
    NUMBER = {12},
     PAGES = {5809--5823},
      ISSN = {0092-7872,1532-4125},
   MRCLASS = {14E30 (14J30)},
  MRNUMBER = {1941925},
MRREVIEWER = {Jaros\l aw\ A.\ Wi\'sniewski},
       DOI = {10.1081/AGB-120016015},
       URL = {https://doi.org/10.1081/AGB-120016015},
}

@misc{LiuShokurov2023optimal,
      title={Optimal bounds on surfaces}, 
      author={Jihao Liu and V. V. Shokurov},
      JOURNAL = {to appear in Algebraic Geometry and Physics},
      year={2023},
      eprint={2305.19248},
      archivePrefix={arXiv},
      primaryClass={math.AG},
      note = {\href{https://arxiv.org/abs/2305.19248}{arXiv:2305.19248}, to appear in Algebraic Geometry and Physics}
}

@book {KM98,
    AUTHOR = {Koll\'{a}r, J\'{a}nos and Mori, Shigefumi},
     TITLE = {Birational geometry of algebraic varieties},
    SERIES = {Cambridge Tracts in Mathematics},
    VOLUME = {134},
      NOTE = {With the collaboration of C. H. Clemens and A. Corti,
              Translated from the 1998 Japanese original},
 PUBLISHER = {Cambridge University Press, Cambridge},
      YEAR = {1998},
     PAGES = {viii+254},
      ISBN = {0-521-63277-3},
   MRCLASS = {14E30},
  MRNUMBER = {1658959},
MRREVIEWER = {Mark\ Gross},
       DOI = {10.1017/CBO9780511662560},
       URL = {https://doi.org/10.1017/CBO9780511662560},
}

@article {AlexeevLiu19opensurfaces,
    AUTHOR = {Alexeev, Valery and Liu, Wenfei},
     TITLE = {Open surfaces of small volume},
   JOURNAL = {Algebr. Geom.},
  FJOURNAL = {Algebraic Geometry},
    VOLUME = {6},
      YEAR = {2019},
    NUMBER = {3},
     PAGES = {312--327},
      ISSN = {2313-1691,2214-2584},
   MRCLASS = {14J29},
  MRNUMBER = {3938621},
MRREVIEWER = {Lei\ Zhang},
       DOI = {10.14231/AG-2019-015},
       URL = {https://doi.org/10.14231/AG-2019-015},
}

@article {Totaro24minimalvolume,
    AUTHOR = {Totaro, Burt},
     TITLE = {klt varieties with conjecturally minimal volume},
   JOURNAL = {Int. Math. Res. Not. IMRN},
  FJOURNAL = {International Mathematics Research Notices. IMRN},
      YEAR = {2024},
    NUMBER = {1},
     PAGES = {462--491},
      ISSN = {1073-7928,1687-0247},
   MRCLASS = {14C20 (14E30 14J45)},
  MRNUMBER = {4686657},
       DOI = {10.1093/imrn/rnad047},
       URL = {https://doi.org/10.1093/imrn/rnad047},
}

@book {CoxLittleSchenck11Toricvarieties,
    AUTHOR = {Cox, David A. and Little, John B. and Schenck, Henry K.},
     TITLE = {Toric varieties},
    SERIES = {Graduate Studies in Mathematics},
    VOLUME = {124},
 PUBLISHER = {American Mathematical Society, Providence, RI},
      YEAR = {2011},
     PAGES = {xxiv+841},
      ISBN = {978-0-8218-4819-7},
   MRCLASS = {14M25 (05A15 05E45 52B12)},
  MRNUMBER = {2810322},
MRREVIEWER = {Ivan\ Arzhantsev},
       DOI = {10.1090/gsm/124},
       URL = {https://doi.org/10.1090/gsm/124},
}

@article{alexeevliu19picardone,
  author = {Alexeev, Valery and Liu, Wenfei},
  title = {Log surfaces of {P}icard rank one from four lines in the plane},
  journal = {Eur. J. Math.},
  fjournal = {European Journal of Mathematics},
  volume = {5},
  year = {2019},
  number = {3},
  pages = {622--639},
  issn = {2199-675X,2199-6768},
  mrclass = {14J29 (14J26 14R05)},
  mrnumber = {3993254},
  mrreviewer = {Francesco\ Polizzi},
  doi = {10.1007/s40879-019-00347-2},
  url = {https://doi.org/10.1007/s40879-019-00347-2},
}

@article {Liu26positivegenus,
    AUTHOR = {Liu, Wenfei},
     TITLE = {The minimal volume of log canonical surfaces of general type
              with positive geometric genus},
   JOURNAL = {Israel J. Math.},
  FJOURNAL = {Israel Journal of Mathematics},
    VOLUME = {272},
      YEAR = {2026},
    NUMBER = {1},
     PAGES = {231--290},
      ISSN = {0021-2172,1565-8511},
   MRCLASS = {14J29},
  MRNUMBER = {5014058},
       DOI = {10.1007/s11856-025-2815-1},
       URL = {https://doi.org/10.1007/s11856-025-2815-1},
}

@book {Miyanishi94,
    AUTHOR = {Miyanishi, Masayoshi},
     TITLE = {Algebraic geometry},
    SERIES = {Translations of Mathematical Monographs},
    VOLUME = {136},
      NOTE = {Translated from the 1990 Japanese original by the author},
 PUBLISHER = {American Mathematical Society, Providence, RI},
      YEAR = {1994},
     PAGES = {xii+246},
      ISBN = {0-8218-4615-9},
   MRCLASS = {14-01 (13-01 14Exx 14Jxx)},
  MRNUMBER = {1284715},
MRREVIEWER = {Jaros\l aw\ A.\ Wi\'sniewski},
       DOI = {10.1090/mmono/136},
       URL = {https://doi.org/10.1090/mmono/136},
}

@misc{alexeevliuschutt2025modulispacestablesurfaces,
  title = {On the moduli space of stable surfaces with $p_g=1$ realizing the minimal volume},
  author = {Valery Alexeev and Wenfei Liu and Matthias Schütt},
  year = {2025},
  eprint = {2510.17678},
  archiveprefix = {arXiv},
  primaryclass = {math.AG},
  url = {https://arxiv.org/abs/2510.17678},
  note = {\href{https://arxiv.org/abs/2510.17678}{arXiv:2510.17678}}
}

\end{document}